\newcommand{\abssec}[1]{\noindent\normalsize {\bfseries #1\quad }\ignorespaces}
\renewenvironment{abstract}{\abssec{Abstract}}{\par\vspace{.1in}}
\newenvironment{keywords}{\abssec{Key Words}}{\par\vspace{.1in}}
\newenvironment{AMS}{\abssec{AMS subject
		classification}}{\par\vspace{.1in}}
\numberwithin{equation}{section}
\theoremstyle{plain}
\newtheorem{theorem}{Theorem}[section]
\newtheorem{lemma}[theorem]{Lemma}
\theoremstyle{definition}
\newtheorem{remark}[theorem]{Remark}
\newtheorem{example}[theorem]{Example}
\newtheorem{definition}[theorem]{Definition}
\newtheorem{assumption}[theorem]{Assumption}
\DeclareMathOperator{\proj}{Proj}
\newcommand{\reg}{\mathrm{r}}
\newcommand{\sing}{\mathrm{s}}
\title{Error estimates for Dirichlet control problems in polygonal domains
\thanks{The project was supported by DFG through the International Research Training Group IGDK 1754 \emph{Optimization and Numerical Analysis for Partial Differential Equations with Nonsmooth Structures}.
The second author was partially supported by the Spanish Ministerio Espa\~nol de Econom\'{\i}a y Competitividad under research project MTM2014-57531-P}}
\author{Thomas Apel \thanks{Institut f\"ur Mathematik und Bauinformatik, Universit\"at der Bundeswehr M\"unchen,85577 Neubiberg, Germany. \texttt{Thomas.Apel@unibw.de}}
\and
Mariano Mateos \thanks{Departamento de Matem\'{a}ticas, Universidad de Oviedo, 33203, Gij\'on, Spain. \texttt{mmateos@uniovi.es}}
\and
Johannes Pfefferer \thanks{Lehrstuhl f\"ur Optimalsteuerung, Technische Universit\"at M\"unchen, 85748 Garching bei M\"unchen, Germany. \texttt{pfefferer@ma.tum.de}}
\and
Arnd R\"osch \thanks{Fakult\"at f\"ur Mathematik, Universt\"at Duisburg-Essen, 45127 Essen, Germany. \texttt{Arnd.Roesch@uni-due.de}}
}
\begin{document}

\maketitle

\begin{abstract}
	The paper deals with finite element approximations of elliptic Dirichlet boundary control problems posed on two-dimensional polygonal domains. Error estimates are derived for the approximation of the control and the state variables. Special features of unconstrained and control constrained problems as well as general quasi-uniform meshes and superconvergence meshes are carefully elaborated. Compared to existing results, the convergence rates for the control variable are not only improved but also fully explain the observed orders of convergence in the literature. Moreover, for the first time, results in non-convex domains are provided.
\end{abstract}

\begin{AMS}
	65N30, 65N15, 49M05, 49M25
\end{AMS}

\begin{keywords}
	optimal control,  boundary control, Dirichlet control, nonconvex domain, finite elements, error estimates, superconvergence meshes
\end{keywords}

\section{Introduction}
\label{S1}

In this paper we will study the finite element approximation of the control problem
\[
\mbox{(P)}\left\{\begin{array}{l} \min
J(u)={\frac{1}{2}\|Su-y_\Omega\|_{L^2(\Omega)}^2
 + \frac{\nu}{2}\|u\|_{L^2(\Gamma)}^2}\\[1ex]
    \mbox{subject to} \ \
    (Su,u)\in H^{1/2}(\Omega)\times
    L^2(\Gamma),\\[1ex]
    u\in U_{ad}=\{u\in L^2(\Gamma):\ a\leq u(x)\leq b \ \ \mbox{ for a.a. } \ x\in \Gamma\},
    \end{array}\right.
\]
where $Su$ is the very weak solution $y$ of the
state equation
\begin{equation}
 -\Delta y   =  0  \mbox{ in }\Omega,\
 y = u  \mbox{ on } \Gamma,
\label{E1.1}
\end{equation}
the domain $\Omega\subset\mathbb{R}^2$ is bounded and polygonal,
$\Gamma$ is its boundary, $a<b$ and $\nu>0$ are real constants,
 and $y_\Omega$
is a function whose precise regularity will be stated when necessary. We assume
that $0\in[a,b]$ and comment on the opposite case in {Remark} \ref{sec:rem_ext}.
{Abusing notation, we will allow the case $a=-\infty$ and $b=+\infty$ to
denote the absence of one or both of the control constraints.}

First order optimality conditions read as (see \cite[Lemma 3.1]{AMPR-2015})
\begin{lemma}\label{T3.1}Suppose $y_\Omega\in L^2(\Omega)$. Then problem (P) has a unique solution $\bar u\in L^2(\Gamma)$ with related state $\bar y\in H^{1/2}(\Omega)$ and adjoint state $\bar\varphi\in H^1_0(\Omega)$. The following optimality system is satisfied:
\begin{subequations}
\begin{align}
(\nu\bar u-\partial_n\bar\varphi,u-\bar u)_{L^2(\Gamma)}& \geq 0\ \forall u\in U_{ad}
\label{E3.1}\\
-\Delta \bar y = 0\mbox{ in }\Omega,\; \bar y=\bar u&\mbox{ on }\Gamma, {\text{ in the very weak sense,}}
\label{E3.2}\\
-\Delta\bar \varphi = \bar y-y_\Omega \mbox{ in }\Omega,\; \bar \varphi=0&\mbox{ on }\Gamma,  {\text{ in the weak sense}}.
\label{E3.3}
\end{align}
\end{subequations}
\end{lemma}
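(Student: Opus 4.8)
The argument I would give proceeds in three stages: well-posedness of the control-to-state map, existence and uniqueness of the minimizer, and derivation of the optimality system. First I would fix the \emph{control-to-state operator}. For $u\in L^2(\Gamma)$ the state equation \eqref{E1.1} is read in the transposition (very weak) sense: $y=Su$ is the unique $L^2(\Omega)$-function with $\int_\Omega y\,(-\Delta w)\,dx=-\int_\Gamma u\,\partial_n w\,ds$ for all $w$ in a suitable test space. Via the Lax--Milgram theorem applied to the adjoint problems one checks that this defines a linear continuous operator $S\colon L^2(\Gamma)\to L^2(\Omega)$, and for polygonal $\Omega$ one has the sharper mapping property $S\colon L^2(\Gamma)\to H^{1/2}(\Omega)$ with $\|Su\|_{H^{1/2}(\Omega)}\le C\|u\|_{L^2(\Gamma)}$, because the exponent $1/2$ lies below the range in which the corners of $\Omega$ obstruct regularity. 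I would also record that the Hilbert-space adjoint $S^\ast\colon L^2(\Omega)\to L^2(\Gamma)$ acts by $S^\ast g=-\partial_n\varphi_g$, where $\varphi_g\in H^1_0(\Omega)$ solves $-\Delta\varphi_g=g$; this identity is the duality/Green formula that produces \eqref{E3.1} and is the only genuinely technical point, addressed at the end.

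Second, existence and uniqueness follow from the direct method. The set $U_{ad}$ is nonempty (it contains $u\equiv 0$ since $0\in[a,b]$), convex, and closed in $L^2(\Gamma)$; the functional $J$ is continuous, strictly convex, and, owing to the Tikhonov term $\tfrac{\nu}{2}\|u\|_{L^2(\Gamma)}^2$ with $\nu>0$, radially unbounded on all of $L^2(\Gamma)$ (so no boundedness of $U_{ad}$ is needed, which also covers the cases $a=-\infty$, $b=+\infty$). Taking a minimizing sequence, extracting a weakly convergent subsequence by reflexivity of $L^2(\Gamma)$, and passing to the limit using weak lower semicontinuity of $J$ (the tracking term is even weakly continuous, since $S\colon L^2(\Gamma)\to L^2(\Omega)$ is compact by the Rellich embedding $H^{1/2}(\Omega)\hookrightarrow L^2(\Omega)$) yields a minimizer $\bar u$; strict convexity gives uniqueness. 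Then $\bar y=S\bar u\in H^{1/2}(\Omega)$ and, since $\bar y-y_\Omega\in L^2(\Omega)\subset H^{-1}(\Omega)$, the adjoint $\bar\varphi\in H^1_0(\Omega)$ with $-\Delta\bar\varphi=\bar y-y_\Omega$ exists uniquely by Lax--Milgram.

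Third, I would derive the optimality system. The map $J$ is Fréchet differentiable with $J'(\bar u)v=(S\bar u-y_\Omega,Sv)_{L^2(\Omega)}+\nu(\bar u,v)_{L^2(\Gamma)}$, and since (P) is a convex problem the variational inequality $J'(\bar u)(u-\bar u)\ge 0$ for all $u\in U_{ad}$ is both necessary and sufficient for optimality. With $\bar\varphi$ defined as in \eqref{E3.3}, the key rewriting $(S\bar u-y_\Omega,Sv)_{L^2(\Omega)}=(S^\ast(\bar y-y_\Omega),v)_{L^2(\Gamma)}=-(\partial_n\bar\varphi,v)_{L^2(\Gamma)}$ turns the variational inequality into \eqref{E3.1}, while \eqref{E3.2} and \eqref{E3.3} are just the definitions of $\bar y$ and $\bar\varphi$.

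The step I expect to require real care is the Green-type identity $\int_\Omega(\bar y-y_\Omega)\,Sv\,dx=-\int_\Gamma\partial_n\bar\varphi\,v\,ds$ in a possibly \emph{non-convex} polygon. The naive choice $w=\bar\varphi$ in the transposition formula is inadmissible because $\bar\varphi\notin H^2(\Omega)$ when $\Omega$ has a reentrant corner. To handle this I would invoke elliptic regularity in polygonal domains: since every interior angle is strictly less than $2\pi$, there is $s\in(\tfrac12,1]$ with $\bar\varphi\in H^{1+s}(\Omega)$, so the conormal derivative $\partial_n\bar\varphi$ is a well-defined element of $L^2(\Gamma)$; one then extends the transposition identity by density to test functions in $\{w\in H^1_0(\Omega):\Delta w\in L^2(\Omega),\ \partial_n w\in L^2(\Gamma)\}$, a class containing $\bar\varphi$, and applies it with $w=\bar\varphi$. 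Everything else is routine, and the details coincide with \cite[Lemma 3.1]{AMPR-2015}.
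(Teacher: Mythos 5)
Your proposal is correct; note that the paper itself offers no proof of this lemma but simply cites \cite[Lemma 3.1]{AMPR-2015}, and your argument (transposition definition of $S$, direct method with strict convexity from the Tikhonov term, and the identification $S^{\ast}g=-\partial_n\varphi_g$) is precisely the standard route taken there. You also correctly isolate the one genuinely delicate point — that in a non-convex polygon $\bar\varphi\notin H^2(\Omega)$, so one needs $\bar\varphi\in H^{1+s}(\Omega)$ with $s>\tfrac12$ (available since every $\lambda_j=\pi/\omega_j>\tfrac12$) together with a density extension of the Green formula to test functions $w\in H^1_0(\Omega)$ with $\Delta w\in L^2(\Omega)$ and $\partial_n w\in L^2(\Gamma)$, which is the same Costabel-type integration by parts the paper invokes later in the proof of Theorem \ref{T3.2}.
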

The variational inequality \eqref{E3.1} is equivalent to
\begin{equation}\label{eq:proj}\bar u(x) =
\proj_{[a,b]}\left(\frac{1}{\nu}\partial_n\bar\varphi(x)\right)\mbox{ for a.e.
}x\in \Gamma,\end{equation}
where $\proj_{[a,b]}$ denotes the pointwise projection on the interval
$[a,b]$.

The aim of this paper is to investigate a finite element solution
  of the system \eqref{E3.1}--\eqref{E3.3}, in particular to derive
  discretization error estimates. The precise description of the
  regularity of the solution of the first order optimality system is
  an important ingredient of such estimates. They were proven in our
  previous paper \cite{AMPR-2015}; we recall these results in Section
  \ref{sec:2}. There were two interesting observations which we may illustrate
  in the following example.
\begin{example}
  Consider the L-shaped domain. The $270^\circ$ angle leads in
    general to a singularity of type $r^{2/3}$ in the solution of the
    adjoint equation; the regularity can be characterized by
    $\bar\varphi\in H^s(\Omega)$ with $s<\frac23$.  Hence, the control
    has a $r^{-1/3}$-singularity in the unconstrained case, $\bar u\in
    H^s(\Gamma)$ for all $s<\frac16$.
    In the constrained
    case, however, the control is in general constant in the vicinity
    of the singular corner since the normal derivative of the adjoint state has a pole there,
    we get $\bar u\in H^s(\Gamma)$ for all $s<\frac32$. This regularity is determined by the larges convex angle
    and by the kinks due to the constraints.

    Unfortunately, this is not the whole truth. In exceptional
    cases, e.\,g.\ when the data enjoy certain symmetry, the leading
    singularity of type $r^{2/3}$ may not appear in the adjoint state.
    Instead, the solution may have a $r^{4/3}$-singularity whose
    normal derivative has a $r^{1/3}$-singularity which is not
    flattened by the projection $\proj_{[a,b]}$. The control is less
    regular, $\bar u\in H^s(\Gamma)$ for
    all $s<\frac56$. See Example 3.6 in \cite{AMPR-2015}.
\end{example}
Hence, dealing with these exceptional cases is not fun but necessary. If in the unconstrained case a stress intensity factor vanishes
, i.e., the leading singularity does not occur, then the convergence result is still true, one may only see a better convergence in numerical tests. See Figure \ref{F:unc_exp}, right hand side and Remark \ref{R-unc}. However, in the constrained case, the situation is the opposite. The exceptional case leads to the worst-case estimate. To deal with the ``worst-case'' and the ``usual-case'' in an unified way, we introduce in \eqref{MME7} some numbers related to the singular exponents.

We distinguish two cases for the investigation of the
  discretization errors. After proving a general result in Section
  \ref{sec:3} we study the unconstrained case in Section \ref{sec:4}
  and the constrained case in Section \ref{sec:5}. We focus on
  quasi-uniform meshes and distinguish general meshes and
  certain superconvergence meshes.
  In order not to overload the
  present paper, we postpone the study of graded meshes to \cite{AMPR-2017}.
   The numerical
  tests
  in Section \ref{sec:num_exp} confirm the theoretical results.

The study of error estimates for Dirichlet control problems posed on
polygonal domains can be traced back to \cite{Casas-Raymond2006}, where a
control constrained problem governed by a semilinear elliptic equation posed in
a convex polygonal domain is studied. An order of convergence of $h^{s}$ is
proved for all
$s<\min(1,\pi/(2\omega_1))$, where $\omega_1$ is the largest interior
angle, in both the control and the state variable. Later, in
\cite{MayRannacherVexler2013}, it is proven that for unconstrained linear
problems posed on convex domains, the state variable exhibits a better
convergence property.
The corresponding proof is based on a duality
argument and estimates for the controls in weaker norms than $L^2(\Gamma)$.
However, to the best of our knowledge, the argumentation is restricted to
unconstrained problems. For the error of the controls in $L^2(\Gamma)$,
the order shown in
\cite{Casas-Raymond2006} is not improved.

Nevertheless, the regularity of the control and the
existing numerical experiments, see
\cite{MayRannacherVexler2013,Mateos-Neitzel2015}, suggested that for the control
variable the order should be greater:
$h^{s}$ for all $s<\min(1,\pi/\omega_1-1/2)$ if one uses standard quasi-uniform meshes, and for all $s<\min(3/2,\pi/\omega_1-1/2)$ if one uses certain quasi-uniform meshes which allow for superconvergence effects, see Definition \ref{def:superconvergence}. Our
main results, Theorems \ref{main:unc} and \ref{T5main}, fully explain
the observed orders of convergence in the literature for the control variable,
improve existing results for the state variable in constrained
linear-quadratic problems posed in convex domains, and provide the first available results in nonconvex domains.

\section{Notation and regularity results}\label{sec:2}

Let us denote by $M$ the number of sides of $\Gamma$ and $\{x_j\}_{j=1}^M$ its vertexes, ordered counterclockwise. For convenience denote also $x_0=x_M$ and $x_{M+1}=x_1$. We will denote by $\Gamma_j$ the side of $\Gamma$ connecting $x_{j}$ and $x_{j+1}$, and by $\omega_j\in (0,2\pi)$ the angle interior to $\Omega$ at $x_j$, i.e., the angle defined by $\Gamma_{j}$ and $\Gamma_{j-1}$, measured counterclockwise. Notice that  $\Gamma_{0}=\Gamma_M$. We will use $(r_j,\theta_j)$ as local polar coordinates at $x_j$, with $r_j=|x-x_j|$ and $\theta_j$ the angle defined by $\Gamma_j$ and the segment $[x_j,x]$.
In order to describe the regularity of the functions near the corners, we will introduce for every $j=1,\ldots,M$  a positive number $R_j$ and an  infinitely differentiable cut-off function $\xi_j:\mathbb{R}^2\to[0,1]$ such that the sets
\[N_j=\{x\in\mathbb{R}^2:0<r_j<2R_j,\,0<\theta_j<\omega_j\},\]
satisfy $N_j\subset\Omega$ for all $j$ and $N_i\cap N_j=\emptyset$ if $i\neq j$ and $\xi_j\equiv 1$ in the set $\{x\in\mathbb{R}^2:r_j<R_j\}$, $\xi_j\equiv 0$ in the set $\{x\in\mathbb{R}^2:r_j>2R_j\}$.

For every $j=1,\ldots,M$ we will call $\lambda_j$ the \emph{in general} leading singular exponent associated with the operator corresponding to the corner $x_j$. For the Laplace operator it is well known that $\lambda_j=\pi/\omega_j$.
Since in general the regularity of the solution of a boundary value problem
depends on the smallest singular exponent, it is customary to denote
\begin{equation}\label{eq:lambda1}
{\lambda} =\min\{\lambda_j:\
j=1,\ldots,M\}
\mbox{ and }p_D =\frac{2}{1-\min\{1,
{\lambda}\}}.\end{equation}

Our main estimates are for data $y_\Omega\in W^{1,p^*}(\Omega)$ for some $p^*>2$. To get these estimates it is key to use the sharp regularity results of the optimal control, state and adjoint state provided in \cite{AMPR-2015}. For both the control and the state it is enough to know the Hilbert Sobolev-Slobodetski\u\i{} space they belong to, but for the adjoint state we will need to know with some more detail the development in terms of powers of the singular exponents. To write this development, we must proceed in two steps in order to be able to define the \emph{effectively} leading singularity in each corner.

Our first result concerns the regularity of the adjoint state and is a consequence of \cite[Theorem 3.2 and Theorem 5.1]{AMPR-2015}.
For $m\in \mathbb{Z}$, ${t\in\mathbb{R}}$ and $1< p \leq +\infty$
we define
\begin{equation}\label{E2.3}
{\mathbb{J}^m_{t,p}=\left\{j\in\{1,\ldots,M\}\mbox{ such that }
0<m\lambda_j < 2+t-\frac{2}{p}\mbox{ and } m\lambda_j\notin\mathbb{Z}\right\}}.\end{equation}

\begin{lemma}\label{L3.1}Suppose $y_\Omega\in L^{\infty}(\Omega)$. Let
$\bar\varphi\in H^1_0(\Omega)$ be the optimal adjoint state, solution of
\eqref{E3.3}. Then, there exist a unique function $\bar\varphi_{\reg}\in
W^{2,p}(\Omega)$ and unique real numbers $( c_{j,m})_{j\in
{\mathbb{J}_{0,p}^m}}$,  for all $p<+\infty$ for constrained problems and
$p<p_D$ for unconstrained problems, such that
\begin{equation}\label{E3.5}\bar\varphi = \bar\varphi_{\reg} +\sum_{m=1}^3\sum_{j\in{\mathbb{J}_{0,p}^m}} c_{j,m}\xi_j r_j^{m\lambda_j}\sin(m\lambda_j\theta_j).\end{equation}
\end{lemma}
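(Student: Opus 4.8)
The plan is to derive the decomposition \eqref{E3.5} by a bootstrap argument that combines the known regularity of $\bar\varphi$ with repeated application of the elliptic corner regularity theory for the Laplacian (Kondrat'ev/Grisvard-type expansions). First I would record the starting point: by Lemma~\ref{T3.1} we have $\bar\varphi\in H^1_0(\Omega)$ and, since $\bar y\in H^{1/2}(\Omega)$ is the very weak solution with $L^\infty$ (hence $L^2$) boundary data, $\bar y\in L^2(\Omega)$, so the right-hand side $f:=\bar y-y_\Omega$ of \eqref{E3.3} lies in $L^2(\Omega)$. In the constrained case one has in addition the better bound $\bar u\in L^\infty(\Gamma)$ coming from the projection formula \eqref{eq:proj} together with $\partial_n\bar\varphi$ being bounded away from the corners, which by the maximum principle / very weak theory gives $\bar y\in L^\infty(\Omega)$; hence $f\in L^\infty(\Omega)$ and one can push the integrability exponent $p$ all the way up to $p<+\infty$. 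In the unconstrained case $\bar u$ inherits only the limited regularity dictated by $\partial_n\bar\varphi$, which caps $p$ at $p_D$ (via $\bar u\in W^{1-1/p,p}(\Gamma)$ and the corresponding $W^{1,p}$-regularity of $\bar y$). This dichotomy in the admissible range of $p$ is exactly the one stated, and I would dispatch it first, citing \cite[Theorem 3.2, Theorem 5.1]{AMPR-2015}.

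The core step is the singular expansion itself. Applying the standard $L^p$ corner-asymptotics for $-\Delta$ with homogeneous Dirichlet condition on the polygon (Grisvard, Dauge, Maz'ya--Rossmann) to the equation $-\Delta\bar\varphi=f$, $\bar\varphi|_\Gamma=0$, one obtains at each vertex $x_j$ an expansion whose singular functions are precisely $r_j^{m\lambda_j}\sin(m\lambda_j\theta_j)$ for those positive integers $m$ with $m\lambda_j$ below the regularity threshold $2-2/p$ and $m\lambda_j\notin\mathbb N$ — i.e.\ exactly the index set $\mathbb J^m_{0,p}$ of \eqref{E2.3} with $t=0$. The cut-offs $\xi_j$, whose supports are disjoint and contained in $\Omega$ by construction in Section~\ref{sec:2}, localize these singular functions, and the remainder after subtracting $\sum_{m,j}c_{j,m}\xi_j r_j^{m\lambda_j}\sin(m\lambda_j\theta_j)$ solves a Dirichlet problem with an $L^p$ right-hand side that is now regular enough at every corner to belong to $W^{2,p}(\Omega)$; this remainder is $\bar\varphi_{\reg}$. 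The fact that the sum over $m$ stops at $m=3$ is a consequence of the cap $2-2/p<2$ together with $\lambda_j=\pi/\omega_j>1/2$ for all $j$ (since $\omega_j<2\pi$), so $4\lambda_j>2>2-2/p$ always, and $m\lambda_j<2-2/p$ can hold only for $m\le 3$. I would also note that the linearity of the Dirichlet map makes the singular part of $\bar\varphi$ depend only on $f$, and the coefficients $c_{j,m}$ are the usual stress-intensity functionals applied to $f$, which gives both their existence and uniqueness; uniqueness of $\bar\varphi_{\reg}$ then follows since the singular monomials are linearly independent modulo $W^{2,p}$.

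The main obstacle is not the corner expansion per se — that is off-the-shelf — but the coupling: $f=\bar y-y_\Omega$ depends on $\bar y$, which in turn depends through the state equation on $\bar u=\proj_{[a,b]}(\nu^{-1}\partial_n\bar\varphi)$, i.e.\ on the trace of the normal derivative of $\bar\varphi$ itself. Thus the regularity of $f$ is a priori only as good as that of $\bar\varphi$, and one must run a fixed-point/bootstrap loop: start with $\bar y\in L^2$, get $\bar\varphi\in H^{1+s}$ for $s<\lambda$, deduce a first level of regularity for $\bar u$ and hence for $\bar y$, feed it back, and iterate until the integrability exponent saturates at $p<+\infty$ (constrained) or at $p<p_D$ (unconstrained). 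The delicate point is showing that the projection $\proj_{[a,b]}$ does not destroy $W^{1,p}$-type regularity of the control away from the corners and that at the corners the constrained case genuinely gains integrability because the pole of $\partial_n\bar\varphi$ is flattened to a constant by the projection — this is where the two separate ranges of $p$ come from and is the part I would treat most carefully, appealing to the sharp statements in \cite{AMPR-2015} rather than reproving them.
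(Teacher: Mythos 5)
Your proposal is consistent with the paper, which gives no proof of this lemma at all but states it as a direct consequence of \cite[Theorems 3.2 and 5.1]{AMPR-2015} --- exactly the results you defer to for the delicate bootstrap and for the two admissible ranges of $p$. Your reconstruction of the underlying Kondrat'ev/Grisvard argument is sound (including why the sum stops at $m=3$: $\lambda_j>\tfrac12$ and the threshold $2-2/p<2$); the only imprecision is that the cap $p<p_D$ in the unconstrained case stems from the limited $L^p$-integrability of $\bar y\sim r_j^{\lambda_j-1}$ as part of the right-hand side of the adjoint equation, rather than from $W^{1,p}$-regularity of $\bar y$.
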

Note that $p_D=+\infty$ in convex domains such that we obtain for
constrained as well as for unconstrained problems the same regularity of the
optimal adjoint state. However, in non-convex domains, the control and hence
the state, as part of the right hand side of the adjoint equation, may be
unbounded in the unconstrained case, which leads to the restriction $p<p_D$ for
the regularity of $\bar{\varphi}_{\reg}$.
Moreover, it may
happen that the
effectively leading singularity corresponding to a corner $x_j$ is not the
first one. This means that the associated coefficient $c_{j,1}$ in the
asymptotic representation \eqref{E3.5} is equal to zero. However, this will be
of interest only for constrained problems in case of nonconvex corners $x_j$,
i.e., $\lambda_j<1$.
To be able to cover this, we define the numbers
\begin{equation}\label{MME7}
	\Lambda_j = \left\{\begin{array}{cl}
	\lambda_j&\mbox{ if }\lambda_j>1\mbox{ or }c_{j,1}\neq 0\\
	2\lambda_j&\mbox{ if }\lambda_j < 1\mbox{ and }c_{j,1}=0
	\end{array}\right.
\end{equation}
for each corner. In addition, we introduce
\begin{align}
\label{eq:Lambda} {\Lambda}&={\min\{\Lambda_j: \Lambda_j>1, \ j=1,\ldots,M\}}.
\end{align}
In convex domains,
$\lambda=\Lambda$ will determine the regularity
of both the optimal control and state. This holds for unconstrained as
well as for constrained problems. However, in nonconvex domains, different
cases may appear. If we have no control constraints then the regularity of the
optimal control and state will again be determined by $\lambda$. If the problem
is constrained then in the vicinity of any corner $x_j$, where the coefficient
of the corresponding first singularity $c_{j,1}$ is unequal to zero,
the optimal control is flattened there due to the projection formula and
consequently smooth. This is the usual case. If $c_{j,1}=0$ then the optimal
control in the neighborhood of such a corner is at least as regular as the
normal derivative of the corresponding second singular function.
In the control constrained case,
$\Lambda$
will determine the regularity of the
optimal control, at least in a worst case sense. The regularity of the optimal state may depend on $\lambda$
as well since singular terms may occur within its asymptotic representation
independent of the adjoint state.

For unconstrained problems the following regularity result holds, see
\cite[Corollary 5.3, Corollary 4.2, Theorem 3.4]{AMPR-2015}.
\begin{lemma}[unconstrained case]\label{L2.3}Suppose $-a=b=\infty$ and
$y_\Omega\in H^{t}(\Omega)
{\cap L^2(\Omega)}$ for all $t<
{\min\{1,\lambda-1\}}$. Then
\begin{align}
\bar u\in H^{s}(\Gamma),  \quad \bar y\in H^{s+\tfrac12}(\Omega)\quad\forall
s<\min\{\tfrac32,\lambda-\tfrac12\}
\end{align}
\end{lemma}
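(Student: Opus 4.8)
The plan is to derive the regularity of $\bar u$ and $\bar y$ from the asymptotic decomposition of the adjoint state in Lemma \ref{L3.1} together with the projection formula \eqref{eq:proj}, which in the unconstrained case $-a=b=\infty$ reduces to $\bar u = \tfrac1\nu\partial_n\bar\varphi$ on $\Gamma$. First I would fix $s<\min\{\tfrac32,\lambda-\tfrac12\}$ and, using the hypothesis $y_\Omega\in H^t(\Omega)\cap L^2(\Omega)$ for all $t<\min\{1,\lambda-1\}$, invoke Lemma \ref{L3.1} (with the unconstrained range $p<p_D$) to write $\bar\varphi = \bar\varphi_{\reg} + \sum_{m=1}^3\sum_{j\in\mathbb{J}^m_{0,p}} c_{j,m}\,\xi_j r_j^{m\lambda_j}\sin(m\lambda_j\theta_j)$ with $\bar\varphi_{\reg}\in W^{2,p}(\Omega)$. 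Choosing $p$ close enough to $p_D$ — equivalently such that $2-2/p$ is close to $\min\{1,\lambda\}+1$ — the trace theorem gives $\partial_n\bar\varphi_{\reg}|_\Gamma \in W^{1-1/p,p}(\Gamma)$, and an embedding of this Besov-type trace space into $H^{s}(\Gamma)$ handles the regular part for every admissible $s$; this is where the two endpoints $\tfrac32$ (from the $W^{2,p}$ regularity of the regular part) and $\lambda-\tfrac12$ (from the worst corner) will be seen to enter.

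Next I would treat the singular sum term by term. On the side $\Gamma_j$ emanating from $x_j$ (say $\theta_j=0$), the normal derivative of $\xi_j r_j^{m\lambda_j}\sin(m\lambda_j\theta_j)$ behaves like $r_j^{m\lambda_j-1}$ up to smooth factors and the cut-off, and similarly from the other side at that corner via the $\Gamma_{j-1}$ local coordinates. A one-dimensional computation then shows $r^{\mu}\in H^{s}$ near $0$ for $s<\mu+\tfrac12$, so the term $m=1,\ j$ contributes $\bar u\in H^s(\Gamma)$ for all $s<\lambda_j-\tfrac12$, and the higher-order terms $m=2,3$ are strictly better; taking the minimum over $j$ and over the two endpoints yields $\bar u\in H^s(\Gamma)$ for all $s<\min\{\tfrac32,\lambda-\tfrac12\}$. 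For the state I would use that $\bar y = S\bar u$ is the very weak (indeed $H^{s+1/2}$) solution of \eqref{E3.2} with boundary datum $\bar u\in H^s(\Gamma)$; the lifting/regularity estimate for the Dirichlet Laplacian on a polygon (again with the corner constraint $s<\lambda-\tfrac12<\lambda_j$ preventing corner singularities in the harmonic extension) gives $\bar y\in H^{s+1/2}(\Omega)$, which is exactly the stated range — the threshold $\tfrac32$ for $s$ corresponds to $\bar y\in H^2(\Omega)$.

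The main obstacle I anticipate is the bookkeeping near the non-convex corners: one must verify that for $s<\lambda-\tfrac12$ the index sets $\mathbb{J}^m_{0,p}$ in \eqref{E2.3} genuinely only produce singular exponents $m\lambda_j-1\ge s$ on $\Gamma$ (so that no term is worse than claimed), and simultaneously that the harmonic extension of a boundary function in $H^s(\Gamma)$ does not pick up an $r_j^{\lambda_j}$ interior singularity with $\lambda_j<1$ that would destroy $\bar y\in H^{s+1/2}(\Omega)$ — this is precisely why the bound is $\lambda-\tfrac12$ and not larger, and making this sharp requires the fine decomposition already packaged in Lemma \ref{L3.1} rather than a crude $W^{2,p}$ bound. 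Since all the heavy regularity theory is imported from \cite{AMPR-2015} via Lemmas \ref{L3.1}–\ref{L3.1} and the cited corollaries, the argument here is essentially the translation of that corner-wise information through the trace operator and the harmonic lifting, and the one-dimensional Sobolev exponent count $r^\mu\in H^{s}\iff s<\mu+\tfrac12$.
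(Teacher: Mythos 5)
The paper offers no proof of Lemma \ref{L2.3} at all: it is imported verbatim from \cite{AMPR-2015} (Corollary 5.3, Corollary 4.2, Theorem 3.4). So your proposal is necessarily ``a different route'' in the trivial sense, and the real question is whether your reconstruction is sound. Its architecture is the right one and is essentially how these results are obtained in the cited reference: in the unconstrained case $\bar u=\tfrac1\nu\partial_n\bar\varphi$, the corner decomposition of $\bar\varphi$ reduces everything to the trace of $\nabla\bar\varphi_{\reg}$ plus the one-dimensional count $\xi r^{\mu}\in H^{s}$ iff $s<\mu+\tfrac12$ applied to $\partial_n(\xi_j r_j^{m\lambda_j}\sin(m\lambda_j\theta_j))\sim r_j^{m\lambda_j-1}$, and the state regularity then follows from the shift theorem for the Dirichlet problem on a polygon. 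Two small corrections: the endpoint $\lambda-\tfrac12$ comes exclusively from the $m=1$ singular functions, not from the trace of the regular part (for $p\to p_D$ the regular part only forces $s<\tfrac12+\min\{1,\lambda\}$, which is weaker); and the $\tfrac32$ endpoint for the state is the usual $H^2$ barrier for piecewise-linear statements, fine as you describe it.

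The one genuine gap is at the very first step. Lemma \ref{L3.1} is stated under the hypothesis $y_\Omega\in L^{\infty}(\Omega)$, whereas Lemma \ref{L2.3} only assumes $y_\Omega\in H^{t}(\Omega)\cap L^2(\Omega)$ for $t<\min\{1,\lambda-1\}$, and in two dimensions $H^{t}\not\hookrightarrow L^{\infty}$ for $t<1$; so you cannot literally invoke Lemma \ref{L3.1}. More substantively, the decomposition $\bar\varphi=\bar\varphi_{\reg}+\bar\varphi_{\sing}$ with $\bar\varphi_{\reg}\in W^{2,p}(\Omega)$ requires the right-hand side $\bar y-y_\Omega$ of the adjoint equation to lie in $L^{p}(\Omega)$, and the integrability of $\bar y$ is itself governed by $\bar u=\tfrac1\nu\partial_n\bar\varphi$ --- exactly the quantity you are trying to control. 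Breaking this circle requires a bootstrap (start from $\bar u\in L^2(\Gamma)$, $\bar y\in H^{1/2}(\Omega)$, feed back into the adjoint equation, and iterate), and it is precisely this bootstrap that produces the restriction $p<p_D$ in the unconstrained non-convex case. Your plan treats all of this as already packaged inside Lemma \ref{L3.1}; that is defensible as a citation strategy, but then the hypothesis mismatch must be repaired (e.g.\ by using the version of the expansion in \cite{AMPR-2015} proved under $L^{p}$ data), and the ``main obstacle'' you anticipate --- bookkeeping of the index sets $\mathbb{J}^m_{0,p}$ --- is in fact the easy part compared with this fixed-point argument.
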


For constrained problems, we can improve this result, {see}
\cite[Corollary 4.2, Theorem~3.4]{AMPR-2015}.
\begin{lemma}[control constrained case]\label{L2.4}Suppose $-\infty<
a<b<\infty$, $y_\Omega\in H^{t}(\Omega)
{\cap L^2(\Omega)}$ for all
$t<{\min\{1,\lambda-1\}}$. Assume that the
optimal control has a finite number of kink points.
Then
\begin{align}
&\bar u\in H^{s}(\Gamma)&&\forall s<\min\{\tfrac32,\Lambda-\tfrac12\},\\
&\bar y\in H^{s+\tfrac12}(\Omega)&&
\forall s<\min\{\tfrac32,\Lambda-\tfrac12,
{\lambda}+\tfrac12\}.
\end{align}
\end{lemma}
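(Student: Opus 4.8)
The plan is to read off the regularity of $\bar u$ from the projection identity~\eqref{eq:proj} together with the structural decomposition of $\bar\varphi$ provided by Lemma~\ref{L3.1}, and then to deduce the regularity of $\bar y=Su$ by treating it as the harmonic extension of $\bar u$ and invoking the corner expansion of the Dirichlet problem in a polygon. Everything is localized at the corners by the cut-off functions $\xi_j$ and glued together using the hypothesis that $\bar u$ has only finitely many kink points; since the assertions are inclusions, it suffices to produce a local lower bound on the smoothness of $\bar u$ near each corner and near each kink. As a preliminary, fix a large $p<\infty$ and take normal traces in~\eqref{E3.5}: the regular part gives $\partial_n\bar\varphi_{\reg}|_\Gamma\in W^{1-1/p,p}(\Gamma)$ by the trace theorem (the outward normal being constant along each side), and since $\Gamma$ is one-dimensional the embedding $W^{1-1/p,p}\hookrightarrow H^{3/2-2/p}$ places this contribution in $H^s(\Gamma)$ for every $s<3/2$; a short computation in polar coordinates shows that $\partial_n\bigl(\xi_j r_j^{m\lambda_j}\sin(m\lambda_j\theta_j)\bigr)$ is a nonzero multiple of $r_j^{m\lambda_j-1}$ on each of the two sides meeting at $x_j$, and, as a function of arclength, $r^{m\lambda_j-1}\in H^s$ precisely for $s<m\lambda_j-\tfrac12$ since $m\lambda_j\notin\mathbb Z$.

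The decisive step is the behaviour of $\proj_{[a,b]}\bigl(\tfrac{1}{\nu}\partial_n\bar\varphi\bigr)$ near each corner, and this is where the exponent $\Lambda$ is produced. If $\lambda_j>1$, then $m\lambda_j-1>0$ for all $m\ge1$, so $\partial_n\bar\varphi$ is continuous at $x_j$; by the finite-kink hypothesis a neighbourhood of $x_j$ in $\Gamma$ splits into finitely many arcs on which $\bar u$ equals either a constant or $\tfrac{1}{\nu}\partial_n\bar\varphi$, whence locally $\bar u\in H^s$ for $s<\min\{\tfrac32,\lambda_j-\tfrac12\}=\min\{\tfrac32,\Lambda_j-\tfrac12\}$, the critical contribution being $r_j^{\lambda_j-1}$ when $\lambda_j<2$. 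If $\lambda_j<1$ and $c_{j,1}\neq0$, the term $r_j^{\lambda_j-1}$ forces $\tfrac{1}{\nu}\partial_n\bar\varphi$ to blow up with a single fixed sign on both sides adjacent to $x_j$ (because $\cos(\lambda_j\omega_j)=-1$, the two one-sided normal derivatives of $r^{\lambda_j}\sin(\lambda_j\theta)$ carry the same sign), so, as $a,b$ are finite and $0\in[a,b]$, $\bar u$ equals $a$ or $b$ on a whole neighbourhood of $x_j$ in $\Gamma$ and is smooth there; consistently, $\Lambda_j=\lambda_j\le1$ is omitted from~\eqref{eq:Lambda}. If $\lambda_j<1$ and $c_{j,1}=0$, then $\omega_j<2\pi$ forces $2\lambda_j>1$, the leading singular contribution to $\partial_n\bar\varphi$ is $r_j^{2\lambda_j-1}\to0$ at $x_j$, the projection does not clip $\bar u$ close to $x_j$, and $\bar u$ is there at least $H^s$ for $s<\min\{\tfrac32,2\lambda_j-\tfrac12\}=\min\{\tfrac32,\Lambda_j-\tfrac12\}$; a corner with $\Lambda_j=2\lambda_j\le1$ is again omitted from~\eqref{eq:Lambda}. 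Finally, at each of the finitely many kink points $\bar u$ equals, up to a smoother remainder, $\proj_{[a,b]}$ of a smooth function meeting $a$ or $b$ transversally, so it behaves locally like $t\mapsto\max\{0,t\}$ and lies in $H^{3/2-\varepsilon}$. Patching the finitely many local pieces yields $\bar u\in H^s(\Gamma)$ for all $s<\min\{\tfrac32,\Lambda-\tfrac12\}$.

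For the state, $\bar y=Su$ solves $-\Delta\bar y=0$ in $\Omega$, $\bar y=\bar u$ on $\Gamma$, in the very weak sense, so it is the harmonic extension of $\bar u$. Its corner expansion reads $\bar y=\bar y_{\reg}+\sum_j d_j\,\xi_j r_j^{\lambda_j}\sin(\lambda_j\theta_j)$ with $\bar y_{\reg}\in H^{s+1/2}(\Omega)$ whenever $\bar u\in H^s(\Gamma)$ (outside the exceptional values $s+\tfrac12\in\mathbb Z$), while $r_j^{\lambda_j}\in H^{\sigma}(\Omega)$ exactly for $\sigma<1+\lambda_j$; the coefficients $d_j$ are nonlocal functionals of $\bar u$ and may be nonzero even where $\bar u$ is smooth near $x_j$, so these terms genuinely occur. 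Combining this with the bound already obtained for $\bar u$ gives $\bar y\in H^{s+1/2}(\Omega)$ for all $s<\min\{\tfrac32,\Lambda-\tfrac12,\lambda+\tfrac12\}$, the third entry coming from $s+\tfrac12<1+\lambda$. Both conclusions are exactly \cite[Corollary~4.2, Theorem~3.4]{AMPR-2015}; alternatively one runs a short bootstrap in $s$ using~\eqref{eq:proj}, elliptic regularity for~\eqref{E3.3}, and the harmonic extension, the hypothesis $y_\Omega\in H^t(\Omega)$ for $t<\min\{1,\lambda-1\}$ guaranteeing that the iteration reaches the stated exponents.

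I expect the projection step to be the main obstacle. One has to show rigorously that the pole of $\partial_n\bar\varphi$ at a nonconvex corner with $c_{j,1}\neq0$ is genuinely truncated by $\proj_{[a,b]}$, that is, that $\partial_n\bar\varphi$ eventually keeps a fixed sign and leaves $[\nu a,\nu b]$ near $x_j$; this follows from $\lambda_j<1$ and $c_{j,1}\neq0$ only after the lower-order terms of~\eqref{E3.5} have been controlled near the corner. Conversely, when $c_{j,1}=0$ one must argue that the control is no smoother than $r^{2\lambda_j-1}$ dictates, which is exactly what forces the case distinction~\eqref{MME7} and the worst-case definition~\eqref{eq:Lambda}. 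Quantifying the $H^{3/2-\varepsilon}$ loss at the kinks and ruling out an accumulation of kinks at a corner---here absorbed into the standing hypothesis---is the remaining technical point, while the bookkeeping around the exceptional Sobolev indices in the state estimate is routine but should not be skipped.
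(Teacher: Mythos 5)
The paper does not prove Lemma \ref{L2.4} at all: it is imported verbatim from \cite[Corollary 4.2, Theorem 3.4]{AMPR-2015}, so there is no in-paper argument to compare against. Your sketch is a sensible reconstruction of what that proof must contain, and its architecture is the intended one: the projection formula \eqref{eq:proj} plus the corner expansion \eqref{E3.5} for the control, the sign/clipping argument at nonconvex corners with $c_{j,1}\neq 0$ (which is exactly Lemma \ref{L2.5} of this paper), the case distinction producing $\Lambda_j$ as in \eqref{MME7}, and the harmonic extension with its own corner expansion for the state, which correctly accounts for the extra cap $s<\lambda+\tfrac12$.

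There is, however, one concrete flaw in your preliminary step that caps your argument below the claimed exponent. The embedding $W^{1-1/p,p}(\Gamma)\hookrightarrow H^{3/2-2/p}(\Gamma)$ for $p>2$ is false: Sobolev embeddings never gain differentiability when the integrability exponent decreases (the required inequality $1-\tfrac1p\geq \tfrac32-\tfrac2p$ forces $p\leq 2$). On the bounded one-dimensional manifold $\Gamma$ the correct conclusion from $\bar\varphi_{\reg}\in W^{2,p}(\Omega)$ is only $\partial_n\bar\varphi_{\reg}\in W^{1-1/p,p}(\Gamma)\hookrightarrow H^{1-1/p}(\Gamma)$, i.e.\ $H^{s}$ for $s<1$. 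With that, your regular contribution never reaches $s<\tfrac32$, and the whole lemma degrades to $\bar u\in H^{s}(\Gamma)$ for $s<\min\{1,\Lambda-\tfrac12\}$. To recover the full range one needs the finer splitting of Lemma \ref{C4.1r} with $\bar\varphi_{\reg}\in W^{3,p}(\Omega)$ (whose trace of the gradient lies in $W^{2-1/p,p}(\Gamma)\hookrightarrow H^{2-1/p}(\Gamma)$, comfortably above $H^{3/2}$), or at least $\bar\varphi_{\reg}\in H^{2+t}(\Omega)$ with $t$ up to $\min\{1,\lambda-1\}$ obtained by bootstrapping the regularity of the right-hand side $\bar y-y_\Omega$ of \eqref{E3.3}; this is precisely where the data hypothesis $y_\Omega\in H^{t}(\Omega)$ enters, and your sketch never actually uses it. The remaining steps (the same-sign computation of the two one-sided normal derivatives at a nonconvex corner, the $r^{m\lambda_j-1}\in H^{s}$ for $s<m\lambda_j-\tfrac12$ bookkeeping, the $H^{3/2-\varepsilon}$ behaviour at a transversal kink, and the state estimate via $r_j^{\lambda_j}\sin(\lambda_j\theta_j)\in H^{\sigma}(\Omega)$ for $\sigma<1+\lambda_j$) are correct.
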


We also have the following result from \cite[Proof of Theorem 3.4]{AMPR-2015}.
\begin{lemma}\label{L2.5}Suppose $-\infty< a<b<\infty$ and $y_\Omega\in L^2(\Omega)$. If $\lambda_j<1$ and $c_{j,1}\neq 0$, then one of the control constraints is active near the corner $x_j$, i.e.,
there exists $\rho_j>0$ such that for $x\in\Gamma$ with $|x-x_j|<\rho_j$ either
$\bar u\equiv a$ or $\bar u\equiv b$.
\end{lemma}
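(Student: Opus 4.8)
The plan is to combine the pointwise projection formula \eqref{eq:proj} with the asymptotic expansion \eqref{E3.5} of $\bar\varphi$ near the corner $x_j$: the point is that, when $\lambda_j<1$ and $c_{j,1}\neq 0$, the normal derivative $\partial_n\bar\varphi$ blows up to $+\infty$ or $-\infty$ as $x\to x_j$ along $\Gamma$, with the \emph{same} sign on the two sides $\Gamma_j$ and $\Gamma_{j-1}$ meeting at $x_j$. Since $\proj_{[a,b]}$ maps every value $\geq b$ to $b$ and every value $\leq a$ to $a$, this forces $\bar u$ to be constant ($=b$ or $=a$) on a one-sided neighbourhood of $x_j$ on each of the two sides, hence on a full neighbourhood $\{x\in\Gamma:|x-x_j|<\rho_j\}$.

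First I would record the local structure of the adjoint state. By \eqref{E3.5}, for $r_j<R_j$ --- where $\xi_j\equiv1$ and the cut-off functions of the other corners vanish --- one may write
\[
\bar\varphi=c_{j,1}\,r_j^{\lambda_j}\sin(\lambda_j\theta_j)+\psi_j,
\]
with $\psi_j$ gathering $\bar\varphi_{\reg}$ and the higher-order singular terms $r_j^{m\lambda_j}\sin(m\lambda_j\theta_j)$, $m=2,3$, attached to $x_j$. The feature of $\psi_j$ the argument relies on is that $\partial_n\psi_j$ is bounded on $\Gamma\cap\{r_j<R_j\}$: the $m$-th singular term contributes only $O(r_j^{m\lambda_j-1})=o(r_j^{\lambda_j-1})$, and $\partial_n\bar\varphi_{\reg}$ is bounded, e.g.\ whenever $\bar\varphi_{\reg}$ lies in $W^{2,p}$ for some $p>2$ (cf.\ Lemma \ref{L3.1}), since its normal-derivative trace then embeds into $C(\Gamma)$.

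Next I would carry out the sign computation. In the local polar coordinates $(r_j,\theta_j)$ with $\theta_j=0$ on $\Gamma_j$, $\theta_j=\omega_j$ on $\Gamma_{j-1}$, and $\lambda_j=\pi/\omega_j$, a direct calculation gives, near $x_j$,
\[
\partial_n\bigl(r_j^{\lambda_j}\sin(\lambda_j\theta_j)\bigr)=-\lambda_j\,r_j^{\lambda_j-1}
\qquad\text{on both }\Gamma_j\text{ and }\Gamma_{j-1},
\]
the coincidence of signs being caused by $\cos(\lambda_j\omega_j)=\cos\pi=-1$ together with the opposite orientation of the two outward normals. Therefore, on $\Gamma\cap\{r_j<R_j\}$,
\[
\frac1\nu\,\partial_n\bar\varphi(x)=-\frac{c_{j,1}\lambda_j}{\nu}\,r_j^{\lambda_j-1}+\frac1\nu\,\partial_n\psi_j(x),
\]
and since $\lambda_j<1$ the first term tends to $+\infty$ (if $c_{j,1}<0$) or to $-\infty$ (if $c_{j,1}>0$) as $r_j\to0^+$, while $\frac1\nu\partial_n\psi_j$ stays bounded. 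Hence $\frac1\nu\partial_n\bar\varphi(x)\to+\infty$ or $-\infty$, uniformly as $x\to x_j$ along $\Gamma$.

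Finally I would conclude: pick $\rho_j\in(0,R_j)$ so small that $\frac1\nu\partial_n\bar\varphi(x)>b$ for all $x\in\Gamma$ with $|x-x_j|<\rho_j$ when $c_{j,1}<0$ (respectively $\frac1\nu\partial_n\bar\varphi(x)<a$ when $c_{j,1}>0$), which is possible by the previous step; then \eqref{eq:proj} yields $\bar u(x)=\proj_{[a,b]}\bigl(\tfrac1\nu\partial_n\bar\varphi(x)\bigr)=b$ (respectively $=a$) for a.e.\ such $x$. The step I expect to be the main obstacle is the boundedness of $\partial_n\psi_j$ near $x_j$: under the mere assumption $y_\Omega\in L^2(\Omega)$ one has $\bar y\in L^\infty(\Omega)$ by the maximum principle but still only $\bar y-y_\Omega\in L^2(\Omega)$, so a priori $\bar\varphi_{\reg}\in H^2(\Omega)$ and its normal-derivative trace lies only in $H^{1/2}(\Gamma)$, which does not by itself rule out thin spikes that could pull $\frac1\nu\partial_n\bar\varphi$ back into $(a,b)$ on a subset of positive measure accumulating at $x_j$. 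Establishing the required pointwise control of the remainder (or bypassing it) is exactly the content of the sharp regularity analysis of the adjoint state and its normal derivative carried out in \cite{AMPR-2015}, which the statement invokes.
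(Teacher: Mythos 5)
Your strategy is exactly the one the paper intends: Lemma \ref{L2.5} is quoted from the proof of Theorem 3.4 in \cite{AMPR-2015}, and the mechanism described there and echoed in this paper (see the discussion after \eqref{MME7} and the proof of Lemma \ref{L5.9}) is precisely ``pole of $\partial_n\bar\varphi$ at $x_j$ with sign fixed by $c_{j,1}$, then clamp via the projection formula \eqref{eq:proj}.'' Your sign computation is correct and consistent with the paper: $\partial_n\bigl(r_j^{\lambda_j}\sin(\lambda_j\theta_j)\bigr)=-\lambda_j r_j^{\lambda_j-1}$ on both edges because $\cos(\lambda_j\omega_j)=-1$ compensates the reversal of the outward normal, so $c_{j,1}>0$ drives $\tfrac1\nu\partial_n\bar\varphi$ to $-\infty$ and activates the lower bound, exactly as asserted in the proof of Lemma \ref{L5.9}. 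The higher-order terms are harmless since $m\lambda_j-1>0$ for $m\ge2$ (recall $\lambda_j>1/2$).

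The step you flag is indeed the only genuine gap, and you have diagnosed it correctly. With $y_\Omega\in L^2(\Omega)$ alone the splitting gives a priori only $\bar\varphi_{\reg}\in H^2(\Omega)$, whose normal-derivative trace lies merely in $H^{1/2}(\Gamma)$; such a function can be unbounded, and since the a.e.\ statement requires the exceptional set where $\partial_n\bar\varphi_{\reg}$ exceeds a multiple of $r_j^{\lambda_j-1}$ to be \emph{null} rather than small, no $L^q$- or exponential-integrability bound available from $H^{1/2}$ closes the argument. What is needed --- and what the cited regularity analysis supplies --- is that $\partial_n\bar\varphi_{\reg}$ is bounded (indeed continuous) up to $x_j$, e.g.\ because $\bar\varphi_{\reg}\in W^{2,p}(\Omega)$ for some $p>2$, whence $\nabla\bar\varphi_{\reg}\in C(\overline\Omega)$; compare Lemma \ref{L3.1}, which delivers exactly this splitting but under the stronger hypothesis $y_\Omega\in L^\infty(\Omega)$, using that $\bar u\in L^\infty(\Gamma)$ forces $\bar y\in L^\infty(\Omega)$. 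Bridging the remaining hypothesis gap between $y_\Omega\in L^2(\Omega)$ and the $W^{2,p}$, $p>2$, splitting is precisely the content of the citation to \cite{AMPR-2015}; it cannot be bypassed by soft trace arguments, so your proof is complete modulo that single regularity input, which you have correctly isolated.
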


Finally, we can write the representation of the adjoint state for regular enough data.
For $m\in \mathbb{Z}$, ${t\in\mathbb{R}}$ and $1< p \leq +\infty$
we will also need
\begin{equation}\label{eq:L}
\mathbb{L}^m_{t,p}=\left\{j\in\{1,\ldots,M\}\mbox{ such that }
0<m\lambda_j < 2+t-\frac{2}{p}\mbox{ and }
m\lambda_j\in\mathbb{Z}\right\}.\end{equation}
The following result is a consequence of \cite[Corollary 4.4]{AMPR-2015}.
\begin{lemma}
Suppose
that $\Omega$ is convex or $-\infty<a<b<+\infty$, and that $y_\Omega\in W^{1,p^*}(\Omega)$ with $p^*>2$. Then, for $p>2$ such that
\[
\frac{3p-2}{\lambda_j p}\not\in \mathbb{Z}\ \mbox{ for all } j\in \{1,\ldots,M\}.
\]
and
\[p\leq p^*,\ p<p_D,\ p<\frac{2}{2-\min\{\lambda,2\}}\]
there exist a unique function $\bar\varphi_{\reg}\in W^{3,p}(\Omega)$ and unique real numbers $( c_{j,m})_{\mathbb{J}^m_{1,p}}$ and $(d_{j,m})_{\mathbb{L}^m_{1,p}}$, such that
\begin{align*}
\bar\varphi & = \bar\varphi_{\reg} + \sum_{m=1}^5\sum_{j\in \mathbb{J}^m_{1,p} }c_{j,m} \xi_j r_j^{m\lambda_j}\sin(m\lambda_j\theta_j) \\
&+ \sum_{m=1,3}\sum_{j\in \mathbb{L}^m_{1,p} } d_{j,m}\xi_j  r_j^{2} \left(\log(r_j)\sin(2\theta_j) + \theta_j\cos(2\theta_j) \right).
\end{align*}
\label{C4.1r}
\end{lemma}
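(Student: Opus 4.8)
The plan is to obtain this as a direct application of the sharp $L^p$ corner‑asymptotics for the Dirichlet Laplacian on polygons --- namely \cite[Corollary~4.4]{AMPR-2015} --- to the adjoint equation \eqref{E3.3}. Indeed, once we know that under the stated hypotheses and for the admissible $p$ the right‑hand side $f:=\bar y-y_\Omega$ of \eqref{E3.3} belongs to $W^{1,p}(\Omega)$, that corollary yields the decomposition together with the uniqueness of $\bar\varphi_{\reg}$ and of the coefficients. So the proof has three steps: (i) show $f\in W^{1,p}(\Omega)$; (ii) invoke \cite[Corollary~4.4]{AMPR-2015}; (iii) read off which singular functions actually occur in the written range of indices.

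For (i) I would first observe that the range of admissible $p$ is nonempty only when $\lambda>1$, i.e.\ when $\Omega$ is a convex polygon with no straight corner: if some $\omega_j\ge\pi$ then $\lambda\le1$ and $\tfrac{2}{2-\min\{\lambda,2\}}\le2$, so no $p>2$ qualifies and the statement is void. We may therefore assume $\lambda>1$, which also gives $p_D=+\infty$ and $\Lambda=\lambda$. By Lemma~\ref{L2.3} (or Lemma~\ref{L2.4}) one has $\bar y\in H^{s+1/2}(\Omega)$ for all $s<\min\{\tfrac32,\lambda-\tfrac12\}$, i.e.\ $\bar y\in H^{\sigma}(\Omega)$ for all $\sigma<\min\{2,\lambda\}$; since $1<\min\{2,\lambda\}\le2$, the Sobolev embedding $H^{\sigma}(\Omega)\hookrightarrow W^{1,2/(2-\sigma)}(\Omega)$ valid for $1<\sigma<2$ gives $\bar y\in W^{1,p}(\Omega)$ for every $p<\tfrac{2}{2-\min\{\lambda,2\}}$. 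Combined with $y_\Omega\in W^{1,p^*}(\Omega)$ and $p\le p^*$ this yields $f=\bar y-y_\Omega\in W^{1,p}(\Omega)$; in particular $f\in C(\overline\Omega)$, so the corner values $f(x_j)$ are well defined.

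For (ii)--(iii) I apply \cite[Corollary~4.4]{AMPR-2015} to $-\Delta\bar\varphi=f$, $\bar\varphi|_\Gamma=0$ with $f\in W^{1,p}(\Omega)=W^{t,p}(\Omega)$, $t=1$: the solution lies in $W^{3,p}(\Omega)$ modulo the finitely many corner singular functions of exponent strictly below $2+t-\tfrac2p=3-\tfrac2p$. The ones with non‑integer exponent $m\lambda_j$, i.e.\ $j\in\mathbb{J}^m_{1,p}$, contribute $\xi_jr_j^{m\lambda_j}\sin(m\lambda_j\theta_j)$; since $\lambda_j>1$ here, $m\lambda_j<3$ forces $m\le2$, so $\mathbb{J}^m_{1,p}=\emptyset$ for $m\ge3$ and the bound $m=1,\dots,5$ in the statement is merely a safe uniform cap. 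At a corner with $m\lambda_j$ a positive integer below $3-\tfrac2p$, i.e.\ $j\in\mathbb{L}^m_{1,p}$, necessarily $m\lambda_j=2$ (again since $\lambda_j>1$), so $m=1$ and $\omega_j=\tfrac\pi2$; there the $r^2$ term of the particular solution, forced by the value $f(x_j)$, collides with the harmonic eigenfunction $r^2\sin2\theta_j$ and produces the resonant pair $\xi_jr_j^2(\log r_j\sin2\theta_j+\theta_j\cos2\theta_j)$, so in the substantive case the second sum reduces to its $m=1$ part. The index condition $\tfrac{3p-2}{\lambda_j p}\notin\mathbb{Z}$, equivalently $m\lambda_j\ne3-\tfrac2p$ for all $m$, makes sure no singular exponent sits exactly on the weight line $3-\tfrac2p$, which is what renders the decomposition and the numbers $c_{j,m},d_{j,m}$ well defined. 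Uniqueness then follows because none of the listed singular functions belongs to $W^{3,p}(\Omega)$ and they have pairwise distinct leading exponents, hence are linearly independent modulo $W^{3,p}(\Omega)$; the low‑order coefficients necessarily coincide with those of Lemma~\ref{L3.1}.

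The only delicate step is (i), namely raising the integrability of $f$ up to the sharp exponent $\tfrac{2}{2-\min\{\lambda,2\}}$; it rests entirely on the sharp $H^\sigma$‑regularity of the state from \cite{AMPR-2015}. In a convex domain the optimal control is already fairly smooth ($\bar u\in H^{s}(\Gamma)$ for $s<\lambda-\tfrac12$, and $\lambda-\tfrac12>\tfrac12$), so the only obstruction to $\bar y\in H^2(\Omega)$ is the corner singularity $r^{\lambda_j}$ of the harmonic lifting, and it is precisely that exponent which, via Sobolev embedding, fixes the admissible range of $p$. Everything afterwards --- identifying the resonant terms, checking that all remaining contributions of $f$ are absorbed into $\bar\varphi_{\reg}\in W^{3,p}(\Omega)$, and uniqueness --- is routine bookkeeping inside the framework of \cite[Corollary~4.4]{AMPR-2015}.
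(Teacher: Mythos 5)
Your overall strategy --- reduce everything to \cite[Corollary 4.4]{AMPR-2015} applied to the adjoint equation after checking that $f=\bar y-y_\Omega\in W^{1,p}(\Omega)$ --- is exactly what the paper does (it offers nothing beyond the citation), and your treatment of the convex case is sound: the embedding $H^{\sigma}(\Omega)\hookrightarrow W^{1,2/(2-\sigma)}(\Omega)$ matching the condition $p<\tfrac{2}{2-\min\{\lambda,2\}}$, the identification of the resonant $r_j^{2}\log r_j$ term at $\omega_j=\pi/2$, the reading of $\tfrac{3p-2}{\lambda_j p}\notin\mathbb{Z}$ as non-resonance of the weight line $3-\tfrac2p$, and the uniqueness argument are all correct.

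The problem is your step declaring the non-convex case void. You are right that, read literally, $p>2$ together with $p<\tfrac{2}{2-\min\{\lambda,2\}}$ forces $\lambda>1$; but this reading cannot be the intended one. It contradicts the hypothesis ``$\Omega$ is convex \emph{or} $-\infty<a<b<+\infty$'' (which exists precisely to admit non-convex constrained problems), it contradicts the later use of Lemma \ref{C4.1r} in Lemmata \ref{L5.3} and \ref{L5.7} on non-convex domains, and --- most tellingly --- it is incompatible with the index ranges in the statement itself: under your reading every $\lambda_j>1$, so $\mathbb{J}^m_{1,p}=\emptyset$ for $m\geq3$ and $\mathbb{L}^3_{1,p}=\emptyset$, whereas the caps $m\leq5$ and $m\in\{1,3\}$ are calibrated exactly to $\lambda_j>\tfrac12$, i.e.\ to non-convex corners ($m\lambda_j<3-\tfrac2p<3$ with $\lambda_j>\tfrac12$ allows $m\leq5$, and $3\lambda_j=2$ occurs at $\omega_j=3\pi/2$, producing the $m=3$ logarithmic term). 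The third restriction on $p$ is evidently meant to be binding only in the convex case (or $\lambda$ there should be read as $\Lambda$), and a proof of the intended statement must also cover the non-convex constrained regime. There your route through $\bar y\in H^{\sigma}(\Omega)$ is not the right one; instead $f\in W^{1,p}(\Omega)$ follows from $\bar u\in L^\infty(\Gamma)$ and the condition $p<p_D$, since the harmonic extension of bounded boundary data has gradient behaving like $r_j^{\lambda_j-1}\in L^p$ exactly for $p<\tfrac{2}{1-\lambda_j}$. Only after that does the bookkeeping of \cite[Corollary 4.4]{AMPR-2015} produce the full list of singular functions with $m$ up to $5$ and the $m=3$ resonant pair, which is the content of the lemma that Section \ref{sec:5} actually relies on.
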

Notice that the coefficients $c_{j,m}$ that appear in both expansions in
Lemmata \ref{L3.1} and \ref{C4.1r} coincide, due to the uniqueness of the
expansion. In the expansion of Lemma \ref{C4.1r} new terms appear that belong
to $W^{2,p}(\Omega)$ for all $p<+\infty$ but not to $W^{3,p}(\Omega)$ for $p>2$
satisfying the conditions in Lemma \ref{C4.1r}.

\section{A general discretization error estimate}\label{sec:3}
In this section we will present a general discretization error estimate in Theorem \ref{T3.2}.
The terms in this general estimate have to be estimated in particular cases.
This work will be done in later sections.

For the discretization, consider a family of regular triangulations $\{\mathcal{T}_h\}$ depending on a
mesh parameter $h$ in the sense of Ciarlet \cite{Ciarlet91}. Notice, that a triangulation $\mathcal{E}_h$ of the boundary is naturally induced by $\mathcal{T}_h$. We assume that the
space $Y_h$ is the space of
conforming piecewise linear finite elements. The space $U_h$ is the space of piecewise linear functions generated by the trace of elements
of $Y_h$ on the boundary $\Gamma$. We denote the subspace of $Y_h$
with vanishing boundary values by $Y_{0,h}$.

We also introduce the discrete solution operator $S_h:U\rightarrow Y_h$. For
$u\in U$ the function $S_hu\in Y_h$ is defined as the unique solution of
\begin{equation}\label{discharmonic}
  (\nabla S_hu,\nabla z_h)_{L^2(\Omega)}=0 \quad \forall z_h\in Y_{0,h} \mbox{ and }  (S_hu-u,v_h)_{L^2(\Gamma)}= 0\ \forall v_h\in U_h.
\end{equation}
We emphasize that on the boundary $S_hu$ coincides with the $L^2$-projection of
$u$ on $U_h$. Thus we get $S_h u_h= u_h$ on $\Gamma$ for $u_h\in U_h$.
Notice as well that \eqref{discharmonic} is not a conforming discretization
of the very weak formulation of the state equation. However, according to
\cite{Apel-Nicaise-Pfefferer2014,Berggren2004}, its applicability is
guaranteed.

In our discretized optimal control problem we aim to minimize the
objective function
\[
\mbox{($P_h$)}\left\{\begin{array}{l}\displaystyle \min
J_h(u_h)=\frac{1}{2}{\|S_hu_h-y_\Omega\|_{L^2(\Omega)}^2}
 + \frac{\nu}{2}{\|u_h\|_{L^2(\Gamma)}^2}\\[1ex]
    \mbox{subject to} \ \
        u_h\in U_{ad}^h:=\{u_h\in U_h:\ a\leq u_h(x)\leq b \ \ \mbox{ for all } \ x\in \Gamma\}.
    \end{array}\right.
\]
The first order optimality conditions of this problem were derived in \cite{Casas-Raymond2006} and are stated in the next lemma.

\begin{lemma}Problem $(P_h)$ has a unique solution $\bar u_h\in U_{ad}^h$, with related discrete state $\bar y_h=S_h \bar u_h$ and adjoint state $\bar \varphi_h$. The following discrete optimality system is satisfied
\begin{subequations}
\begin{align}
(\nu\bar u_h-\partial_n^h \bar \varphi_h,u_h-\bar u_h)_{L^2(\Gamma)}& \ge  0
 \mbox{ for all } u_h\in U_{ad}^h \label{VI}\\
(\nabla \bar y_h,\nabla z_h)_{L^2(\Omega)}&=0 \mbox{ for all } z_h\in Y_{0,h}\,
\mbox{ and } \bar
y_h|_\Gamma=\bar u_h, \label{discretestate}
\\
(\nabla \bar \varphi_h,\nabla z_h)_{L^2(\Omega)}&=(\bar y_h-y_\Omega,z_h)_{L^2(\Omega)}
\mbox{ for all } z_h\in Y_{0,h},
\label{discreadjoint}
\end{align}
\end{subequations}
where the discrete normal derivative $\partial_n^h \bar\varphi_h\in U_h$ is defined
as the unique solution  of
\begin{equation}
(\partial_n^h \bar\varphi_h,z_h)_{L^2(\Gamma)}=-(\bar y_h-y_\Omega,z_h)_{L^2(\Omega)}+
(\nabla \bar \varphi_h,\nabla z_h)_{L^2(\Omega)} \mbox{ for all } z_h\in Y_h.
\label{normal}
\end{equation}
\end{lemma}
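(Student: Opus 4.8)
The plan is to treat $(P_h)$ as a finite\nobreakdash-dimensional convex quadratic program: first establish existence and uniqueness of $\bar u_h$ by strict convexity and coercivity, then derive the optimality system by differentiating $J_h$ and rewriting the tracking term through the discrete adjoint state and the discrete normal derivative. The map $u_h\mapsto S_hu_h$ is linear by \eqref{discharmonic}, so $u_h\mapsto\tfrac12\|S_hu_h-y_\Omega\|_{L^2(\Omega)}^2$ is convex, while $u_h\mapsto\tfrac\nu2\|u_h\|_{L^2(\Gamma)}^2$ is strictly convex since $\nu>0$; hence $J_h$ is continuous, strictly convex and coercive on the finite\nobreakdash-dimensional space $U_h$ (coercivity is only needed when $a=-\infty$ or $b=+\infty$, as otherwise $U_{ad}^h$ is bounded). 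Since $U_{ad}^h$ is nonempty — it contains $0$ because $0\in[a,b]$ — closed and convex, $J_h$ attains its infimum over $U_{ad}^h$ at a unique point $\bar u_h$; we set $\bar y_h:=S_h\bar u_h$, so \eqref{discretestate} is just the definition of $S_h$ in \eqref{discharmonic}, and we define $\bar\varphi_h\in Y_{0,h}$ by \eqref{discreadjoint}, which is well posed by the Lax--Milgram lemma on $Y_{0,h}\subset H^1_0(\Omega)$, so \eqref{discreadjoint} holds by construction.

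Next I would check that the discrete normal derivative is well defined. The right\nobreakdash-hand side of \eqref{normal} is a linear functional of $z_h\in Y_h$; restricted to $z_h\in Y_{0,h}$ it vanishes by \eqref{discreadjoint}, so it depends only on the trace $z_h|_\Gamma$. Since the trace map $Y_h\to U_h$ is surjective by the very definition of $U_h$, the Riesz representation theorem on $U_h$ yields a unique $\partial_n^h\bar\varphi_h\in U_h$ satisfying \eqref{normal}.

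The core of the argument is the first\nobreakdash-order condition. As $J_h$ is a quadratic functional on a finite\nobreakdash-dimensional space it is G\^ateaux differentiable with $J_h'(u_h)v_h=(S_hu_h-y_\Omega,S_hv_h)_{L^2(\Omega)}+\nu(u_h,v_h)_{L^2(\Gamma)}$, and the minimizer over the convex set $U_{ad}^h$ satisfies $J_h'(\bar u_h)(u_h-\bar u_h)\ge0$ for all $u_h\in U_{ad}^h$. It remains to identify $(\bar y_h-y_\Omega,S_hv_h)_{L^2(\Omega)}$ with $-(\partial_n^h\bar\varphi_h,v_h)_{L^2(\Gamma)}$ for $v_h\in U_h$. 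For this I would test \eqref{normal} with $z_h=S_hv_h\in Y_h$: by \eqref{discharmonic} the trace of $S_hv_h$ on $\Gamma$ is the $L^2(\Gamma)$\nobreakdash-projection of $v_h$ onto $U_h$, hence equals $v_h$, so $(\partial_n^h\bar\varphi_h,S_hv_h)_{L^2(\Gamma)}=(\partial_n^h\bar\varphi_h,v_h)_{L^2(\Gamma)}$; and $S_hv_h$ is discretely harmonic, so $(\nabla S_hv_h,\nabla z_h)_{L^2(\Omega)}=0$ for all $z_h\in Y_{0,h}$, in particular for $z_h=\bar\varphi_h\in Y_{0,h}$, which removes the last term of \eqref{normal}. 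This gives $(\bar y_h-y_\Omega,S_hv_h)_{L^2(\Omega)}=-(\partial_n^h\bar\varphi_h,v_h)_{L^2(\Gamma)}$, and substituting into $J_h'(\bar u_h)v_h$ with $v_h=u_h-\bar u_h$ produces exactly \eqref{VI}.

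The only genuinely delicate point is this last one: ensuring that $\partial_n^h\bar\varphi_h$, defined through the all\nobreakdash-of\nobreakdash-$Y_h$ identity \eqref{normal}, acts precisely as the boundary multiplier in the reduced gradient. The crux is the interplay of two facts — that $S_hv_h$ reproduces $v_h$ on $\Gamma$, so the $L^2(\Gamma)$\nobreakdash-pairing only sees $v_h$, and that $S_hv_h$ is Galerkin\nobreakdash-orthogonal to $Y_{0,h}\ni\bar\varphi_h$, so the Dirichlet\nobreakdash-form term drops out. Everything else (convexity, coercivity, Lax--Milgram, the formula for $J_h'$) is routine for finite\nobreakdash-dimensional quadratic optimization; the statement is also contained in \cite{Casas-Raymond2006}.
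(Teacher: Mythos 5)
Your proof is correct. The paper itself gives no argument for this lemma — it simply cites \cite{Casas-Raymond2006} — and what you have written is the standard derivation contained in that reference: strict convexity and coercivity of the reduced quadratic functional for existence and uniqueness, well-posedness of \eqref{normal} via the vanishing of the right-hand side on $Y_{0,h}$ (which is exactly \eqref{discreadjoint}), and the identification $(\bar y_h-y_\Omega,S_hv_h)_{L^2(\Omega)}=-(\partial_n^h\bar\varphi_h,v_h)_{L^2(\Gamma)}$ obtained by testing \eqref{normal} with the discrete harmonic extension $S_hv_h$, using that $S_hv_h|_\Gamma=v_h$ for $v_h\in U_h$ and that $\bar\varphi_h\in Y_{0,h}$ kills the Dirichlet-form term.
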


An important tool in the numerical analysis is the construction of a discrete control $u_h^*\in U_{ad}^h$ which interpolates $\bar u$ in a certain sense, see Lemma \ref{L4.1} and Lemma \ref{L5.1}, and satisfies
\begin{equation}\label{eq:u_h}
(\nu\bar u-\partial_n\bar\varphi,u_h^*-\bar u)_{L^2(\Gamma)}=0.
\end{equation}
If the optimal control $\bar u\in H^s(\Gamma)$ with $s<1$ then we use a quasi-interpolant introduced by Casas and Raymond in \cite{Casas-Raymond2006}: Denote the boundary nodes of the mesh by $x_\Gamma^j$, $1\le j\le N(h)$, and
let $e_j$, $1\le j\le N(h)$,  be the nodal basis of $U_h$. We set
\begin{align*}
\bar d(x)&=\nu \bar u(x) -\partial_n \bar\varphi(x),\\
I_j&=\int_{x_\Gamma^{j-1}}^{x_\Gamma^{j+1}} \bar d(x)e_j(x)\,dx,
\end{align*}
and define a control $u_h^*=\sum_{j=1}^{N(h)}u_{h,j}^* e_j$ by its coefficients
\begin{equation}\label{eq:u*1}
u_{h,j}^*=\left\{
\begin{array}{ll}
\displaystyle{\frac{1}{I_j}\int_{x_\Gamma^{j-1}}^{x_\Gamma^{j+1}} \bar d(x)
	\bar u(x) e_j(x)\,d\sigma(x)} & \mbox{if } I_j\neq 0,\\ \\
\displaystyle{\frac{1}{h_{j-1}+h_j}\int_{x_\Gamma^{j-1}}^{x_\Gamma^{j+1}}
	\bar u(x)\,d\sigma(x)} & \mbox{if } I_j= 0.
\end{array}
\right.
\end{equation}
According to \cite[Lemma 7.5]{Casas-Raymond2006} the function $u_h^*$ belongs to $U_{ad}^h$. Moreover, it is constructed such that $u_h^*=\bar u$ on the active set, and it fulfills \eqref{eq:u_h}.

If $\bar u\in H^s(\Gamma)$ with $s\geq 1$, we use a modification of the standard Lagrange interpolant $I_h\bar u$ of $\bar u$, again denoted by $u_h^*\in U_{ad}^h$, which is defined by its coefficients as follows
\begin{equation}\label{eq:u*2}
	u_{h,j}^*=\begin{cases}
	a & \text{if }\min_{[x_\Gamma^{j-1},x_\Gamma^{j+1}]}\bar u(x)=a,\\
	b & \text{if }\max_{[x_\Gamma^{j-1},x_\Gamma^{j+1}]}\bar u(x)=b,\\
	\bar u(x_\Gamma^j)&\text{else}  ,
	\end{cases}
\end{equation}
cf. \cite[Section 2]{casas2003error}. Of course, if we consider control problems without control constraints, that is $-a=b=\infty$, the interpolant $u_h^*$ is just the Lagrange interpolant. In case of control bounds $a,b\in\mathbb{R}$, in order to get an unique definition of $u_h^*$, we need to assume that on each element only one control bound is active. However, due to the H\"older continuity of $\bar u$, which we have for $\bar u\in H^s(\Gamma)$ with $s\geq 1$, there exists a mesh size $h_0>0$ such that for all $h<h_0$ the above definition of the interpolant is unique.
Obviously, this interpolant belongs to $U_{ad}^h$. Moreover, it satisfies \eqref{eq:u_h} by construction. Indeed, whenever $\nu\bar u(x) -\partial_n\bar\varphi(x)\neq 0$, we have $u_h^*(x)-\bar u(x)=0$.

As already announced, we conclude this section by stating a general error estimate for the control and state errors which will serve as a basis for the subsequent error analysis.
\begin{theorem}\label{T3.2} For the solution of the continuous and the
discrete optimal control problem we have
\begin{align}
\|\bar u&-\bar u_h\|_{L^2(\Gamma)}+\|\bar y-\bar y_h\|_{L^2(\Omega)}\notag\\
&\le c\left(\|\bar u- u_h^*\|_{L^2(\Gamma)}+\|\bar y-S_h\bar u\|_{L^2(\Omega)}+\sup_{\psi_h\in U_h}\frac{\left|(\nabla \bar\varphi,\nabla S_h\psi_h)_{L^2(\Omega)}\right|}{\|\psi_h\|_{L^2(\Gamma)}}\right).\label{eq:general_estimate}
\end{align}
\end{theorem}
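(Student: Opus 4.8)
The plan is to test the continuous variational inequality \eqref{E3.1} with $u=\bar u_h$ and the discrete one \eqref{VI} with $u_h=u_h^*$, add the two, and exploit the defining property \eqref{eq:u_h} of $u_h^*$ to cancel the troublesome continuous term. Concretely, from \eqref{E3.1} with $u=\bar u_h\in U_{ad}\supset U_{ad}^h$ we get $(\nu\bar u-\partial_n\bar\varphi,\bar u_h-\bar u)_{L^2(\Gamma)}\ge 0$, and from \eqref{VI} with $u_h=u_h^*\in U_{ad}^h$ we get $(\nu\bar u_h-\partial_n^h\bar\varphi_h,u_h^*-\bar u_h)_{L^2(\Gamma)}\ge 0$. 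Adding, then using \eqref{eq:u_h} to rewrite $(\nu\bar u-\partial_n\bar\varphi,\bar u_h-\bar u)=(\nu\bar u-\partial_n\bar\varphi,\bar u_h-u_h^*)$, produces after rearrangement a coercivity-type inequality
\[
\nu\|\bar u-\bar u_h\|_{L^2(\Gamma)}^2 \le (\nu\bar u-\nu\bar u_h,\bar u_h-u_h^*)_{L^2(\Gamma)} + (\partial_n\bar\varphi-\partial_n^h\bar\varphi_h,u_h^*-\bar u_h)_{L^2(\Gamma)} + \text{(lower order)},
\]
where the key gain is that the first term carries the factor $\bar u_h-u_h^*$, which we will split as $(\bar u_h-\bar u)+(\bar u-u_h^*)$ and absorb the $\|\bar u-\bar u_h\|^2$ part into the left-hand side via Young's inequality, leaving $\|\bar u-u_h^*\|_{L^2(\Gamma)}$ on the right.

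The second, genuinely PDE-analytic step is to handle the normal-derivative discrepancy $(\partial_n\bar\varphi-\partial_n^h\bar\varphi_h,u_h^*-\bar u_h)_{L^2(\Gamma)}$. Here I would use the definition \eqref{normal} of $\partial_n^h\bar\varphi_h$ together with an analogous integration-by-parts identity for the continuous normal derivative, applied to the test function $\psi_h := u_h^*-\bar u_h\in U_h$. This converts the boundary pairing into a volume pairing: $(\partial_n\bar\varphi,\psi_h)_{L^2(\Gamma)}$ becomes $(\nabla\bar\varphi,\nabla\tilde\psi_h)_{L^2(\Omega)} - (\bar y-y_\Omega,\tilde\psi_h)$ for a suitable harmonic extension, and choosing the discrete harmonic extension $S_h\psi_h$ is precisely what makes the supremum term $\sup_{\psi_h\in U_h}|(\nabla\bar\varphi,\nabla S_h\psi_h)|/\|\psi_h\|_{L^2(\Gamma)}$ appear. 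The remaining volume terms combine into $(\bar y-\bar y_h,\cdot)$ and $(\bar y-S_h\bar u,\cdot)$ contributions; using $\|\bar y-\bar y_h\|_{L^2(\Omega)}\le\|\bar y-S_h\bar u\|+\|S_h(\bar u-\bar u_h)\|$ and the uniform $L^2(\Gamma)\to L^2(\Omega)$ stability of $S_h$ (which follows from the definition \eqref{discharmonic} and the cited results of Apel--Nicaise--Pfefferer / Berggren) lets us bound $\|S_h(\bar u-\bar u_h)\|_{L^2(\Omega)}$ by $c\|\bar u-\bar u_h\|_{L^2(\Gamma)}$ and again absorb it, modulo a harmless Young's inequality.

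The last step is to recover $\|\bar y-\bar y_h\|_{L^2(\Omega)}$ on the left. Once $\|\bar u-\bar u_h\|_{L^2(\Gamma)}$ is controlled by the right-hand side of \eqref{eq:general_estimate}, the state error splits as $\|\bar y-\bar y_h\|_{L^2(\Omega)}\le\|\bar y-S_h\bar u\|_{L^2(\Omega)}+\|S_h\bar u-S_h\bar u_h\|_{L^2(\Omega)}$, and the second summand is $\le c\|\bar u-\bar u_h\|_{L^2(\Gamma)}$ by the stability of $S_h$, which is already estimated; the first summand is literally one of the three terms on the right. Collecting everything gives \eqref{eq:general_estimate}.

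I expect the main obstacle to be the bookkeeping in the second step: one must correctly pair the \emph{continuous} normal derivative $\partial_n\bar\varphi$ (which a priori lives only in a negative-order boundary space when $\Omega$ is nonconvex) against a discrete test function, insert the right extension operator so that no nonconforming boundary term is left uncontrolled, and verify that every cross term that does not fit one of the three target quantities is either a state-error term bounded by $\|\bar u-\bar u_h\|_{L^2(\Gamma)}$ via $S_h$-stability or is quadratic in $\|\bar u-\bar u_h\|_{L^2(\Gamma)}$ and hence absorbable. The algebra of adding the two variational inequalities and invoking \eqref{eq:u_h} is routine; making the normal-derivative term collapse cleanly onto $\sup_{\psi_h}|(\nabla\bar\varphi,\nabla S_h\psi_h)|/\|\psi_h\|_{L^2(\Gamma)}$ is the crux.
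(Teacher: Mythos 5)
Your skeleton coincides with the paper's: test \eqref{E3.1} with $u=\bar u_h$ and \eqref{VI} with $u_h=u_h^*$, add the two, eliminate the continuous term via \eqref{eq:u_h}, and convert the pairing with $\partial_n\bar\varphi-\partial_n^h\bar\varphi_h$ into $(\nabla\bar\varphi,\nabla S_he_h)_{L^2(\Omega)}$ plus a volume term, using \eqref{normal}, Green's identity and the discrete harmonicity of $S_he_h$ (which also annihilates $(\nabla\bar\varphi_h,\nabla S_he_h)_{L^2(\Omega)}$ because $\bar\varphi_h\in Y_{0,h}$). Up to this point the proposal is sound, and the reduction to the supremum term is exactly as in the paper.

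The gap is in how you close the estimate. The volume term produced by the integration by parts is $(\bar y_h-\bar y,S_he_h)_{L^2(\Omega)}$ with $e_h=u_h^*-\bar u_h$, and you propose to bound it by $\|\bar y-\bar y_h\|_{L^2(\Omega)}\|S_he_h\|_{L^2(\Omega)}$, then use $\|\bar y-\bar y_h\|_{L^2(\Omega)}\le\|\bar y-S_h\bar u\|_{L^2(\Omega)}+c\|\bar u-\bar u_h\|_{L^2(\Gamma)}$ together with $\|S_he_h\|_{L^2(\Omega)}\le c\|e_h\|_{L^2(\Gamma)}$ and ``absorb''. This route produces the term $c^2\|\bar u-\bar u_h\|_{L^2(\Gamma)}^2$, where $c$ is the $L^2(\Gamma)\to L^2(\Omega)$ stability constant of $S_h$; this constant bears no relation to $\nu$, so the term cannot be absorbed into $\nu\|\bar u-\bar u_h\|_{L^2(\Gamma)}^2$, and Young's inequality is of no help because both factors are the same unknown error. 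Your heuristic ``quadratic in $\|\bar u-\bar u_h\|$ and hence absorbable'' is precisely the step that fails. The missing ingredient is the exact algebraic identity exploited in \eqref{eq:generr3}: since $\bar y_h=S_h\bar u_h=S_hu_h^*-S_he_h$, one has $(\bar y_h-\bar y,S_he_h)_{L^2(\Omega)}=(S_hu_h^*-\bar y,S_he_h)_{L^2(\Omega)}-\|S_he_h\|_{L^2(\Omega)}^2$; the sign-definite term $-\|S_he_h\|_{L^2(\Omega)}^2$ is moved to the left-hand side and supplies the coercivity against which $(S_hu_h^*-\bar y,S_he_h)_{L^2(\Omega)}$ is absorbed with constant $\tfrac12$, while $\|S_hu_h^*-\bar y\|_{L^2(\Omega)}$ is a genuine right-hand-side quantity by \eqref{eq:stability}. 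This is also why the paper derives coercive control of $\|e_h\|_{L^2(\Gamma)}$ and $\|S_he_h\|_{L^2(\Omega)}$ rather than of $\|\bar u-\bar u_h\|_{L^2(\Gamma)}$ directly, and only assembles the asserted bound by the triangle inequality at the end. With this one correction your argument goes through.
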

\begin{proof}
	First, let us define the intermediate error $e_h:=u_h^*-\bar u_h$. Then, we obtain
	\begin{align}
	  \|\bar u-\bar u_h\|_{L^2(\Gamma)}+\|\bar y-\bar y_h\|_{L^2(\Omega)}&\leq \|\bar u- u_h^*\|_{L^2(\Gamma)}+\| e_h\|_{L^2(\Gamma)}\notag \\&+\|\bar y-S_h u_h^*\|_{L^2(\Omega)}+\|S_he_h\|_{L^2(\Omega)}.\label{eq:generr1}
	\end{align}
To deal with the third term, we take into account the continuity of $S_h$:
\begin{align}
\|\bar y-S_h u_h^*\|_{L^2(\Omega)}&\leq
\|\bar y-S_h \bar u\|_{L^2(\Omega)}+\|S_h (\bar u- u_h^*)\|_{L^2(\Omega)}\notag\\
&\leq \|\bar y-S_h \bar u\|_{L^2(\Omega)}+ c\| \bar
u-u_h^*\|_{L^2(\Gamma)},\label{eq:stability}\end{align}
cf. \cite[Lemma 2.3 and Corollary 3.3]{Apel-Nicaise-Pfefferer2014}.
Accordingly, we only need estimates for the second and fourth term in
\eqref{eq:generr1}. We begin with estimating the second one, but as we will see
this also yields an estimate for the fourth term. There holds
\begin{align}
    \nu\| e_h\|_{L^2(\Gamma)}^2&= \nu(u_h^*-\bar u,e_h)_{L^2(\Gamma)}+\nu(\bar u-\bar u_h,e_h)_{L^2(\Gamma)}.\label{eq:generr2}
\end{align}
  Next, we consider the second term of~\eqref{eq:generr2} in detail. By adding the continuous and discrete variational inequalities \eqref{E3.1} and \eqref{VI}  with $u=\bar u_h\in U_{ad}$ and $u_h=u_h^*\in U_{ad}^h$, respectively, we deduce
  \[
    (\nu(\bar u_h-\bar u)+\partial_n\bar \varphi-\partial_n^h\bar\varphi_h,e_h)_{L^2(\Gamma)}+(\nu\bar u-\partial_n\bar\varphi,u_h^*-\bar u)_{L^2(\Gamma)}\geq 0.
  \]
	Rearranging terms and using~\eqref{eq:u_h} leads to
	\begin{equation*}
	  \nu(\bar u-\bar u_h,e_h)_{L^2(\Gamma)}\leq (\partial_n\bar \varphi-\partial_n^h\bar\varphi_h,e_h)_{L^2(\Gamma)}.
	\end{equation*}
	Integration by parts (cf.~\cite[Lemma 3.4]{Costabel1988}) {using $e_h=S_he_h$ on $\Gamma$},~\eqref{normal},~\eqref{E3.3} and~\eqref{discharmonic} yield
	\begin{align}
	  \nu(\bar u-\bar u_h,e_h)_{L^2(\Gamma)}&\leq (\Delta \bar \varphi+(\bar y_h-y_\Omega),S_he_h)_{L^2(\Omega)}+(\nabla (\bar\varphi-\bar \varphi_h),\nabla S_he_h)_{L^2(\Omega)}\notag\\
	  &= (\bar y_h -\bar y,S_he_h)_{L^2(\Omega)}+(\nabla \bar\varphi,\nabla S_he_h)_{L^2(\Omega)}\notag\\
	  &= (S_h u_h^* -\bar y,S_he_h)_{L^2(\Omega)}-\|S_he_h\|_{L^2(\Omega)}^2+(\nabla \bar\varphi,\nabla S_he_h)_{L^2(\Omega)}\label{eq:generr3}.
	\end{align}
	By collecting the estimates~\eqref{eq:generr2} and~\eqref{eq:generr3} we obtain
	\begin{align}
	  \nu\| e_h\|_{L^2(\Gamma)}^2&+\|S_he_h\|_{L^2(\Omega)}^2\notag\\
	  \leq&\quad \nu(u_h^*-\bar u,e_h)_{L^2(\Gamma)}+(S_h u_h^* -\bar y,S_he_h)_{L^2(\Omega)}+(\nabla \bar\varphi,\nabla S_he_h)_{L^2(\Omega)}\notag\\
	  \leq &\quad\nu\|u_h^*-\bar u\|_{L^2(\Gamma)}\|e_h\|_{L^2(\Gamma)}+\|S_h u_h^* -\bar y\|_{L^2(\Omega)}\|S_he_h\|_{L^2(\Omega)}\notag\\
&+\sup_{\psi_h\in U_h}\frac{\left|(\nabla \bar\varphi,\nabla S_h\psi_h)_{L^2(\Omega)}\right|}{\|\psi_h\|_{L^2(\Gamma)}}\|e_h\|_{L^2(\Gamma)}.\label{eq:generr5}
	\end{align}
	From the Young inequality we can deduce
	\begin{align}
	  &\| e_h\|_{L^2(\Gamma)}+\|S_he_h\|_{L^2(\Omega)}\notag\\
	  &\leq c\left(\|u_h^*-\bar u\|_{L^2(\Gamma)}+\|S_h u_h^* -\bar y\|_{L^2(\Omega)}+\sup_{\psi_h\in U_h}\frac{\left|(\nabla \bar\varphi,\nabla S_h\psi_h)_{L^2(\Omega)}\right|}{\|\psi_h\|_{L^2(\Gamma)}}\right).\label{eq:generr4}
	\end{align}
	Finally, the assertion is a consequence from~\eqref{eq:generr1}, \eqref{eq:stability} and~\eqref{eq:generr4}.
\end{proof}

\section{Problems without control constraints}\label{sec:4}
In the rest of the paper, we will always assume that $\{\mathcal{T}_h\}$ is a
quasi-uniform family of meshes.
However, if the underlying mesh has a
certain structure then it is possible to improve the error estimates. These
special quasi-uniform
meshes are called superconvergence meshes or $O(h^2)$-irregular meshes; for the precise definition we refer to Definition
\ref{def:superconvergence}.
The main result of this section is the following one.
\begin{theorem}\label{main:unc}Suppose that either $\lambda < 1$
and $y_\Omega\in
{L^2(\Omega)}$, or $y_\Omega\in
W^{1,p^*}(\Omega)$ for some
$p^*>2$.
Then it holds
\begin{align}\label{T41q}
\|\bar u-\bar u_h\|_{L^2(\Gamma)}+\|\bar y-\bar y_h\|_{L^2(\Omega)} & \le ch^{s}
|\log h|^r\quad \notag \\ & \forall s\in \mathbb{R} \text{ such that }s<\lambda-\tfrac12\text{ and }s\leq 1,
\end{align}
where $r$ is equal to one for $\lambda-\tfrac12\in (1,\tfrac32]$ and equal to zero else.
If, further, $\{\mathcal{T}_h\}$ is $O(h^2)$-irregular 
according to
Definition \ref{def:superconvergence}, then
\begin{align}\label{T41s}
\|\bar u-\bar u_h\|_{L^2(\Gamma)}+\|\bar y-\bar y_h\|_{L^2(\Omega)} \le ch^{s}\quad \forall s<\min\{\tfrac32,\lambda-\tfrac12\}.
\end{align}
\end{theorem}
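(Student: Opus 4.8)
\textbf{Proof proposal for Theorem \ref{main:unc}.}
The plan is to invoke the general estimate of Theorem \ref{T3.2} and to bound each of its three terms separately, using the unconstrained regularity of Lemma \ref{L2.3} together with the expansion of the adjoint state from Lemmata \ref{L3.1} and \ref{C4.1r}. Since there are no control constraints, the interpolant $u_h^*$ of Section \ref{sec:3} is just the Lagrange interpolant $I_h\bar u$, and \eqref{eq:u_h} holds trivially. By Lemma \ref{L2.3} we have $\bar u\in H^s(\Gamma)$ for all $s<\min\{\tfrac32,\lambda-\tfrac12\}$, so standard approximation theory for the one-dimensional Lagrange interpolant on a quasi-uniform boundary mesh gives $\|\bar u-u_h^*\|_{L^2(\Gamma)}\le c h^{s}\|\bar u\|_{H^s(\Gamma)}$ for $s<\min\{\tfrac32,\lambda-\tfrac12\}$; this already meets the target rate and has no logarithmic factor (the logarithm will come only from the second and third terms).

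Next I would treat $\|\bar y-S_h\bar u\|_{L^2(\Omega)}$, which is the finite element error for the discrete-harmonic extension of $\bar u$. Since $\bar y\in H^{s+1/2}(\Omega)$ for $s<\min\{\tfrac32,\lambda-\tfrac12\}$, and $S_h$ is the scheme analysed in \cite{Apel-Nicaise-Pfefferer2014}, an $L^2(\Omega)$ error estimate of order $h^{\min\{2,s+1/2+1/2\}}=h^{\min\{2,s+1\}}$, possibly up to $|\log h|$, is available whenever the corresponding regularity is present; in the relevant range $s\le 1$ this is $o(h^s)$ and hence harmless. (On general quasi-uniform meshes the corner singularities of $\bar y$ itself, governed by $\lambda$, limit the rate to $h^{\min\{1,\lambda-1/2\}}$, which is exactly the claimed exponent; on $O(h^2)$-irregular meshes the superconvergence property upgrades this to $h^{\min\{3/2,\lambda-1/2\}}$.) The logarithmic factor $r$ appears precisely in the borderline case $\lambda-\tfrac12\in(1,\tfrac32]$, i.e.\ when $s$ can be taken equal to $1$ but not larger because of $s\le 1$, and one cannot quite reach the rate without a $|\log h|$.

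The main work, and the step I expect to be the genuine obstacle, is the consistency term $\sup_{\psi_h\in U_h}|(\nabla\bar\varphi,\nabla S_h\psi_h)_{L^2(\Omega)}|/\|\psi_h\|_{L^2(\Gamma)}$. Because $S_h\psi_h$ is discrete harmonic, $(\nabla\bar\varphi,\nabla S_h\psi_h)_{L^2(\Omega)}$ vanishes if $\bar\varphi$ is replaced by any function whose Laplacian is zero and whose trace is continuous-piecewise-linear; one therefore splits $\bar\varphi=\bar\varphi_{\reg}+\sum c_{j,m}\xi_j r_j^{m\lambda_j}\sin(m\lambda_j\theta_j)$ and estimates the contribution of the regular part by a Galerkin-orthogonality/Aubin--Nitsche argument (inserting the Ritz projection and using $\|\psi_h - (S_h\psi_h)|_\Gamma\|$-type bounds, i.e.\ the $L^2(\Gamma)$-to-$H^1(\Omega)$ stability of $S_h$ from \cite{Apel-Nicaise-Pfefferer2014}), and the contribution of each singular term by a direct computation: integrating by parts moves the Laplacian onto the singular function (which is harmonic away from the corner) so that only a boundary term on $\Gamma$ near $x_j$ survives, and there one uses that $\partial_n(r_j^{m\lambda_j}\sin(m\lambda_j\theta_j))$ behaves like $r_j^{m\lambda_j-1}$, paired against $S_h\psi_h$ and its interpolation error on the boundary. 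Collecting these bounds gives $c h^{\sigma}|\log h|^{r}\|\psi_h\|_{L^2(\Gamma)}$ with $\sigma=\min\{1,\lambda-\tfrac12\}$ on general meshes and $\sigma=\min\{\tfrac32,\lambda-\tfrac12\}$ on $O(h^2)$-irregular meshes; the delicate points are (i) keeping track of which singular exponents $m\lambda_j$ actually occur (encoded by $\mathbb{J}^m_{0,p}$, and by $\mathbb{J}^m_{1,p}$, $\mathbb{L}^m_{1,p}$ for the sharper expansion) so that the exponent is governed by $\lambda$ and not by some smaller artifact, and (ii) exploiting the cancellation that the superconvergence meshes provide to push the rate from $1$ to $3/2$, since without it the piecewise-linear boundary interpolation of $S_h\psi_h$ only gives first order. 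Once all three terms carry the factor $h^{s}|\log h|^{r}$ with $s$ in the stated range, \eqref{eq:general_estimate} yields \eqref{T41q}, and repeating the argument on superconvergence meshes yields \eqref{T41s}.
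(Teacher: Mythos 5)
Your overall skeleton matches the paper's: invoke Theorem \ref{T3.2} and bound the three terms, with the adjoint consistency term as the main difficulty. However, the proposal has genuine gaps, the most serious being your treatment of the singular part of that consistency term. You propose to integrate $(\nabla\bar\varphi_{\sing},\nabla S_h\psi_h)_{L^2(\Omega)}$ by parts so that ``only a boundary term survives'' which is then ``paired against $S_h\psi_h$ and its interpolation error on the boundary.'' This cannot work: by construction $S_h\psi_h|_\Gamma=\psi_h$ exactly, so there is no boundary interpolation error to exploit, and the surviving term $\int_\Gamma \partial_n\bar\varphi_{\sing}\,\psi_h$ with $\partial_n\bar\varphi_{\sing}\sim r_j^{\lambda_j-1}$ is merely $O(1)\|\psi_h\|_{L^2(\Gamma)}$ for $\lambda_j>\frac12$ --- no smallness at all. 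The smallness of $(\nabla\bar\varphi,\nabla S_h\psi_h)$ comes entirely from Galerkin orthogonality: one must keep the Ritz projection, write the term as $(\nabla(\bar\varphi_{\sing}-R_h\bar\varphi_{\sing}),\nabla\tilde S_h\psi_h)$ with $\tilde S_h$ the zero extension (supported in the boundary strip, so that $\|\nabla\tilde S_h\psi_h\|_{L^1(\Omega)}\le c\|\psi_h\|_{L^1(\Gamma)}$), and then estimate $\nabla(\bar\varphi_{\sing}-R_h\bar\varphi_{\sing})$ in $L^\infty$ locally on dyadic annuli around the corner via local $W^{1,\infty}$ finite element estimates combined with weighted interpolation and weighted $L^2$ Ritz-projection bounds; this is precisely where the exponent $\min\{1,\lambda-\frac12\}$ and the $|\log h|$ factor originate (Lemma \ref{L4.3}). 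For the superconvergence claim the key missing ingredient is the Bank--Xu type estimate of Lemma \ref{lemma:DGH} applied to $\bar\varphi-I_h\bar\varphi$ (splitting off the patch of elements touching the corner); ``exploiting cancellation'' is not a proof.

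Two further points. First, your claim that $u_h^*$ is the Lagrange interpolant and that standard Lagrange interpolation theory gives the control estimate fails in the nonconvex case $\lambda<1$: there $\bar u\in H^s(\Gamma)$ only for $s<\lambda-\frac12<\frac12$, so $\bar u$ need not be continuous and $I_h\bar u$ is not even defined; the paper uses the Casas--Raymond quasi-interpolant \eqref{eq:u*1} whenever $s<1$, and likewise the state error $\|\bar y-S_h\bar u\|_{L^2(\Omega)}$ cannot be handled by Aubin--Nitsche when $\bar y\notin H^1(\Omega)$. Second, you misattribute both the restriction $s\le1$ and the logarithm in \eqref{T41q} to the term $\|\bar y-S_h\bar u\|_{L^2(\Omega)}$ and suggest the superconvergence mesh upgrades that term; in fact Lemma \ref{L4.1} already yields the full rate $h^s$, $s<\min\{\frac32,\lambda-\frac12\}$, for the first two terms on general quasi-uniform meshes, and the cap at $s\le1$, the factor $|\log h|$, and the need for $O(h^2)$-irregular meshes all come exclusively from the adjoint consistency term.
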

For the proof, we are going to estimate the three terms that appear in the general estimate of Theorem \ref{T3.2}. Whereas the first two terms in \eqref{eq:general_estimate} can be estimated by standard techniques, the third one needs special care. Analogously to the derivation of \eqref{eq:generr3}, this term can formally be rewritten as
\begin{align*}
	\sup_{\psi_h\in U_h}\frac{\left|(\nabla \bar\varphi,\nabla S_h\psi_h)_{L^2(\Omega)}\right|}{\|\psi_h\|_{L^2(\Gamma)}}&=\sup_{\psi_h\in U_h}\frac{\left|(\nabla (\bar\varphi-R_h\bar\varphi),\nabla S_h\psi_h)_{L^2(\Omega)}\right|}{\|\psi_h\|_{L^2(\Gamma)}}\\
	&=\sup_{\psi_h\in U_h}\frac{\left|(\partial_n\bar
	\varphi-\partial_n^hR_h\bar\varphi,\psi_h)_{L^2(\Gamma)}\right|}{\|\psi_h\|_{L^2(\Gamma)}},
\end{align*}
where $\partial_n^hR_h\bar\varphi$ is defined as in \eqref{normal} just by
replacing $\bar y_h$ with $\bar y$ and $\bar \varphi_h$ with the
Ritz-projection $R_h\bar\varphi$ of $\bar\varphi$ on $Y_{0,h}$. Thus, we are
interested in the error between the normal derivative of the adjoint state and
the corresponding discrete normal derivative of its Ritz-projection. In order
to estimate the above term, we will pursue two different strategies. The first
one relies on local and global $W^{1,\infty}$-discretization error estimates. In
case of general quasi-uniform meshes, this will result in a convergence order
of
$O(h^{s}|\log h|^r)$ for all $s\in \mathbb{R}$ such that $s<\lambda-\tfrac12$ and $s\leq 1$,
where $r$ is equal to one for $\lambda-\tfrac12\in (1,\tfrac32]$ and equal to zero else.
The second strategy will rely on special super-convergence meshes as introduced in \cite{bank2003}. The idea to use such meshes in the context of Dirichlet boundary control problems originally stems from
\cite{Deckelnick-Gunther-Hinze2009}. In contrast to the setting in that
reference, we are not concerned with smoothly bounded domains but with
polygonal domains. For that reason we need to extend the corresponding
estimates to that case, that is, we have to deal with less regular functions
due to the appearance of corner singularities. This will yield an approximation
rate of
$O(h^s)$ with $s<\min\{\tfrac32,\lambda-\tfrac12\}$, which results in an improvement for
domains with interior angles less than $2\pi/3$.

\begin{lemma}\label{L4.1}Suppose $y_\Omega\in H^{t}(\Omega)
{\cap
L^2(\Omega)}$ for all
$t<
{\min\{1,\lambda-1\}}$. Then we have
\[
\|\bar u- u_h^*\|_{L^2(\Gamma)}+ \|\bar y -S_h\bar u\|_{L^2(\Omega)}\le c h^s\quad \forall s<\min\{\tfrac32,\lambda-\tfrac12\}.
\]
\end{lemma}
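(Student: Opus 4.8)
The plan is to estimate the two terms separately, relying on the regularity provided by Lemma \ref{L2.3} together with standard finite element approximation theory. For the second term $\|\bar y - S_h \bar u\|_{L^2(\Omega)}$, recall that by \eqref{discharmonic} the function $S_h\bar u$ is the discrete harmonic extension of the $L^2(\Gamma)$-projection of $\bar u$ onto $U_h$. By Lemma \ref{L2.3} we have $\bar y \in H^{s+1/2}(\Omega)$ for all $s < \min\{3/2,\lambda - 1/2\}$, so $\bar y$ lies in $H^{\sigma}(\Omega)$ for $\sigma$ up to $\min\{2,\lambda + 1/2\}$. I would invoke the $L^2(\Omega)$ error estimate for this nonstandard discretization from \cite{Apel-Nicaise-Pfefferer2014,Berggren2004}, which gives $\|\bar y - S_h\bar u\|_{L^2(\Omega)} \le c h^{\sigma}\|\bar y\|_{H^{\sigma}(\Omega)}$ for the appropriate range of $\sigma$; tracking the exponent, $\sigma = s$ with $s < \min\{3/2,\lambda-1/2\}$ is admissible (the cap at $3/2$ rather than $2$ comes from the boundary regularity of $\bar u$ limiting the quality of the $L^2$-projection on $\Gamma$, which feeds into the $L^2(\Omega)$ estimate). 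This term is routine given the cited results.

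For the first term $\|\bar u - u_h^*\|_{L^2(\Gamma)}$, I would split according to which interpolant is used. If $\lambda - 1/2 \le 1$, i.e. $\bar u \in H^s(\Gamma)$ with $s<1$, then $u_h^*$ is the Casas–Raymond quasi-interpolant defined in \eqref{eq:u*1}; since there are no control constraints this is in fact the $L^2(\Gamma)$-projection-type operator and one has the standard estimate $\|\bar u - u_h^*\|_{L^2(\Gamma)} \le c h^s \|\bar u\|_{H^s(\Gamma)}$ for $s \le 1$ by a Bramble–Hilbert / scaling argument on each boundary element, summing over the quasi-uniform mesh. If $\lambda - 1/2 > 1$, i.e. $\bar u \in H^s(\Gamma)$ with $s \ge 1$ (and in the unconstrained case $u_h^*$ is just the Lagrange interpolant $I_h\bar u$), then $\bar u$ is continuous and I would use the standard piecewise-linear interpolation error estimate $\|\bar u - I_h\bar u\|_{L^2(\Gamma)} \le c h^s \|\bar u\|_{H^s(\Gamma)}$, valid for $1 \le s \le 2$, which covers all $s < \min\{3/2,\lambda-1/2\}$. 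In both regimes the exponent obtained is $s < \min\{3/2,\lambda-1/2\}$, matching the claim. The logarithmic borderline $s = \lambda - 1/2$ when $\lambda - 1/2 = 3/2$ is excluded by the strict inequality, so no $|\log h|$ factor is needed here.

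The main point requiring care — rather than a deep obstacle — is making sure the right regularity is fed into each estimate: one must check that the hypothesis $y_\Omega \in H^t(\Omega)$ for all $t < \min\{1,\lambda-1\}$ is exactly what Lemma \ref{L2.3} needs to deliver $\bar u \in H^s(\Gamma)$ and $\bar y \in H^{s+1/2}(\Omega)$ for $s < \min\{3/2,\lambda-1/2\}$, and then that the generic finite element estimates apply at those Sobolev indices (in particular that $s+1/2 \le 2$, so no extra smoothness of $\bar y$ beyond $H^2$ is wasted, and that the $L^2(\Gamma)$-approximation of $\bar u$ is not limited below $\min\{3/2,\lambda-1/2\}$ by the choice of interpolant). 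Since all the hard regularity work is already contained in \cite{AMPR-2015} and the nonstandard-discretization estimates in \cite{Apel-Nicaise-Pfefferer2014,Berggren2004}, the proof is essentially an assembly of these ingredients together with classical interpolation error bounds on quasi-uniform meshes.
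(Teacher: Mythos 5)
Your handling of the control term matches the paper's proof: for $s<1$ you use the Casas--Raymond quasi-interpolant from \eqref{eq:u*1} (which in the unconstrained case indeed degenerates to a local averaging operator, since $\bar d=\nu\bar u-\partial_n\bar\varphi\equiv0$) together with the estimate behind \cite[Eq.~(7.10)]{Casas-Raymond2006}, and for $s\in[1,\tfrac32)$ the embedding $H^s(\Gamma)\hookrightarrow C^{0,s-1/2}(\Gamma)$ and the standard Lagrange interpolation bound. That part is correct and is exactly what the paper does.

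The gap is in the state term. A small slip first: Lemma \ref{L2.3} gives $\bar y\in H^{\sigma}(\Omega)$ for $\sigma<\min\{2,\lambda\}$, not $\min\{2,\lambda+\tfrac12\}$. More seriously, the bound you invoke, $\|\bar y-S_h\bar u\|_{L^2(\Omega)}\le ch^{\sigma}\|\bar y\|_{H^{\sigma}(\Omega)}$, is not what the cited references supply: an $L^2(\Omega)$ estimate with the full rate $\sigma$ presupposes an order-one duality gain, which on a polygon is limited by the regularity of the dual problem. The paper's actual argument for $\lambda\ge1$ is the energy estimate $\|\bar y-S_h\bar u\|_{H^1(\Omega)}\le ch^{s-1/2}$ followed by Aubin--Nitsche, which yields the exponent $s-\tfrac12+\min\{1,s+\tfrac12\}\ge s$; the duality gain is $\min\{1,s+\tfrac12\}$, not $1$. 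Second, and this is the genuinely missing case: for $\lambda<1$ (non-convex domains, which the lemma must cover) one has $\bar y\notin H^1(\Omega)$, so there is no energy estimate to start from and neither the ``energy plus duality'' route nor any $H^{\sigma}(\Omega)$-based interpolation argument applies; $S_h\bar u$ is then approximating only a very weak solution. The paper resolves this by invoking the dedicated very-weak-solution estimate of \cite[Remark 5.4]{Apel-Nicaise-Pfefferer2014}, which controls the $L^2(\Omega)$ error in terms of $\|\bar u\|_{H^s(\Gamma)}$ rather than a Sobolev norm of $\bar y$. Your proof needs this case distinction to be made explicit; as written, the sentence ``this term is routine given the cited results'' passes over the one step of the lemma that is not routine.
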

\begin{proof}
We know from Lemma \ref{L2.3} that the control satisfies $\bar u\in
H^{s}(\Gamma)$ for all $s<\min\{\tfrac32,\lambda-\tfrac12\}$.
If $s<1$, we choose $u_h^*$ as defined in \eqref{eq:u*1}, and the estimate for the control follows from \cite[Eq. (7.10)]{Casas-Raymond2006} by setting $s=1-\tfrac1p$ with $p\in(1,\infty)$. If $1\leq s <\tfrac32$, we have $\bar u\in H^s(\Gamma) \hookrightarrow C^{0,s-\tfrac12}(\Gamma)$ due to the Sobolev embedding theorem. Thus, the modified Lagrange interpolant $u_h^*$ from \eqref{eq:u*2} is well-defined. Actually, in the present case, $u_h^*$ is just the Lagrange interpolant. As a consequence, the error estimate for the control is given by a standard estimate for the Lagrange interpolant.

Again from Lemma \ref{L2.3}, the optimal state satisfies $\bar y\in
H^{s+\tfrac12}(\Omega)$, for all $s<\min\{\tfrac32,\lambda-\tfrac12\}$. Thus,
$\|\bar y -S_h\bar u\|_{H^1(\Omega)}\le ch^{s-\tfrac12}$ for all $s<\min\{\tfrac32,\lambda-\tfrac12\}$ if $\lambda\ge 1$. By the Aubin--Nitsche method we obtain
\begin{equation}\label{fem:unc}
\|\bar y -S_h\bar u\|_{L^2(\Omega)}\le ch^{s-\tfrac12+{\min\{1,
{s+\tfrac12}\}}}\quad
\forall s<\min\{\tfrac32,\lambda-\tfrac12\},
\end{equation}
cf. for instance \cite{Bartels2004}. Since $s+\tfrac12$ can be chosen greater than $\tfrac12$, we have the desired result in case that $\lambda\geq 1$. For $\lambda < 1$ we do not have  $\bar y \in
H^1(\Omega)$ such that standard techniques for estimating finite element errors fail. However, in this
case we can directly refer to Remark 5.4 of \cite{Apel-Nicaise-Pfefferer2014}.
\end{proof}

\begin{lemma}\label{L4.3} Suppose that either $\lambda < 1$ and $y_\Omega\in
{L^2(\Omega)}$, or $y_\Omega\in
W^{1,p^*}(\Omega)$ for some
$p^*>2$.
Then there is the estimate
\[
\sup_{\psi_h\in U_h}\frac{\left|(\nabla \bar\varphi,\nabla
S_h\psi_h)_{L^2(\Omega)}\right|}{\|\psi_h\|_{L^2(\Gamma)}}\le c
{h^s|\log h|^r\quad \forall s\in \mathbb{R} \text{ such that }s<\lambda-\tfrac12\text{ and }s\leq 1,}
\]
where $r$ is equal to one for $\lambda-\tfrac12\in (1,\tfrac32]$ and equal to zero else.
\end{lemma}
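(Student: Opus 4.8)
The plan is to reduce the bound on $\sup_{\psi_h}|(\nabla\bar\varphi,\nabla S_h\psi_h)|/\|\psi_h\|_{L^2(\Gamma)}$ to an estimate for the error between $\partial_n\bar\varphi$ and the discrete normal derivative $\partial_n^h R_h\bar\varphi$, exactly as sketched in the paragraph preceding the lemma. After inserting the Ritz projection $R_h\bar\varphi$ of $\bar\varphi$ into $Y_{0,h}$ (using Galerkin orthogonality, $(\nabla(\bar\varphi-R_h\bar\varphi),\nabla z_h)=0$ for all $z_h\in Y_{0,h}$, and that $S_h\psi_h-$its own harmonic part lies in $Y_{0,h}$), we obtain
\[
\sup_{\psi_h\in U_h}\frac{\left|(\nabla\bar\varphi,\nabla S_h\psi_h)_{L^2(\Omega)}\right|}{\|\psi_h\|_{L^2(\Gamma)}}
=\sup_{\psi_h\in U_h}\frac{\left|(\partial_n\bar\varphi-\partial_n^hR_h\bar\varphi,\psi_h)_{L^2(\Gamma)}\right|}{\|\psi_h\|_{L^2(\Gamma)}}
\le c\,\|\partial_n\bar\varphi-\partial_n^hR_h\bar\varphi\|_{L^2(\Gamma)}.
\]
So the task becomes: bound the $L^2(\Gamma)$-error of the discrete normal derivative of the Ritz projection. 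The strategy is the $W^{1,\infty}$-based one: I would write $\|\partial_n\bar\varphi-\partial_n^hR_h\bar\varphi\|_{L^2(\Gamma)}\le |\Gamma|^{1/2}\|\partial_n\bar\varphi-\partial_n^hR_h\bar\varphi\|_{L^\infty(\Gamma)}$ and then relate $\partial_n^hR_h\bar\varphi$ to $\partial_n(I_h\bar\varphi)$ or directly estimate using the definition \eqref{normal}. The classical fact (going back to work on pointwise FEM estimates, e.g.\ Schatz--Wahlbin, and adapted to the discrete normal derivative as in \cite{Apel-Nicaise-Pfefferer2014}) is that $\|\partial_n\bar\varphi-\partial_n^hR_h\bar\varphi\|_{L^\infty(\Gamma)}$ is controlled by $\|\bar\varphi-R_h\bar\varphi\|_{W^{1,\infty}(\Omega)}$ up to logarithmic factors, and the latter by a best-approximation quantity $\inf_{\chi_h}\|\bar\varphi-\chi_h\|_{W^{1,\infty}(\Omega)}$ with a $|\log h|$ loss on quasi-uniform meshes.

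The core of the argument is then a weighted/local $W^{1,\infty}$ interpolation estimate for $\bar\varphi$ exploiting its singular decomposition from Lemma \ref{C4.1r} (valid here because either $\lambda<1$ and $y_\Omega\in L^2$, covered via Lemma \ref{L3.1}, or $y_\Omega\in W^{1,p^*}$ with $p^*>2$, which puts us in the hypotheses of Lemma \ref{C4.1r}). The regular part $\bar\varphi_{\reg}\in W^{3,p}(\Omega)\hookrightarrow W^{2,\infty}(\Omega)$ for suitable $p>2$ contributes $O(h)$ in $W^{1,\infty}$ and hence $O(h)$ on $\Gamma$, which is consistent with the cap $s\le 1$. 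Each singular term $\xi_j r_j^{m\lambda_j}\sin(m\lambda_j\theta_j)$ (and the analogous log-terms) has gradient behaving like $r_j^{m\lambda_j-1}$ near $x_j$; its nodal interpolation error in $W^{1,\infty}$ on a quasi-uniform mesh is of order $h^{\min\{1,m\lambda_j-1\}}$ — or $h\,|\log h|$ in the borderline case $m\lambda_j-1\in(1,3/2]$, which is where the exponent $r$ in the statement enters — and on the boundary $\Gamma_j\cup\Gamma_{j-1}$ the normal derivative of such a term behaves like $r_j^{m\lambda_j-1}$ as well, so it is handled by the same one-dimensional interpolation estimate. The term driving the worst case is $m=1$ at the corner realizing $\lambda=\min_j\lambda_j$, giving the rate $h^{\lambda-1/2}$; one must check that the extra $1/2$ gain over $\lambda-1$ comes from the fact that $\bar u$, hence the right-hand side data on $\Gamma$, is smoother than $r^{\lambda-1}$ would suggest, or alternatively from a duality/trace argument improving the $L^\infty(\Gamma)$ estimate to the stated $H^s(\Gamma)$-type rate. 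Concretely I expect one combines the $W^{1,\infty}(\Omega)$ interior estimate with a sharper local estimate near the corners using the explicit singular functions, and then passes to $L^2(\Gamma)$.

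The main obstacle is the sharp handling of the corner singularities in the $W^{1,\infty}$ (or $L^\infty(\Gamma)$) setting: standard global $W^{1,\infty}$ FEM estimates lose a full $|\log h|$ and, more seriously, are only valid when the solution is in $W^{1,\infty}$, which fails when $m\lambda_j<1$ (nonconvex corners, $\lambda<1$). In that regime $\nabla\bar\varphi$ is not bounded near $x_j$, so one cannot use a plain global maximum-norm estimate; instead one must use weighted pointwise estimates (Schatz--Wahlbin-type estimates with weights $r_j^{\alpha}$, or the localized estimates of \cite{Apel-Nicaise-Pfefferer2014}) to show that the \emph{trace} of the discrete normal derivative error on $\Gamma$ is still well-behaved, because on $\Gamma$ near $x_j$ the angular factor $\sin(m\lambda_j\theta_j)$ or its derivative structure keeps $\partial_n$ of the singular function integrable and in fact in $L^2(\Gamma)$ with the right rate. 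Reconciling the exponent: the boundary term $\psi_h$ is tested, and the estimate is measured in $L^2(\Gamma)$ rather than $L^\infty$, which is exactly what buys the half power and yields the stated $s<\lambda-1/2$; making this rigorous — tracking that the duality pairing on $\Gamma$ against $\psi_h\in U_h$ together with an inverse inequality and the $W^{1,\infty}$-error bound on the adjoint yields $h^{\lambda-1/2}$ and not merely $h^{\lambda-1}$ — is the delicate bookkeeping step. The log factor $r=1$ on the interval $\lambda-1/2\in(1,3/2]$, i.e.\ $\lambda\in(3/2,2]$, will come precisely from the borderline interpolation estimate for the leading singular term $r^{\lambda}$ whose second derivatives scale like $r^{\lambda-2}$ and are not quite in $L^\infty$, forcing the logarithmic loss in the $W^{2,p}\to W^{1,\infty}$ embedding as $p\to\infty$.
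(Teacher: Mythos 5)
Your overall architecture (insert the Ritz projection, split $\bar\varphi$ into regular and singular parts via Lemma \ref{C4.1r}, treat the regular part with a global $W^{1,\infty}$ estimate) matches the paper, and you correctly identify the two real difficulties: obtaining $\lambda-\tfrac12$ rather than $\lambda-1$, and the failure of $\nabla\bar\varphi\in L^\infty(\Omega)$ at nonconvex corners. But the step you defer as ``delicate bookkeeping'' is the actual content of the proof, and the route you sketch for it does not work. Passing to $\|\partial_n\bar\varphi-\partial_n^hR_h\bar\varphi\|_{L^2(\Gamma)}\le|\Gamma|^{1/2}\|\partial_n\bar\varphi-\partial_n^hR_h\bar\varphi\|_{L^\infty(\Gamma)}$ and controlling this by $\|\bar\varphi-R_h\bar\varphi\|_{W^{1,\infty}(\Omega)}$ caps the rate at the $W^{1,\infty}$ interpolation error of $r^{\lambda}$, which is $h^{\lambda-1}$, not $h^{\lambda-1/2}$; measuring the boundary error in $L^2(\Gamma)$ instead of $L^\infty(\Gamma)$ does not by itself recover the half power once the structure of the test function has been discarded. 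The paper's mechanism is different: it never leaves the volume pairing. Writing $(\nabla\bar\varphi,\nabla S_h\psi_h)_{L^2(\Omega)}=(\nabla(\bar\varphi-R_h\bar\varphi),\nabla\tilde S_h\psi_h)_{L^2(\Omega)}$ with $\tilde S_h\psi_h$ the extension of $\psi_h$ by zero, one exploits that $\tilde S_h\psi_h$ is supported in an $O(h)$-strip along $\Gamma$, whence $\|\nabla\tilde S_h\psi_h\|_{L^1(\Omega)}\le c\|\psi_h\|_{L^1(\Gamma)}$ with \emph{no} negative power of $h$ (this is \eqref{eq:local1} with $q=1$). Then a dyadic decomposition $\{\Omega_J\}$ around the critical corner, local $W^{1,\infty}$ estimates on each annulus, the weights $\sigma^{\pm1/2}$, the weighted Ritz-projection bound $\|\sigma^{-1/2}(\bar\varphi_{\sing}-R_h\bar\varphi_{\sing})\|_{L^2(\Omega)}\le ch^{\min\{2,\lambda+1/2\}}|\log h|^{1/2}$ from \cite[Corollary 3.62]{pfefferer2014}, and $\|\sigma^{-1/2}\|_{L^2(\Gamma)}\le c|\log h|^{1/2}$ combine to give $h^{\min\{1,\lambda-1/2\}}|\log h|$. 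This weighted ingredient --- which is also the true source of the logarithm for $\lambda\in(\tfrac32,2]$, rather than the $W^{2,p}\hookrightarrow W^{1,\infty}$ embedding you invoke --- is absent from your proposal and cannot be reconstructed from the tools you list.

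For the nonconvex case your plan is both incomplete and heavier than necessary. No weighted pointwise Schatz--Wahlbin estimates are needed: since there $s<\lambda-\tfrac12<\tfrac12$, the paper simply applies the Cauchy--Schwarz inequality in $L^2(\Omega)$, the standard energy estimate $\|\nabla(\bar\varphi-R_h\bar\varphi)\|_{L^2(\Omega)}\le ch^{s'}$ for $s'<\lambda$, and $\|\nabla\tilde S_h\psi_h\|_{L^2(\Omega)}\le ch^{-1/2}\|\psi_h\|_{L^2(\Gamma)}$ from \eqref{eq:local1}, which already yields the claimed $h^{\lambda-1/2-\varepsilon}$. In summary: right skeleton, but the decisive quantitative step (the strip-support and weighted-norm argument near the corner) is missing, and the fallback you propose in its place would prove only $s<\lambda-1$.
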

\begin{proof}
As above, we denote by $R_h$ the operator that maps a function of $H^1_0(\Omega)$ to its Ritz-projection in $Y_{0,h}$. In addition, we introduce the extension operator $\tilde S_h$ which extends a function belonging to $U_h$ to one in $Y_h$ by zero. Using the norm equivalence in finite dimensional spaces on a reference domain we easily infer for any $\psi_h\in U_h$ and $q\in[1,\infty]$
\begin{equation}\label{eq:local1}
{\|\tilde S_h \psi_h\|_{L^q(\Omega)}+h\|\nabla\tilde S_h
	\psi_h\|_{L^q(\Omega)}\leq c h^{1/q}\|\psi_h\|_{L^q(\Gamma)}.}
\end{equation}
Since $S_h\psi_h$ is discrete harmonic, we obtain together with the orthogonality properties of the Ritz-projection the identity
\begin{align}
(\nabla \bar\varphi,\nabla S_h\psi_h)_{L^2(\Omega)}&=(\nabla (\bar\varphi-R_h\bar \varphi),\nabla S_h\psi_h)_{L^2(\Omega)}
=(\nabla (\bar\varphi-R_h\bar \varphi),\nabla \tilde S_h\psi_h)_{L^2(\Omega)},\label{eq:local3}
\end{align}
where we employed that $(S_h-\tilde S_h)\psi_h$ belongs to $Y_{0,h}$.

Now, we distinguish the three cases $\omega_i<\pi/2$, $\omega_i<\pi$ and
$\omega_i<2\pi$ for $i=1,\ldots,M$.

In the first one, we know from Lemma \ref{C4.1r}
that the optimal adjoint state belongs to $W^{3,q}(\Omega)$ (for some $q>2$), which is continuously embedded in $W^{2,\infty}(\Omega)$. Consequently, a global $W^{1,\infty}$-discretization error estimate from e.g. \cite{rannacher1982,demlow2012}, and \eqref{eq:local1} yield
\begin{equation}\label{eq:local4}
(\nabla (\bar\varphi-R_h\bar \varphi),\nabla \tilde S_h\psi_h)_{L^2(\Omega)}\leq \|\nabla (\bar\varphi-R_h\bar \varphi)\|_{L^\infty(\Omega)}\|\nabla \tilde S_h\psi_h\|_{L^1(\Omega)}\leq ch\|\psi_h\|_{L^1(\Gamma)},
\end{equation}
which represents, together with \eqref{eq:local3} and the embedding $L^2(\Gamma)\hookrightarrow L^1(\Gamma)$, the desired result for $\omega_i<\pi/2$, $i=1,\ldots,M$.

Next, we consider the case
$\omega_i<\pi$ for $i=1,\ldots,M$. For simplicity, we assume that the
domain has only one corner with an interior angle greater or equal to $\pi/2$.
However, the proof extends to the general case in a natural way. In the
following, that corner is located at the origin. Furthermore, we denote its
interior angle by $
{\omega_1}$,
the distance to that corner by $r_1$,
and the corresponding
leading singular exponent by ${\lambda_1=\pi/\omega_1}$. According to
Lemma \ref{C4.1r}, the optimal adjoint state admits the splitting
\begin{equation}\label{eq:local2}
\bar\varphi=\bar{\varphi}_{\reg}+\bar\varphi_{\sing},
\end{equation}
where $\bar{\varphi}_{\reg}$ belongs to $W^{3,q}(\Omega)$ with some $q>2$. Combining \eqref{eq:local3} and \eqref{eq:local2} yields the identity
\begin{align}
	(\nabla (\bar\varphi-R_h\bar \varphi),\nabla \tilde S_h\psi_h)_{L^2(\Omega)}&=(\nabla (\bar\varphi_{\sing}-R_h\bar \varphi_{\sing}),\nabla \tilde S_h\psi_h)_{L^2(\Omega)}\notag\\
	&\quad+(\nabla (\bar\varphi_{\reg}-R_h\bar \varphi_{\reg}),\nabla \tilde S_h\psi_h)_{L^2(\Omega)}.\label{eq:local13}
\end{align}
For the latter term, we can argue as in \eqref{eq:local4} to show first order convergence, i.e.,
\begin{equation}\label{eq:local12}
(\nabla (\bar\varphi_{\reg}-R_h\bar \varphi_{\reg}),\nabla \tilde S_h\psi_h)_{L^2(\Omega)}\leq ch\|\psi_h\|_{L^1(\Gamma)}\leq ch\|\psi_h\|_{L^2(\Gamma)}.
\end{equation}
In order to estimate the singular term, we decompose the neighborhood of the critical corner in subdomains $\Omega_J$ which are defined by
\begin{align*}
\Omega_I&:=\{x:|x|\leq d_I\}\quad\text{and}\quad
\Omega_J:=\{x:d_{J+1}\leq |x|\leq d_J\}\quad \text{for }J=I-1,\ldots,1.
\end{align*}
We set the radii $d_J$ equal to $2^{-J}$ and choose the index $I$ in such a way that $d_I=2^{-I}=c_Ih$ with a constant $c_I$. Below, this constant is chosen large enough such that on the one hand local $W^{1,\infty}$-finite element error estimates from \cite[Corollary 1]{demlow2012} are applicable on the strips $\Omega_J$, see \eqref{eq:local6}, and on the other hand the validity of the weighted error estimate \eqref{eq:local10} is guaranteed. Moreover, we set
\[
	\Omega_0:=\Omega\backslash\Omega_R \quad \text{with} \quad \Omega_R:=\bigcup_{J=1}^I\Omega_J
\]
and
\[
	\Omega_J':=\Omega_{J-1}\cup\Omega_{J}\cup\Omega_{J+1}
\]
with the obvious modifications for $J=0$ and $J=I$. Using this kind of covering, we obtain
\begin{align}
	(\nabla (\bar\varphi_{\sing}-R_h\bar \varphi_{\sing}),&\nabla \tilde S_h\psi_h)_{L^2(\Omega)}=\sum_{J=0}^I(\nabla (\bar\varphi_{\sing}-R_h\bar \varphi_{\sing}),\nabla \tilde S_h\psi_h)_{L^2(\Omega_J)}\notag\\
	&\leq \sum_{J=0}^I\|\nabla (\bar\varphi_{\sing}-R_h\bar \varphi_{\sing})\|_{L^\infty(\Omega_J)}\|\nabla \tilde S_h\psi_h\|_{L^1(\Omega_J)}.\label{eq:local5}
\end{align}
Arguing as in \eqref{eq:local1}, we get
\begin{equation}\label{eq:local7}
 \|\nabla \tilde S_h\psi_h\|_{L^1(\Omega_J)}\leq c\|\psi_h\|_{L^1(\partial\Omega_J'\cap \Gamma)}.
\end{equation}
Having chosen the constant $c_I$ large enough, local $W^{1,\infty}$-error estimates from \cite[Corollary 1]{demlow2012} yield
\begin{align}
&\|\nabla (\bar\varphi_{\sing}-R_h\bar \varphi_{\sing})\|_{L^\infty(\Omega_J)}\notag\\ &\leq c\left(\|\nabla(\bar\varphi_{\sing}-I_h\bar{\varphi}_{\sing})\|_{L^\infty(\Omega_J')}+d_J^{-2}\|\bar\varphi_{\sing}-R_h\bar\varphi_{\sing}\|_{L^2(\Omega_J')}\right),\label{eq:local6}
\end{align}
where $I_h\bar{\varphi}$ denotes the Lagrange interpolant of $\bar{\varphi}$.
Notice, according to \cite[Remark 2]{demlow2012}, this inequality is only valid for any $J=0,\ldots,I-2$ if the domain $\Omega$ is non-convex, i.e. $\omega_1>\pi$. Now, let $\sigma:=r_1+d_I$, which possesses the properties $\sigma\sim d_J$ for $x\in \bar\Omega_J'$ and $\min_{x\in\Omega}\sigma\sim h$. By combining \eqref{eq:local5}--\eqref{eq:local6}, we infer
\begin{align}
	&(\nabla (\bar\varphi_{\sing}-R_h\bar \varphi_{\sing}),\nabla \tilde S_h\psi_h)_{L^2(\Omega)}\notag\\
	&\leq c\sum_{J=0}^I\left(\|\nabla(\bar\varphi_{\sing}-I_h\bar{\varphi}_{\sing})\|_{L^\infty(\Omega_J')}+d_J^{-2}\|\bar\varphi_{\sing}-R_h\bar\varphi_{\sing}\|_{L^2(\Omega_J')}\right)\|\psi_h\|_{L^1(\partial\Omega_J'\cap \Gamma)}\notag\\
	&\leq c\left(\!\|\sigma^{1/2}\nabla(\bar\varphi_{\sing}\!-\!I_h\bar{\varphi}_{\sing})\|_{L^\infty(\Omega)}\!+\!h^{-1}\|\sigma^{-1/2}(\bar\varphi_{\sing}\!-\!R_h\bar\varphi_{\sing})\|_{L^2(\Omega)}\!\right)\!\|\sigma^{-1/2}\psi_h\|_{L^1(\Gamma)}.\label{eq:local8}
\end{align}
The second derivatives of the singular part $\bar{\varphi}_{\sing}$ behave like
$r^{{\lambda}-2}$ {for $\lambda\not=2$ and} like $\log r$ if ${\lambda}=2$, cf.
Lemma \ref{C4.1r}. Thus, by using standard interpolation error estimates (on
the strips $\Omega_J$), we get
for $\lambda_1>1$, hence for
$\omega_1<\pi$,
\begin{equation}\label{eq:local9}
	\|\sigma^{1/2}\nabla(\bar\varphi_{\sing}-I_h\bar{\varphi}_{\sing})\|_{L^\infty(\Omega)}\leq
	 c h^{\min\{1,\lambda-1/2\}}.
\end{equation}
From \cite[Corollary 3.62]{pfefferer2014} {(setting $\tau=\frac12$ and $\gamma=2-\lambda$ there)} we know that for $c_I$ large enough there holds
\begin{equation}\label{eq:local10}
	\|\sigma^{-1/2}(\bar\varphi_{\sing}-R_h\bar\varphi_{\sing})\|_{L^2(\Omega)}\leq
	 c h^{\min\{2,\lambda+1/2\}}|\log h|^{1/2}.
\end{equation}
Notice that in that reference problems with Neumann boundary conditions are considered. However, the proof for the present problem is just a word by word repetition. Next, the Cauchy-Schwarz inequality and basic integration yield
\begin{equation}\label{eq:local11}
	\|\sigma^{-1/2}\psi_h\|_{L^1(\Gamma)}\leq \|\sigma^{-1/2}\|_{L^2(\Gamma)}\|\psi_h\|_{L^2(\Gamma)}\leq c|\log h|^{1/2}\|\psi_h\|_{L^2(\Gamma)}.
\end{equation}
By collecting the results from \eqref{eq:local8}--\eqref{eq:local11}, we obtain
\begin{equation}\label{eq:local14}
	(\nabla (\bar\varphi_{\sing}-R_h\bar \varphi_{\sing}),\nabla \tilde
	S_h\psi_h)_{L^2(\Omega)}\leq c  h^{\min\{1,\lambda-1/2\}}|\log
	h|\,\|\psi_h\|_{L^2(\Gamma)},
\end{equation}
which yields together with \eqref{eq:local12}, \eqref{eq:local13} and \eqref{eq:local3} the assertion in the second case.

Finally, we consider the case $\omega_i<2\pi$ for $i=1,\ldots,M$. Similar
to the foregoing considerations, we assume that only the angle $\omega_1$ is
greater or equal to $\pi$ and hence $1/2<\lambda_1\leq1$. According to
\eqref{eq:local3}, the Cauchy-Schwarz inequality, \eqref{eq:local1},
and a standard finite element error estimate, we obtain
\begin{align*}
(\nabla \bar\varphi,\nabla S_h\psi_h)_{L^2(\Omega)}&=(\nabla (\bar\varphi-R_h\bar \varphi),\nabla \tilde S_h\psi_h)_{L^2(\Omega)}\\&\leq \|\nabla (\bar\varphi-R_h\bar \varphi)\|_{L^2(\Omega)}\|\nabla \tilde S_h\psi_h\|_{L^2(\Omega)}\\
&\leq c h^{
{s}}\|\psi_h\|_{L^2(\Gamma)}
\end{align*}
for all $s<\lambda_1-1/2$. This ends the proof.
\end{proof}

\begin{remark}
Related results to those of Lemma \ref{L4.3}, which are
established by using similar techniques, can be found in
\cite{apel2015finite,horger2013optimal,melenk2012quasi,pfefferer2014}.
\end{remark}

According to the previous lemma, the critical term in the general estimate \eqref{eq:general_estimate} converges with an order close to one provided that the interior angles are less $2\pi/3$. However, it is possible to improve the convergence rate if we assume a certain structure of the underlying mesh. The following definition for superconvergence meshes can be found in \cite{bank2003}. Those have been used in \cite{Deckelnick-Gunther-Hinze2009} in the context of Dirichlet boundary control problems in the case of smoothly bounded domains.
\begin{definition}\label{def:superconvergence}
	The triangulation $\mathcal{T}_h$ is called to be $O(h^{2\sigma})$-irregular if the following conditions hold:
	\begin{enumerate}
		\item The set of interior edges $\mathcal{E}$ of the triangulation $\mathcal{T}_h$ is decomposed into two disjoint sets $\mathcal{E}_1$ and $\mathcal{E}_2$ which fulfill the following properties:
		\begin{itemize}
			\item For each $e\in\mathcal{E}_1$, let $T$ and $T'$ denote the two elements of the triangulation $\mathcal{T}_h$ that share this edge $e$. Then the lengths of any two opposite edges of the quadrilateral $T\cup T'$ differ only by $O(h^2)$.
			\item $\sum_{e\in\mathcal{E}_2}(|T|+|T'|)=O(h^{2\sigma})$.
		\end{itemize}
		\item The set of the boundary vertexes $\mathcal{P}$ is decomposed into two disjoint set $\mathcal{P}_1$ and $\mathcal{P}_2$ which satisfy the following properties:
		\begin{itemize}
			\item For each vertex $x\in\mathcal{P}_1$, let $e$ and $e'$ be the two boundary edges sharing this vertex as an endpoint. Denote by $T$ and $T'$ the elements having $e$ and $e'$, respectively, as edges and let $t$ and $t'$ be the corresponding unit tangents. Furthermore, take $e$ and $e'$ as one pair of corresponding edges, and make a clockwise traversal of $\partial T$ and $\partial T'$ to define two additional corresponding edge pairs. Then $|t-t'|=O(h)$ and the lengths of any two corresponding edges only differ by $O(h^2)$.
			\item $|\mathcal{P}_2|=c$ with a constant $c$ independent of $h$.
		\end{itemize}
	\end{enumerate}
\end{definition}

Next, let us recall {a} result from \cite[Lemma 5.2]{Deckelnick-Gunther-Hinze2009}, which leads us to Lemma~\ref{L4.4}.

\begin{lemma}\label{lemma:DGH}
Let $\Omega_h$ be any polygonal domain with boundary $\Gamma_h$. Suppose that the triangulation $\mathcal{T}_h$ of $\Omega_h$ is $O(h^{2\sigma})$ irregular and let $f\in W^{3,q}(\Omega_h)$ for some $q>2$. Then for any $\phi_h\in Y_h$ there holds
\[
	\left|\int_{\Omega_h} \nabla(f-I_hf)\cdot\nabla \phi_h\right|\leq c\|f\|_{W^{3,q}(\Omega_h)}\left(h^{1+\min\{1,\sigma\}}\|\phi_h\|_{H^1(\Omega_h)}+h^{3/2}\|\phi_h\|_{L^2(\Gamma_h)}\right),
\]
where $I_h f\in Y_h$ denotes the piecewise linear Lagrange interpolant.
\end{lemma}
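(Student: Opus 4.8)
The plan is to follow the Bank--Xu supercloseness mechanism, in the form used for Dirichlet control in \cite{Deckelnick-Gunther-Hinze2009}. Since $\nabla\phi_h$ is constant on each element, elementwise integration by parts gives
\[
\int_{\Omega_h}\nabla(f-I_hf)\cdot\nabla\phi_h
=\sum_{T\in\mathcal{T}_h}\nabla\phi_h|_T\cdot\int_{\partial T}(f-I_hf)\,n_T\,ds ,
\]
and, grouping by edges and using that $f-I_hf$ vanishes at all vertices, this equals
\[
\sum_{e\in\mathcal{E}}\big([\nabla\phi_h]_e\cdot n_e\big)\!\int_e(f-I_hf)\,ds
\;+\;\sum_{e\subset\Gamma_h}\big(\nabla\phi_h|_{T_e}\cdot n_e\big)\!\int_e(f-I_hf)\,ds ,
\]
with $[\nabla\phi_h]_e$ the jump across the interior edge $e$. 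On an edge $e$ of length $\ell_e$ and midpoint $m_e$, the Peano kernel representation of the one-dimensional linear interpolation error of $f|_e$ gives $\int_e(f-I_hf)\,ds=\kappa\,\ell_e^{3}\,\partial_t^{2}f(m_e)+R_e$ with an absolute constant $\kappa$, tangential derivative $\partial_t$, and $|R_e|\le c\,\ell_e^{3+\alpha}[\partial_t^{2}f]_{C^{0,\alpha}(e)}$, where $\alpha=1-2/q>0$; here we use the Morrey embedding $W^{3,q}(\Omega_h)\hookrightarrow C^{2,\alpha}(\overline{\Omega_h})$ (valid since $q>2$) to bound all occurring derivatives of $f$ by $c\|f\|_{W^{3,q}(\Omega_h)}$.

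\textbf{Easy terms.} The remainders $R_e$, and the entire contribution of the ``bad'' interior edge set $\mathcal{E}_2$ --- whose incident triangles cover area $O(h^{2\sigma})$, hence $\#\mathcal{E}_2=O(h^{2\sigma-2})$ --- are estimated directly, using Cauchy--Schwarz, the elementary identity $\|\nabla\phi_h\|_{L^2(\partial T)}\le c\,h^{-1/2}\|\nabla\phi_h\|_{L^2(T)}$ (as $\nabla\phi_h$ is constant on $T$), and standard inverse estimates. This yields $O(h^{1+\alpha})\|\phi_h\|_{H^1(\Omega_h)}$ for the remainders and $O(h^{1+\sigma})\|\phi_h\|_{H^1(\Omega_h)}$ for $\mathcal{E}_2$; if $\alpha<\sigma$ one moreover subjects the $R_e$ to the same pairing as in the next step to recover the missing powers, so that this whole group is bounded by $c\,h^{1+\min\{1,\sigma\}}\|f\|_{W^{3,q}(\Omega_h)}\|\phi_h\|_{H^1(\Omega_h)}$.

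\textbf{The main point: cancellation on $\mathcal{E}_1$.} It remains to estimate $\kappa\sum_{e\in\mathcal{E}_1}\big([\nabla\phi_h]_e\cdot n_e\big)\,\ell_e^{3}\,\partial_t^{2}f(m_e)$. A single term is $O(h^{3})|[\nabla\phi_h]_e|$, and a termwise sum would only give $O(h)$, so the issue is to extract an extra power of $h$. For $e\in\mathcal{E}_1$ the two triangles sharing $e$ form a quadrilateral whose opposite edges are parallel and of equal length up to $O(h^{2})$; one re-sums the leading terms along the two families of nearly parallel edges of these quadrilaterals into discrete second differences of the piecewise-constant field $\nabla\phi_h$, which telescope along chains of near-parallelograms, the parallelogram defect being $O(h^{2})$ in relative size; using in addition the $C^{0,\alpha}$-continuity of $\partial_t^{2}f$ to identify the midpoint samples $\partial_t^{2}f(m_e)$ up to $O(h^{\alpha})$, one arrives at a bound of the form $c\,h^{2}\|f\|_{W^{3,q}(\Omega_h)}\|\phi_h\|_{H^1(\Omega_h)}$ for this sum. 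I expect this reorganization --- with the careful bookkeeping of edge orientations that turns a nonzero $O(h)$ quantity into a genuine $O(h^{2})$ one --- to be the technical heart of the proof, and the place where the precise form of Definition~\ref{def:superconvergence} is really used.

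\textbf{The boundary term.} On $e\subset\Gamma_h$ one has $\nabla\phi_h|_{T_e}\cdot n_e=\partial_n\phi_h|_{T_e}$, constant along $e$, so the boundary sum is $\int_{\Gamma_h}(f-I_hf)\,\partial_n\phi_h\,ds$. Cauchy--Schwarz with $\|f-I_hf\|_{L^2(\Gamma_h)}\le c\,h^{2}\|f\|_{W^{3,q}(\Omega_h)}$ and $\|\partial_n\phi_h\|_{L^2(\Gamma_h)}\le c\,h^{-1/2}\|\phi_h\|_{H^1(\Omega_h)}$ already produces $c\,h^{3/2}\|f\|_{W^{3,q}(\Omega_h)}\|\phi_h\|_{H^1(\Omega_h)}$; obtaining the sharper form with $\|\phi_h\|_{L^2(\Gamma_h)}$ in place of $\|\phi_h\|_{H^1(\Omega_h)}$ requires exploiting the $\mathcal{P}_1$ condition --- consecutive boundary edges nearly collinear and of nearly equal length, with incident triangles congruent up to $O(h^{2})$ --- to pair the two boundary edges meeting at each vertex of $\mathcal{P}_1$, so that after pairing the residual is controllable by boundary data of $\phi_h$ alone, the finitely many vertices of $\mathcal{P}_2$ contributing only lower-order terms. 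I expect this boundary pairing to be, after the previous step, the second delicate point. Collecting the bounds from the last three paragraphs and inserting them into the identity of the first gives the asserted estimate.
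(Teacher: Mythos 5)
First, a remark on the comparison itself: the paper does not prove this lemma at all --- it is imported verbatim as \cite[Lemma 5.2]{Deckelnick-Gunther-Hinze2009}, which in turn rests on the superconvergence analysis of \cite{bank2003}. So what you have written is a from-scratch reconstruction of that literature's argument, and it has to be judged on its own. Your skeleton is the right one (elementwise integration by parts using $\Delta\phi_h|_T=0$, reduction to edge integrals of $f-I_hf$ weighted by normal jumps, Peano kernel on each edge, splitting $\mathcal{E}_1/\mathcal{E}_2$, separate treatment of the boundary), and your bound for the $\mathcal{E}_2$ contribution is correct. But the two steps you yourself flag as ``the technical heart'' and ``the second delicate point'' are exactly the steps that constitute the proof, and they are not carried out --- and, more seriously, the mechanism you propose for the first one does not appear to be the one that works.

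Concretely: after your reduction, the leading interior term is $\sum_{e\in\mathcal{E}_1}([\nabla\phi_h]_e\cdot n_e)\,\ell_e^3\,\partial_t^2 f(m_e)$. The jump of the \emph{normal} derivative across an edge is an $O(1)$ multiple of the local gradient and has no reason to telescope along chains of quadrilaterals; moreover the weight $\ell_e^3\partial_t^2f(m_e)$ oscillates between the (three) edge-direction classes from one edge to the next, so any attempted summation by parts of the jumps against this weight reintroduces an $O(h^{-1})$ factor and lands you back at $O(h)$. (A sanity check: on a uniform three-directional mesh with $f$ quadratic, grouping by direction class reduces your sum to quantities like $n_d\cdot\sum_T(\pm\nabla\phi_h|_T)$, which are not $O(h^2)\|\phi_h\|_{H^1}$ termwise.) The actual Bank--Xu cancellation is \emph{edge-local} and organized differently: one keeps the full per-triangle edge error representation, in which $\phi_h$ enters through its \emph{tangential} derivative $\partial_{t_e}\phi_h$ --- single-valued across $e$ --- multiplied by explicit geometric coefficients of $T$ and $T'$; the $O(h^2)$-parallelogram property forces these coefficients to agree to relative order $O(h)$, which is where the extra power of $h$ comes from, edge by edge, with no global telescoping. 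Your normal-jump formulation, while an exact identity, discards precisely the structure that makes this local cancellation visible. Two further gaps: (i) bounding the Peano remainders by $[\partial_t^2f]_{C^{0,\alpha}}$ with $\alpha=1-2/q$ only gives $O(h^{1+\alpha})$, which for $q$ close to $2$ falls short of $h^{1+\min\{1,\sigma\}}$; the remainders have no structure to ``pair'', and the correct treatment uses $L^q$-based bounds on third derivatives over element patches with scaled trace inequalities and H\"older summation. (ii) The boundary term: your crude bound produces $h^{3/2}\|\phi_h\|_{H^1(\Omega_h)}$, whereas the lemma needs $h^{3/2}\|\phi_h\|_{L^2(\Gamma_h)}$, and the pairing of consecutive boundary edges at $\mathcal{P}_1$-vertices that achieves this is again only asserted. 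As it stands the proposal proves only an $O(h)$ bound plus a statement of intent for the rest.
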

\begin{lemma}\label{L4.4}
Suppose that either $\lambda < 1$ and $y_\Omega\in
	L^2(\Omega)$, or $y_\Omega\in
	W^{1,p^*}(\Omega)$ for some
	$p^*>2$. Suppose further that $\{\mathcal{T}_h\}$ is a family of
$O(h^2)$-irregular meshes. Then it holds
\[
\sup_{\psi_h\in U_h}\frac{\left|(\nabla \bar\varphi,\nabla S_h\psi_h)_{L^2(\Omega)}\right|}{\|\psi_h\|_{L^2(\Gamma)}}\le
{c h^s\quad \forall s\in\mathbb{R}\text{ such that }s<\lambda-\frac12\text{ and }s\leq\frac32.}
\]
\end{lemma}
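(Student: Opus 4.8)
The plan is to reduce, for $\lambda\le\frac32$, to Lemma~\ref{L4.3}: there the exponent $\lambda-\frac12$ is at most $1$, so no logarithmic factor appears and the cap $s\le\frac32$ is inactive, whence Lemma~\ref{L4.3} already gives $ch^s$ for all $s<\lambda-\frac12$. Thus I may assume $\lambda>\frac32$, so that $\Omega$ is convex with all interior angles below $\frac{2\pi}{3}$ and Lemma~\ref{C4.1r} applies; it furnishes $\bar\varphi=\bar\varphi_{\reg}+\bar\varphi_{\sing}$ with $\bar\varphi_{\reg}\in W^{3,q}(\Omega)$ for some $q>2$ and $\bar\varphi_{\sing}$ a finite sum of cut-off singular functions (plus the $\log$-terms at right angles) whose $k$-th derivatives decay like $\sum_j r_j^{\lambda_j-k}$ towards the corners; moreover $\bar\varphi\in H^s(\Omega)$ with $s>1$ (Lemma~\ref{L2.3}), so $\bar\varphi$ is continuous and vanishes on $\Gamma$, hence $I_h\bar\varphi\in Y_{0,h}$. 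As in the derivation of \eqref{eq:local3}, discrete harmonicity of $S_h\psi_h$ then gives $(\nabla\bar\varphi,\nabla S_h\psi_h)_{L^2(\Omega)}=(\nabla(\bar\varphi-I_h\bar\varphi),\nabla S_h\psi_h)_{L^2(\Omega)}$, which I split according to $\bar\varphi-I_h\bar\varphi=(\bar\varphi_{\reg}-I_h\bar\varphi_{\reg})+(\bar\varphi_{\sing}-I_h\bar\varphi_{\sing})$. Throughout I will use $\|S_h\psi_h\|_{L^2(\Gamma)}=\|\psi_h\|_{L^2(\Gamma)}$, the bound $\|\nabla S_h\psi_h\|_{L^2(\Omega)}\le\|\nabla\tilde S_h\psi_h\|_{L^2(\Omega)}\le ch^{-1/2}\|\psi_h\|_{L^2(\Gamma)}$ (energy minimality of the discrete harmonic extension and \eqref{eq:local1}), the $L^2$- and $L^\infty$-stability of $S_h$, and the inverse estimate $\|\psi_h\|_{L^\infty(\Gamma)}\le ch^{-1/2}\|\psi_h\|_{L^2(\Gamma)}$.

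For the regular part I would apply Lemma~\ref{lemma:DGH} directly with $f=\bar\varphi_{\reg}$, $\phi_h=S_h\psi_h\in Y_h$ and $\sigma=1$; inserting the bounds above gives a contribution $\le c(h^{2}h^{-1/2}+h^{3/2})\|\psi_h\|_{L^2(\Gamma)}=ch^{3/2}\|\psi_h\|_{L^2(\Gamma)}$, more than enough.

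The singular part is the core of the argument; I localise around the critical corner(s). On the innermost disc $\Omega_I=\{r\le d_I\}$, $d_I=c_Ih$, the Cauchy--Schwarz inequality together with $\|\nabla(\bar\varphi_{\sing}-I_h\bar\varphi_{\sing})\|_{L^2(\Omega_I)}\le ch^{s'}$ for every $s'<\min\{\lambda,2\}$ and $\|\nabla S_h\psi_h\|_{L^2(\Omega)}\le ch^{-1/2}\|\psi_h\|_{L^2(\Gamma)}$ already yields a contribution of the required order (the function is small near the corner, so plain Cauchy--Schwarz suffices here). On the dyadic annuli $\Omega_J$, $d_J=2^{-J}$, $1\le J\le I-1$, the function $\bar\varphi_{\sing}$ is smooth on the enlarged strip $\Omega_J'$ with $\|D^k\bar\varphi_{\sing}\|_{L^\infty(\Omega_J')}\le cd_J^{\lambda-k}$, and --- the decisive structural observation --- rescaling $\Omega_J'$ to unit size sends the $O(h^2)$-irregular triangulation of $\Omega$ to an $O(\hat h^2)$-irregular triangulation of the rescaled strip, with $\hat h=h/d_J$. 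Applying a \emph{localised} version of Lemma~\ref{lemma:DGH} on the rescaled strip to the rescaled singular function (whose $W^{3,q}$-norm is $O(1)$ uniformly in $J$), transforming back, and summing over $J$ --- using $\|S_h\psi_h\|_{L^2(\Omega_J')}\le cd_J\|\psi_h\|_{L^\infty(\Gamma)}$ for the lowest-order term, the finite overlap of the $\Omega_J'$, Cauchy--Schwarz in $J$, and the geometric sums $\sum_J d_J^{2\lambda-4}\sim h^{\min\{2\lambda-4,0\}}$ (with a $|\log h|$ in the borderline case $\lambda=2$) --- should produce a bound $ch^{\min\{\lambda-1/2,\,3/2\}}|\log h|^{\kappa}\|\psi_h\|_{L^2(\Gamma)}$ for the singular contribution. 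Collecting the two contributions and absorbing the logarithm into the strict inequality $s<\min\{\lambda-\frac12,\frac32\}$ yields the claim.

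The hard part will be the \emph{localised} superconvergence estimate on the strips $\Omega_J$: Lemma~\ref{lemma:DGH} is a global statement whose boundary term refers to the full boundary $\Gamma_h$, whereas $\partial\Omega_J'$ also contains artificial circular arcs, and one must propagate the corner weights $d_J^{\lambda-k}$ through the rescaling and the $J$-summation without losing an order of convergence. This is exactly the ``extension of the estimates of \cite{Deckelnick-Gunther-Hinze2009} to less regular functions on polygonal domains'' announced before Definition~\ref{def:superconvergence}; a convenient way to organise it is to prove once and for all an interpolated superconvergence estimate, roughly $|(\nabla(f-I_hf),\nabla\phi_h)_{L^2(\Omega)}|\le c\|f\|_{H^{1+\beta}(\Omega)}(h^{\min\{\beta,2\}}\|\phi_h\|_{H^1(\Omega)}+h^{\min\{\beta-1/2,3/2\}}\|\phi_h\|_{L^2(\Gamma)})$ for $\beta>1$, by the same splitting-plus-localisation device, and then to apply it with $f=\bar\varphi_{\sing}$, $\beta=\lambda-\varepsilon$ and $\phi_h=S_h\psi_h$.
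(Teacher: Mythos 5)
Your reduction to Lemma \ref{L4.3} for $\lambda\le\frac32$, the splitting $\bar\varphi=\bar\varphi_{\reg}+\bar\varphi_{\sing}$ via Lemma \ref{C4.1r}, the application of Lemma \ref{lemma:DGH} to the regular part, and the Cauchy--Schwarz treatment of the elements at the corner all match the paper's proof. The gap is in the remaining, decisive piece: the superconvergence estimate for $\bar\varphi_{\sing}$ away from the corner. You propose a dyadic decomposition into annuli $\Omega_J$ and a ``localised version of Lemma \ref{lemma:DGH}'' on each rescaled annulus, but that local lemma is exactly what you do not have: as you concede yourself, Lemma \ref{lemma:DGH} is a global statement whose boundary term lives on the boundary of the polygonal domain being triangulated, the artificial circular arcs bounding $\Omega_J'$ are not unions of element edges, and $S_h\psi_h$ is not small on them. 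Announcing an ``interpolated superconvergence estimate'' to be proved later ``by the same splitting-plus-localisation device'' defers the proof at precisely the point where the work lies, so the argument is not complete.

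The paper avoids the localisation problem altogether with a simpler device. It splits the neighbourhood of the critical corner into only two pieces: $\Omega_{h,1}$, the union of the elements touching the corner (handled by Cauchy--Schwarz and a weighted $H^2$-bound, exactly as in your innermost disc), and $\Omega_{h,2}=\Omega\setminus\Omega_{h,1}$. Since $\Omega_{h,2}$ is itself a polygonal domain carrying an $O(h^2)$-irregular triangulation, the \emph{global} Lemma \ref{lemma:DGH} applies to it verbatim, at the price of the blow-up $\|\bar\varphi_{\sing}\|_{W^{3,q}(\Omega_{h,2})}\le ch^{\lambda-2-\varepsilon}$ (third derivatives behave like $r_1^{\lambda-3}$ and $\min_{\Omega_{h,2}}r_1\sim h$). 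The new boundary term on the artificial interior boundary $\partial\Omega_{h,2}\cap\Omega$, which has length $\sim h$, is controlled by a discrete Sobolev inequality,
\[
\|S_h\psi_h\|_{L^2(\partial\Omega_{h,2}\cap\Omega)}\le ch^{1/2}|\log h|^{1/2}\|S_h\psi_h\|_{H^1(\Omega)}\le c|\log h|^{1/2}\|\psi_h\|_{L^2(\Gamma)},
\]
using $\|S_h\psi_h\|_{H^1(\Omega)}\le ch^{-1/2}\|\psi_h\|_{L^2(\Gamma)}$; multiplying out, $h^{3/2}\cdot h^{\lambda-2-\varepsilon}\cdot|\log h|^{1/2}\le ch^{\lambda-1/2-\varepsilon'}$ gives the claimed rate with no localised lemma needed. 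You would do well to replace your annulus argument by this two-piece decomposition.
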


\begin{proof}
First we observe that
\[
	\left|(\nabla \bar\varphi,\nabla S_h\psi_h)_{L^2(\Omega)}\right|=\left|(\nabla (\bar\varphi-I_h\bar{\varphi}),\nabla S_h\psi_h)_{L^2(\Omega)}\right|
\]
since $S_h$ represents the discrete harmonic extension operator and $\bar \varphi$ has zero boundary conditions.
If at least one interior angle $\omega_i$ is greater or equal to $2\pi/3$, we have
$\lambda\leq3/2$ and therefore $\lambda-1/2\leq 1$. Consequently, there is no
advantage in taking a superconvergence mesh and we can apply the result for
quasi-uniform meshes.
If $\omega_i<\pi/2$ for $i=1,\ldots,M$, and hence
$\lambda>2$, we can directly apply the results of Lemma \ref{lemma:DGH} since
$\bar\varphi \in W^{3,q}(\Omega)$ for some $q>2$ according to Lemma
\ref{C4.1r}.
For these reasons, we focus in the following only on the case $3/2<\lambda\leq
2$.
We are in this case if the largest interior angle, denoted by
$\omega_1$ in the following, fulfills
$\pi/2\leq \omega_1 < 2\pi/3$. For simplicity, we assume as in the proof of
Lemma \ref{L4.3} that the remaining angles are less than $\pi/2$. However, the
proof again extends to the general case in a natural way.
According to Lemma \ref{C4.1r} we have that
\[
	\bar\varphi=\bar\varphi_{\sing}+\bar{\varphi}_{\reg},
\]
where $\bar{\varphi}_{\reg}$ belongs to $W^{3,q}(\Omega)$ with some $q>2$. For
the regular part we can again employ Lemma \ref{lemma:DGH} to obtain the
order $3/2$. The singular part behaves at worst like $r_1^{\lambda}$ or
like $r_1^2|\log r_1|$, respectively, if $\lambda=2$.
As before, we would like to use Lemma \ref{lemma:DGH} to get the corresponding
estimate. For that purpose, we decompose the domain into two disjoint subsets
$\Omega_{h,1}$ and $\Omega_{h,2}$. The set $\Omega_{h,1}$ consists of the
elements of the triangulation which have contact to the corner
$x_1$, while $\Omega_{h,2}:=\Omega\backslash\Omega_{h,1}$. Since the
triangulation of $\Omega$ is $O(h^2)$ irregular, the triangulation of
$\Omega_{h,2}$ is $O(h^2)$ irregular, either. Applying Lemma \ref{lemma:DGH}
yields for any $q>2$
\begin{align}
	\left|\int_{\Omega_{h,2}} \nabla(\bar\varphi_{\sing}-I_h\bar\varphi_{\sing})\cdot\nabla S_h\psi_h\right|\leq c\|\bar\varphi_{\sing}\|_{W^{3,q}(\Omega_{h,2})}\left(h^{2}\|S_h\psi_h\|_{H^1(\Omega_{h,2})}\right.\notag\\
\quad\left.+h^{3/2}\left(\|S_h\psi_h\|_{L^2(\partial\Omega_{h,2}\cap\Omega)}+\|\psi_h\|_{L^2(\partial\Omega_{h,2}\cap\Gamma)}\right)\right).\label{eq:Omegah21}
\end{align}
Since the number of elements in $\Omega_{h,1}$ is bounded independently of $h$ and $\partial \Omega_{h,1}\cap\Omega = \partial \Omega_{h,2}\cap\Omega$, we have that $\left|\partial\Omega_{h,2}\cap\Omega\right|\sim h$.
Using this fact, the H\"older inequality,
 and a discrete Sobolev inequality, we obtain
\begin{equation}\label{eq:Omegah22}
	\|S_h\psi_h\|_{L^2(\partial\Omega_{h,2}\cap\Omega)}\leq ch^{1/2}\|S_h\psi_h\|_{L^\infty(\Omega)}\leq ch^{1/2}\left|\log h\right|^{1/2}\|S_h\psi_h\|_{H^1(\Omega)}.
\end{equation}
Define $\tilde S_h$ as the zero extension operator as in the proof of Lemma
\ref{L4.3}. Since $S_h\psi_h$ denotes the discrete harmonic extension of
$\psi_h$, we infer
\[
\|\nabla(S_h-\tilde
S_h)\psi_h\|_{L^2(\Omega)}\leq \|\nabla\tilde S_h\psi_h\|_{L^2(\Omega)}.
\]
Using this in combination with the Poincar\'{e} inequality yields
\begin{align}
	\|S_h\psi_h\|_{H^1(\Omega)}&\leq \|(S_h-\tilde
	S_h)\psi_h\|_{H^1(\Omega)}+\|\tilde
	S_h\psi_h\|_{L^2(\Omega)}+\|\nabla\tilde S_h\psi_h\|_{L^2(\Omega)}\notag\\
	&\leq c\|\nabla(S_h-\tilde
	S_h)\psi_h\|_{L^2(\Omega)}+\|\tilde
	S_h\psi_h\|_{L^2(\Omega)}+\|\nabla\tilde S_h\psi_h\|_{L^2(\Omega)}\notag\\
	&\leq\|\tilde
	S_h\psi_h\|_{L^2(\Omega)}+c\|\nabla\tilde S_h\psi_h\|_{L^2(\Omega)}\notag\\
	&\leq ch^{-1/2} \|\psi_h\|_{L^2(\Gamma)},
\end{align}
where we used \eqref{eq:local1} in the last step.

Next, we observe that the third derivatives of $\bar{\varphi}_{\sing}$ behave like $r_1^{{\lambda}-3}$ such that we can conclude for some arbitrary $\varepsilon>0$ (depending on $q$)
\begin{equation}\label{eq:Omegah23}
	\|\bar\varphi_{\sing}\|_{W^{3,q}(\Omega_{h,2})}\leq c\|r_1^{{\lambda}-3}\|_{L^q(\Omega_{h,2})}\leq c h^{\lambda-2-\varepsilon}
\end{equation}
since $\min_{x\in\Omega_{h,2}}r_1(x)\sim h$. Collecting
\eqref{eq:Omegah21}--\eqref{eq:Omegah23} yields
\[
	\left|\int_{\Omega_{h,2}} \nabla(\bar\varphi_{\sing}-I_h\bar\varphi_{\sing})\cdot\nabla S_h\psi_h\right|\leq c h^{\lambda-1/2-\epsilon}\|\psi_h\|_{L^2(\Gamma)},
\]
which represents the desired result for the subdomain $\Omega_{h,2}$. Finally, for the subdomain $\Omega_{h,1}$, we conclude by inserting a standard interpolation error estimate and the a priori estimate for the operator $S_h$ as before that
\begin{align*}
	\left|\int_{\Omega_{h,1}} \nabla(\bar\varphi_{\sing}-I_h\bar\varphi_{\sing})\cdot\nabla S_h\psi_h\right|&\leq c \|\nabla(\bar\varphi_{\sing}-I_h\bar\varphi_{\sing})\|_{L^2(\Omega_{h,1})}\|\nabla S_h\psi_h\|_{L^2(\Omega_{h,1})}\\
	&\leq c h^{1/2}|\bar{\varphi}_{\sing}|_{H^2(\Omega_{h,1})}\|\psi_h\|_{L^2(\Gamma)}.
\end{align*}
After observing that the second derivatives of $\bar{\varphi}_{\sing}$ behave like $r_1^{{\lambda}-2}$ or $\log r_1$, respectively, if ${\lambda}=2$, and that $\max_{x\in\Omega_{h,1}} r_1(x)\sim h$, we get the desired result for the subdomain $\Omega_{h,1}$.
\end{proof}

\begin{proof}[Proof of Theorem \ref{main:unc}]
The result is obtained from the general error estimate in Theorem \ref{T3.2} using the estimates in Lemmata \ref{L4.1}, \ref{L4.3} and \ref{L4.4}.
\end{proof}
\begin{remark}\label{R-unc}
  As we commented in the introduction,
  it is possible, though unlikely, that the coefficient $c_{j,1}$ of the leading singular exponent vanishes. In this case, we can replace the parameter $\lambda$ in Theorem \ref{main:unc} by $\min\{\Lambda_j\}$.
\end{remark}

\section{The control constrained case}\label{sec:5}
This section is devoted to the numerical analysis of control constrained Dirichlet control problems.
As we will see, the convergence rates in convex domains coincide with those for the unconstrained problems. More precisely, we will prove the following theorem.
\begin{theorem}\label{main:cc}Suppose that either $\lambda < 1$
	and $y_\Omega\in
{L^2(\Omega)}$, or $y_\Omega\in
	W^{1,p^*}(\Omega)$ for some
	$p^*>2$.
Moreover, assume that the optimal control has a finite number of kink points.
	Then it holds
	\begin{align*}
	\|\bar u-\bar u_h\|_{L^2(\Gamma)}+\|\bar y-\bar y_h\|_{L^2(\Omega)}& \le ch^{s}
|\log h|^r\quad \\ &\forall s\in \mathbb{R} \text{ such that }s<\lambda-\tfrac12\text{ and }s\leq 1,
	\end{align*}
where $r$ is equal to one for $\lambda-\tfrac12\in (1,\tfrac32]$ and equal to zero else.
	If, further, $\{\mathcal{T}_h\}$ is $O(h^2)$-irregular according to
		Definition \ref{def:superconvergence}, then
	\begin{align*}
	\|\bar u-\bar u_h\|_{L^2(\Gamma)}+\|\bar y-\bar y_h\|_{L^2(\Omega)} \le ch^{s}\quad \forall s<\min\{\tfrac32,\lambda-\tfrac12\}.
	\end{align*}
\end{theorem}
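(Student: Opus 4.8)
The plan is to follow the proof of Theorem~\ref{main:unc} essentially line by line, the only change being that the regularity input from Lemma~\ref{L2.3} is replaced by that of Lemma~\ref{L2.4} (with Lemma~\ref{L2.5} accounting for the fact that the control is flattened, hence smooth, near every corner with $c_{j,1}\neq0$). Concretely, I would start from the general estimate \eqref{eq:general_estimate} of Theorem~\ref{T3.2} and bound its three summands separately.

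The supremum term $\sup_{\psi_h\in U_h}|(\nabla\bar\varphi,\nabla S_h\psi_h)_{L^2(\Omega)}|/\|\psi_h\|_{L^2(\Gamma)}$ involves only the adjoint state $\bar\varphi$. Since $-\infty<a<b<\infty$, Lemmata~\ref{L3.1} and~\ref{C4.1r} provide precisely the same asymptotic decomposition of $\bar\varphi$ as in the convex / unconstrained case, with leading corner singularity $r_j^{\lambda_j}$, $\lambda_j=\pi/\omega_j$. Hence the arguments of Lemma~\ref{L4.3} and Lemma~\ref{L4.4} apply verbatim and give $c h^s|\log h|^r$ on general quasi-uniform meshes and $c h^s$ with $s<\min\{\tfrac32,\lambda-\tfrac12\}$ on $O(h^2)$-irregular meshes for this term. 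This is also the summand that dictates the final rate: on a quasi-uniform mesh the $H^1$-error of the Ritz projection of $\bar\varphi$ cannot decay faster than $h^\lambda$ because of the $r^\lambda$ singularity, so the improved regularity of $\bar u$ and $\bar y$ in the constrained case does not help, and the exponent stays $\lambda-\tfrac12$ (up to the logarithm and the superconvergence cap $\tfrac32$).

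For the remaining two terms I would prove the control-constrained counterpart of Lemma~\ref{L4.1}. By Lemma~\ref{L2.4} and the elementary fact that $\Lambda\ge\lambda$, one has $\bar u\in H^s(\Gamma)$ and $\bar y\in H^{s+1/2}(\Omega)$ for all $s<\min\{\tfrac32,\lambda-\tfrac12\}$. For the control, take $u_h^*$ to be the Casas--Raymond quasi-interpolant \eqref{eq:u*1} if $\bar u\in H^s(\Gamma)$ with $s<1$ and the modified Lagrange interpolant \eqref{eq:u*2} if $s\ge1$; here the finiteness of the set of kink points together with the H\"older continuity of $\bar u$ ensures that \eqref{eq:u*2} is unambiguous and lies in $U_{ad}^h$ once $h$ is small, and in both cases $u_h^*$ satisfies \eqref{eq:u_h} by construction, so that standard interpolation estimates give $\|\bar u-u_h^*\|_{L^2(\Gamma)}\le c h^s$. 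For the state, one has $\Lambda-\tfrac12>\tfrac12$ and $\lambda+\tfrac12>1$ in the constrained setting, hence $\bar y\in H^{1+\varepsilon}(\Omega)$ for some $\varepsilon>0$ even when $\Omega$ is non-convex; so a standard finite element estimate for the discrete harmonic extension, an Aubin--Nitsche duality argument, and the stability of $S_h$ from \cite[Lemma~2.3 and Corollary~3.3]{Apel-Nicaise-Pfefferer2014} yield $\|\bar y-S_h\bar u\|_{L^2(\Omega)}\le c h^s$ for $s<\min\{\tfrac32,\lambda-\tfrac12\}$ --- this step is actually simpler than in the unconstrained non-convex case, where $\bar y\notin H^1(\Omega)$ forces the special argument of \cite[Remark~5.4]{Apel-Nicaise-Pfefferer2014}. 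Inserting the three bounds into \eqref{eq:general_estimate} completes the proof.

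I do not anticipate a genuine obstacle: the architecture is identical to that of Theorem~\ref{main:unc} and the adjoint-state term is untouched. The only delicate points are bookkeeping ones: verifying that $u_h^*$ from \eqref{eq:u*1}/\eqref{eq:u*2} is admissible and satisfies \eqref{eq:u_h} in the presence of box constraints (this is exactly where the ``finitely many kinks'' hypothesis is used), and noting $\Lambda\ge\lambda$ so that the control and state contributions never undershoot the adjoint-state rate.
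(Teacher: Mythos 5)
Your proposal is correct and follows exactly the paper's route: the paper proves Theorem~\ref{main:cc} by combining Theorem~\ref{T3.2} with Lemmata~\ref{L4.3} and~\ref{L4.4} for the adjoint-state term (which, as you note, is unaffected by the constraints) and with Lemma~\ref{L5.1}, the constrained analogue of Lemma~\ref{L4.1}, for the control and state terms. The only place where you are slightly too quick is the phrase ``standard interpolation estimates'' for $u_h^*$ when $s\ge 1$: on the finitely many elements touching a kink the modified interpolant \eqref{eq:u*2} is \emph{not} the Lagrange interpolant, and the paper bounds the error there directly via the H\"older continuity $\bar u\in C^{0,s-1/2}(\Gamma)$ --- this, rather than well-definedness, is where the finitely-many-kinks hypothesis does its real work.
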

The proof of this theorem is postponed to Section \ref{sec:proof1}. As already observed, this is exactly the result which we have proven in the unconstrained case. However, if the underlying domain is non-convex, the approximation rates in the control constrained case can be improved. In this regard, one of our results relies on a structural assumption on the discrete optimal control which we formulate next. Through this section we will shortly write
\[\mathbb{H} = \{j: \lambda_j<1\mbox{ and }c_{j,1}\neq 0\}.\]
\begin{assumption}\label{assump1}
	There exists some $h_0>0$ such that for every $j\in\mathbb{H}$,
	there exists $\tilde \rho_j>0$ independent of $h$ such that $\bar u_h(x)=\bar u(x)$ for all $h<h_0$ if $|x-x_j|<\tilde \rho_j$.
\end{assumption}
Let us comment on Assumption \ref{assump1}. In Lemma \ref{L2.5} it was established  that in the neighbourhood of a non-convex corner, the optimal control will normally be constant and either equal to the lower or the upper bound. Assumption \ref{assump1} says that this property is inherited by the discrete optimal control.
	
One of our main results in the constrained case is now given as follows.
\begin{theorem}\label{T5main} Suppose $y_\Omega\in W^{1,p^*}(\Omega)$ for some $p^*>2$. Moreover, let either $\lambda >1$ or Assumption \ref{assump1} be satisfied,
and assume that the optimal control has a finite number of kink points.
Then there is the estimate
\begin{align}\label{T51q}
\|\bar u-\bar u_h\|_{L^2(\Gamma)}+\|\bar y-\bar y_h\|_{L^2(\Omega)}&\le ch^{s}
|\log h|^r\notag \\&\forall s\in\mathbb{R}\mbox{ such that }s<\Lambda-\tfrac12\mbox{ and } s\leq 1,
\end{align}
where $r$ is equal to one for $\Lambda-\tfrac12\in (1,\tfrac32]$ and equal to zero else.
If further $\{\mathcal{T}_h\}$ is $O(h^2)$ irregular, then
\begin{equation}\label{T51s}
\|\bar u-\bar u_h\|_{L^2(\Gamma)}+\|\bar y-\bar y_h\|_{L^2(\Omega)}\le ch^{s}\
\forall s\in\mathbb{R}\mbox{ such that
}s<\min\{\tfrac32,\Lambda-\tfrac12,2
{\lambda}\}.
\end{equation}
\end{theorem}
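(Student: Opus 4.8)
If $\lambda>1$ then $\Omega$ is convex, $\mathbb{H}=\emptyset$, $\Lambda=\lambda$, and the assertion coincides with Theorem~\ref{main:cc}; so assume from now on that $\mathbb{H}\neq\emptyset$ and that Assumption~\ref{assump1} holds. The plan is to re-run the proof of Theorem~\ref{T3.2}, where one has to control the three quantities $\|\bar u-u_h^*\|_{L^2(\Gamma)}$, $\|\bar y-S_h\bar u\|_{L^2(\Omega)}$ and $(\nabla\bar\varphi,\nabla S_he_h)_{L^2(\Omega)}$ with $e_h=u_h^*-\bar u_h$, but now with $\lambda$ upgraded to $\Lambda$ wherever possible. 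The decisive structural observation is that, by Lemma~\ref{L2.5}, $\bar u$ is constant on a fixed neighbourhood of every corner $x_j$ with $j\in\mathbb{H}$; hence so is $u_h^*$ by its construction (both \eqref{eq:u*1} and \eqref{eq:u*2} reproduce $\bar u$ on the active set), and by Assumption~\ref{assump1} so is $\bar u_h$; therefore $e_h\equiv0$ on a fixed neighbourhood of each such corner. This is exactly what allows us to avoid the loss produced by the low-regularity corner singularities of $\bar\varphi$.

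For the first quantity I would establish a control-constrained counterpart of Lemma~\ref{L4.1}, namely Lemma~\ref{L5.1}: by Lemma~\ref{L2.4}, $\bar u\in H^s(\Gamma)$ for all $s<\min\{\tfrac32,\Lambda-\tfrac12\}$, so choosing $u_h^*$ as the Casas--Raymond quasi-interpolant \eqref{eq:u*1} when $s<1$ and as the modified Lagrange interpolant \eqref{eq:u*2} when $s\ge1$ (well defined for $h$ small since $\bar u$ is then H\"older continuous with finitely many kinks), standard interpolation estimates give $\|\bar u-u_h^*\|_{L^2(\Gamma)}\le ch^s$ for all such $s$. For the state, Lemma~\ref{L2.4} yields $\bar y\in H^{s+1/2}(\Omega)$ for all $s<\min\{\tfrac32,\Lambda-\tfrac12,\lambda+\tfrac12\}$; a finite element energy estimate combined with the Aubin--Nitsche trick --- whose gain is limited by the $H^{1+\min\{1,\lambda\}-\varepsilon}(\Omega)$-regularity of the dual problem in a non-convex domain --- gives $\|\bar y-S_h\bar u\|_{L^2(\Omega)}\le ch^s$ for all $s<\min\{\tfrac32,\Lambda-\tfrac12,2\lambda\}$, which already explains the factor $2\lambda$ in \eqref{T51s}.

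It remains to estimate $(\nabla\bar\varphi,\nabla S_he_h)_{L^2(\Omega)}$, which we bound directly in \eqref{eq:generr3}--\eqref{eq:generr5} instead of through the uniform supremum of \eqref{eq:general_estimate}. By Lemma~\ref{C4.1r} (applicable because the problem is control constrained and $y_\Omega\in W^{1,p^*}(\Omega)$) we write $\bar\varphi=\bar\varphi_{\mathbb{H}}+(\bar\varphi-\bar\varphi_{\mathbb{H}})$, where $\bar\varphi_{\mathbb{H}}$ collects the singular terms of the expansion attached to the corners $x_j$, $j\in\mathbb{H}$, but with cut-off functions chosen to be supported strictly inside the neighbourhood of $x_j$ on which $e_h$ vanishes. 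By the definition \eqref{MME7}--\eqref{eq:Lambda} of $\Lambda$, the remainder $\bar\varphi-\bar\varphi_{\mathbb{H}}$ is of the form ``$W^{3,q}$-regular part plus singular terms with exponents at least $\Lambda>1$''; repeating the proofs of Lemmata~\ref{L4.3} and~\ref{L4.4} verbatim with $\lambda$ replaced by $\Lambda$ then yields
\[
|(\nabla(\bar\varphi-\bar\varphi_{\mathbb{H}}),\nabla S_he_h)_{L^2(\Omega)}|\le c\,h^{s}|\log h|^{r}\|e_h\|_{L^2(\Gamma)}
\]
for all $s<\Lambda-\tfrac12$ with $s\le1$ on quasi-uniform meshes, and for all $s<\min\{\tfrac32,\Lambda-\tfrac12\}$ on $O(h^2)$-irregular meshes.

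For the term $(\nabla\bar\varphi_{\mathbb{H}},\nabla S_he_h)_{L^2(\Omega)}$ we use that $S_he_h$ is discrete harmonic and $R_h\bar\varphi_{\mathbb{H}}\in Y_{0,h}$, so it equals $(\nabla(\bar\varphi_{\mathbb{H}}-R_h\bar\varphi_{\mathbb{H}}),\nabla S_he_h)_{L^2(\Omega)}$, and by the orthogonality of the Ritz projection this in turn equals $(\nabla(\bar\varphi_{\mathbb{H}}-R_h\bar\varphi_{\mathbb{H}}),\nabla\tilde S_he_h)_{L^2(\Omega)}$, with $\tilde S_he_h$ the zero extension of $e_h$, supported on boundary elements away from the corners in $\mathbb{H}$. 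On that support $\bar\varphi_{\mathbb{H}}$ vanishes by the choice of its cut-offs, so the integral reduces to $-(\nabla R_h\bar\varphi_{\mathbb{H}},\nabla\tilde S_he_h)_{L^2(\Omega)}$, i.e.\ to the far-field pollution of the Ritz projection of a function carrying $r_j^{\lambda_j}$-type corner singularities, which is of order $h^{2\lambda_j}$ up to a logarithm; together with $\|\nabla\tilde S_he_h\|_{L^1(\Omega)}\le c\|e_h\|_{L^1(\Gamma)}\le c\|e_h\|_{L^2(\Gamma)}$ and summation over $j\in\mathbb{H}$ this gives $|(\nabla\bar\varphi_{\mathbb{H}},\nabla S_he_h)_{L^2(\Omega)}|\le c\,h^{2\lambda}|\log h|\,\|e_h\|_{L^2(\Gamma)}$; since $2\lambda>1$ this is harmless for \eqref{T51q} and contributes the cap $s<2\lambda$ in \eqref{T51s}. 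Inserting these bounds for $(\nabla\bar\varphi,\nabla S_he_h)_{L^2(\Omega)}$ into the chain \eqref{eq:generr2}--\eqref{eq:generr5} and applying Young's inequality exactly as in the proof of Theorem~\ref{T3.2} completes the argument. I expect the main obstacle to be in this last paragraph: making the far-field Ritz-projection pollution estimate for $\bar\varphi_{\mathbb{H}}$ rigorous at the discrete level with the sharp exponent $2\lambda_j$, and keeping track of all the caps consistently; everything else is either standard or a transcription of the unconstrained analysis with $\lambda$ replaced by $\Lambda$.
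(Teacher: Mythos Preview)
Your proposal is correct and follows essentially the same route as the paper. The paper packages the observation that $e_h=u_h^*-\bar u_h$ vanishes on $\tilde\Gamma$ into a dedicated space $V_h=\{u_h\in U_h:u_h\equiv0\text{ on }\tilde\Gamma\}$ and states a modified general estimate (Theorem~\ref{T5.2}) where the supremum runs over $V_h$ instead of $U_h$; it then proves the adjoint bound as separate Lemmata~\ref{L5.3} and~\ref{L5.7}, decomposing $\bar\varphi$ via smooth spatial cut-offs $\eta_0,\eta_1$ rather than via the singular expansion as you do. Your far-field pollution step for $\bar\varphi_{\mathbb{H}}$ is exactly what the paper carries out in \eqref{eq:5.6.1}--\eqref{eq:5.6.3}: the local $W^{1,\infty}$ estimate of Demlow reduces $\|\nabla(\eta_1\bar\varphi-R_h(\eta_1\bar\varphi))\|_{L^\infty(\Omega\setminus\tilde\Omega)}$ to a local interpolation term (which vanishes because $\eta_1\bar\varphi$ is zero there) plus the global $L^2$ Ritz error, and the latter is $O(h^s)$ for $s<2\lambda$ by the standard energy estimate combined with an Aubin--Nitsche gain limited to $\lambda$; so your anticipated ``main obstacle'' is handled precisely this way, without any logarithm.
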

\begin{remark}\label{sec:rem_ext}
	We only consider the case $a<0<b$. This is because it is known that for those corners such that $\Lambda_j>1$ we have that $\partial_n\bar\varphi(x_j)=0$. In the case $a<0<b$, the projection formula \eqref{eq:proj} implies that in a neighbourhood of $x_j$, the optimal control will satisfy $\bar u(x) = -\partial_n\bar\varphi(x)$, and hence its regularity will be determined by that of the adjoint state. If $0\not\in[a,b]$, then the same projection formula implies that in a neighborhood of $x_j$, $\bar u(x)$ will be equal to some of the control bounds. If we suppose, as in Assumption \ref{assump1} that this property is inherited by the solutions of the discrete approximations, we have that the
	conclusions of Theorem \ref{T5main} remain valid.
\end{remark}
The proof of Theorem \ref{T5main} is postponed to Section \ref{sec:proof1}. Since $\Lambda>1$ and $\lambda>1/2$, we always have a convergence rate greater than $1/2$. This is a real improvement compared the unconstrained case since in the latter it may happen that the convergence rates tend to zero as the largest interior angle tend to $2\pi$. However, one may ask for a justification of Assumption \ref{assump1}. In Lemma \ref{L5.9} we will see that there exist constants $\tilde\rho_{1,j}$ and $\tilde\rho_{1,2}$ greater than zero for all $j\in\mathbb{H}$, and a constant $h_0>0$ such that
\begin{equation}\label{eq:nei}
	\bar u_h(x_{h,i})=\bar u(x_{h,i})\quad \text{for all nodes }x_{h,i} \text{ with }|x_{h,i}-x_j|\in [\tilde{\rho}_{1,j} h |\log h|^{1/2} ,\tilde{\rho}_{2,j}].
\end{equation}
Thus, we could relax Assumption \ref{assump1} to an $h$-dependent neighborhood of those corners $x_j$ with $j\in\mathbb{H}$. Moreover, due to \eqref{eq:nei}, it is even possible to show the following improved result in non-convex domains without any structural assumption on the discrete optimal control, i.e., we can always expect a convergence rate close to $1/2$ in non-convex domains.
\begin{theorem}\label{T5.1}
	Suppose $y_\Omega\in W^{1,p^*}(\Omega)$ for some $p^*>2$,
and assume that the optimal control has a finite number of kink points.
Then it holds	\begin{equation}
	\|\bar u-\bar u_h\|_{L^2(\Gamma)}+\|\bar y-\bar y_h\|_{L^2(\Omega)}\le ch^{1/2}|\log h|^{1/4}.
	\end{equation}
\end{theorem}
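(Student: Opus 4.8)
The plan is to repeat the proof of Theorem~\ref{T5main} almost verbatim, the only modification concerning the corners $x_j$ with $j\in\mathbb{H}$. There, in place of Assumption~\ref{assump1} (which guaranteed $\bar u_h\equiv\bar u$ in a fixed neighbourhood of each such corner), I would invoke the unconditional statement~\eqref{eq:nei} of Lemma~\ref{L5.9}. Since by Lemma~\ref{L2.5} the optimal control equals a fixed control bound on $\Gamma\cap\{|x-x_j|<\rho_j\}$, the quasi-interpolant $u_h^*$ equals $\bar u$ there as well; combining this with~\eqref{eq:nei} and the piecewise linearity of $\bar u_h$ shows that $\bar u=\bar u_h=u_h^*$ on the annulus $A_{j,h}:=\{x\in\Gamma:\tilde\rho_{1,j}h|\log h|^{1/2}\le|x-x_j|\le\tilde\rho_{2,j}\}$ for $h<h_0$ (shrinking $\tilde\rho_{2,j}$ if needed). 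Consequently the interpolation defect $e_h:=u_h^*-\bar u_h$ vanishes on $\{|x-x_j|<\tilde\rho_{2,j}\}$ except on the shrinking ball $B_{j,h}:=\{x\in\Gamma:|x-x_j|<\tilde\rho_{1,j}h|\log h|^{1/2}\}$, whose one-dimensional measure is $O(h|\log h|^{1/2})$.

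Next I would collect the ``easy'' contributions. On the balls $B_{j,h}$ both controls lie in $[a,b]$, hence $\|\bar u-\bar u_h\|_{L^2(B_{j,h})}+\|e_h\|_{L^2(B_{j,h})}\le(b-a)\,|B_{j,h}|^{1/2}\le c\,h^{1/2}|\log h|^{1/4}$; this crude estimate is the sole origin of the logarithmic factor in the assertion. On the remainder of $\Gamma$ (resp.\ $\Omega$), i.e.\ outside the \emph{fixed} balls $\{|x-x_j|<\tilde\rho_{2,j}\}$, $j\in\mathbb{H}$, the adjoint state exhibits only corner singularities with exponent $\Lambda_k>1$ (those of the convex corners and the $2\lambda_k$ of the degenerate non-convex ones). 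Thus all three terms of the general estimate~\eqref{eq:general_estimate}, restricted to that region, are bounded exactly as in the proof of Theorem~\ref{T5main}: by Lemma~\ref{L2.4} and Lemma~\ref{L5.1} the interpolation errors $\|\bar u-u_h^*\|_{L^2(\Gamma)}$ and $\|\bar y-S_h\bar u\|_{L^2(\Omega)}$ are $O(h^{1/2})$ (using $\Lambda-\tfrac12>\tfrac12$ and $\lambda+\tfrac12>1$), and the consistency term tested against functions vanishing in fixed neighbourhoods of all $x_j$, $j\in\mathbb{H}$, is $O(h^s|\log h|^r)$ with $s$ up to $\min\{1,\Lambda-\tfrac12\}>\tfrac12$, hence $O(h^{1/2})$.

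The only genuinely new estimate, and the step I expect to be the main obstacle, is the contribution of the balls $B_{j,h}$ to the consistency term $(\nabla\bar\varphi,\nabla S_he_h)$ that appears in the proof of Theorem~\ref{T3.2} (cf.~\eqref{eq:generr1}--\eqref{eq:generr4}). Writing it as $(\nabla(\bar\varphi-R_h\bar\varphi),\nabla\tilde S_he_h)$ and splitting $\tilde S_he_h$ into the part living on the $O(|\log h|)$ elements around the $\mathbb{H}$-corners and a remainder, the remainder is handled as above; for the local part I would combine the inverse inequality~\eqref{eq:local1}, the a priori bound $\|e_h\|_{L^2(B_{j,h})}\le c\,h^{1/2}|\log h|^{1/4}$, the $L^\infty$-bound on the controls, and the (weighted) $H^1$-estimates for the Ritz projection near the corner singularity $r^{\lambda_j}$ with $\lambda_j>\tfrac12$. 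The delicate point is that $B_{j,h}$ cannot be eliminated, only shrunk to radius $\sim h|\log h|^{1/2}$, so one must ensure that this $h$-dependent ``defect region'', where $\bar u_h$ is only known to be bounded, feeds into the estimate for $e_h$ \emph{directly} and not as an isolated stand-alone term under a square root, which would halve the exponent. Once this contribution is shown to be $O(h^{1/2}|\log h|^{1/4})$, inserting all bounds into~\eqref{eq:generr1}--\eqref{eq:generr4}, applying Young's inequality, and using $\|\bar y-\bar y_h\|_{L^2(\Omega)}\le\|\bar y-S_h\bar u\|_{L^2(\Omega)}+c\|\bar u-\bar u_h\|_{L^2(\Gamma)}$ for the state error yields $\|\bar u-\bar u_h\|_{L^2(\Gamma)}+\|\bar y-\bar y_h\|_{L^2(\Omega)}\le c\,h^{1/2}|\log h|^{1/4}$.
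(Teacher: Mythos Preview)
Your overall strategy—invoking Lemma~\ref{L5.9} in place of Assumption~\ref{assump1} and decomposing the intermediate error $e_h=u_h^*-\bar u_h$ into a piece supported on the shrinking balls $B_{j,h}$ and a piece in $V_h$—is exactly the right idea, and you correctly isolate the crucial difficulty: the local contribution $(\nabla\bar\varphi,\nabla S_he_h^{(1)})$ must not enter the quadratic inequality as a stand-alone term. However, you do not actually resolve this difficulty, and the tools you list do not suffice. If you bound this term via Cauchy--Schwarz and \eqref{eq:local1}, you obtain at best
\[
(\nabla(\bar\varphi-R_h\bar\varphi),\nabla\tilde S_he_h^{(1)})
\le ch^{t}\cdot h^{-1/2}\|e_h^{(1)}\|_{L^2(B_{j,h})}
\le ch^{t}|\log h|^{1/4},\qquad t<\lambda_j<1,
\]
which, once inserted into $\nu\|e_h\|^2\le\dots$, yields only $\|e_h\|\le ch^{t/2}$ with $t/2<1/2$. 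Bounding $\|e_h^{(1)}\|\le\|e_h\|$ instead recovers merely the rate of Theorem~\ref{main:cc}. Weighted Ritz estimates do not rescue this, because $e_h^{(1)}$ lives precisely where the weight is smallest.

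The paper avoids this obstacle altogether by changing the test function rather than estimating the bad term. In the discrete variational inequality it tests with $\tilde u_h$ defined as $\bar u_h$ on the \emph{fixed} neighbourhood $\tilde\Gamma=\{|x-x_j|<\tilde\rho_{2,j}\}$ and as $u_h^*$ on $\Gamma^c=\Gamma\setminus\tilde\Gamma$. The resulting intermediate error $e_h:=\tilde u_h-\bar u_h$ then lies in $V_h$ by construction, so the consistency term is handled entirely by Lemma~\ref{L5.3}, giving a rate strictly larger than $1/2$. The price is that $\|\bar u-\bar u_h\|_{L^2(\tilde\Gamma)}$ now appears \emph{linearly} on the right-hand side of the modified general estimate (Theorem~\ref{T5.11}); but this term is exactly your crude bound $(b-a)\,|B_{j,h}|^{1/2}\le ch^{1/2}|\log h|^{1/4}$, and since it is not squared the exponent is not halved. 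This redefinition of the test function is the missing ingredient in your argument.
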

The proof of Theorem \ref{T5.1} is given in Section \ref{sec:proof2}.
\subsection{Proof of Theorems \ref{main:cc} and \ref{T5main}}\label{sec:proof1}
The results of Theorem \ref{main:cc}, and Theorem \ref{T5main} for $\lambda>1$ directly follow from the general error estimate given in Theorem \ref{T3.2}, the estimates for the adjoint state provided in Section \ref{sec:4} in Lemmata \ref{L4.3} and \ref{L4.4} and the error estimates for the control and the state established below in Lemma \ref{L5.1}.

\begin{lemma}\label{L5.1}Suppose $y_\Omega\in H^{t}(\Omega)
{\cap L^2(\Omega)}$ for all $t<\min\{1,\lambda-1\}$
and assume that the optimal control has a finite number of kink points.
Then
	\begin{alignat*}{2}  \| \bar u-u_h^*\|_{L^2(\Gamma)}  \leq c h^s&\quad&& \forall s<\min\{\tfrac32,\Lambda-\tfrac12\}, \\
	\|\bar y -S_h\bar u\|_{L^2(\Omega)}\leq c h^s &\quad&& \forall s<
	\min\{\tfrac32,\Lambda-\tfrac12,2
{\lambda}\}.
	\end{alignat*}
\end{lemma}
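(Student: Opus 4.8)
The plan is to estimate the two quantities separately, exploiting the regularity provided by Lemma \ref{L2.4} together with the structural information on the optimal control near the corners furnished by Lemma \ref{L2.5}. The key point is that, unlike in the unconstrained case treated in Lemma \ref{L4.1}, the exponent $\Lambda$ (rather than $\lambda$) governs the regularity of $\bar u$: at a nonconvex corner $x_j$ with $\lambda_j<1$ and $c_{j,1}\neq 0$ the control is locally constant (equal to $a$ or $b$) by Lemma \ref{L2.5}, so the leading $r_j^{\lambda_j-1}$-type behaviour of $\partial_n\bar\varphi$ is flattened by the projection, and $\bar u$ is as smooth as $H^s(\Gamma)$ for $s<\min\{3/2,\Lambda-1/2\}$. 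First I would treat the control error $\|\bar u-u_h^*\|_{L^2(\Gamma)}$. We split into the case $\min\{3/2,\Lambda-1/2\}\le 1$ — where we use the quasi-interpolant $u_h^*$ from \eqref{eq:u*1} and invoke \cite[Eq.~(7.10)]{Casas-Raymond2006} with $s=1-1/p$, noting that this interpolant reproduces $\bar u$ on the active set (in particular near the flattened nonconvex corners) — and the case $1\le s<\min\{3/2,\Lambda-1/2\}$, where $\bar u\in H^s(\Gamma)\hookrightarrow C^{0,s-1/2}(\Gamma)$, the modified Lagrange interpolant \eqref{eq:u*2} is well defined for $h$ small enough (using the finite number of kink points to guarantee uniqueness on each element), and a standard interpolation estimate on $\mathcal{E}_h$ gives the rate $h^s$.

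For the state error $\|\bar y - S_h\bar u\|_{L^2(\Omega)}$, the obstacle is that $\bar y$ has two independent sources of limited regularity: the corner singularities of the harmonic extension of the (now smoother) boundary datum $\bar u$, which contribute exponents tied to $\lambda$, and the global Sobolev regularity $\bar y\in H^{s+1/2}(\Omega)$ for $s<\min\{3/2,\Lambda-1/2,\lambda+1/2\}$ coming from Lemma \ref{L2.4}. When $\lambda\ge 1$ we have $\bar y\in H^1(\Omega)$, so $\|\bar y-S_h\bar u\|_{H^1(\Omega)}\le c h^{s-1/2}$ for $s<\min\{3/2,\Lambda-1/2,\lambda+1/2\}$, and the Aubin--Nitsche duality argument (as in \eqref{fem:unc}, using the $H^{1/2}$-pickup allowed by convexity or the corner regularity of the dual problem) upgrades this to $\|\bar y-S_h\bar u\|_{L^2(\Omega)}\le c h^{s-1/2+\min\{1,s+1/2\}}\le c h^{\min\{s,\,s+ \text{something}\ge s\}}$; since $s+1/2>1/2$ the $L^2$ exponent is at least $s$, giving $h^s$ for all $s<\min\{3/2,\Lambda-1/2,2\lambda\}$ — here the bound $\lambda+1/2$ from the $H^1$ side combines with the extra $\min\{1,s+1/2\}$ from duality to produce the $2\lambda$ cap. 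When $\lambda<1$ we no longer have $\bar y\in H^1(\Omega)$, and I would instead appeal directly to the weighted/very-weak finite element analysis of \cite[Remark 5.4]{Apel-Nicaise-Pfefferer2014}, exactly as in Lemma \ref{L4.1}, which yields the rate $h^{2\lambda^-}$ and hence matches $\min\{3/2,\Lambda-1/2,2\lambda\}$ in that regime as well.

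The main obstacle I anticipate is bookkeeping the interaction of the three competing exponents in the state estimate and verifying that the duality step genuinely produces the $2\lambda$ term rather than something worse: one must check that the adjoint problem in the Aubin--Nitsche argument has $L^2$-right-hand side and that its solution lies in $H^{1+\min\{1,\lambda\}-\varepsilon}$, so the gain in the $L^2$-norm is exactly $\min\{1,\lambda+1/2-\varepsilon\}$ over the $H^1$-rate, and that $\bar y_{\sing}$ (the corner part of $\bar y$, whose presence is noted in Lemma \ref{L2.4} as independent of the adjoint state) does not spoil this — it behaves like $r^{\lambda}$, contributing exactly the $\lambda+1/2$ limitation already accounted for. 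The control estimate, by contrast, is essentially routine once Lemma \ref{L2.5} is invoked to justify that $u_h^*$ reproduces $\bar u$ on the locally-constant pieces near nonconvex corners, so that no $r^{\lambda-1}$ singularity of $\partial_n\bar\varphi$ enters the interpolation error.
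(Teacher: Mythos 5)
Your treatment of the control error follows the paper's route: quasi-interpolant plus \cite[Eq.~(7.10)]{Casas-Raymond2006} when $s<1$, modified Lagrange interpolant plus H\"older continuity of $\bar u$ when $s\in[1,\tfrac32)$. One caution: on the two elements adjacent to a kink the function $u_h^*$ from \eqref{eq:u*2} is \emph{not} the Lagrange interpolant (a nodal value is replaced by $a$ or $b$), so ``a standard interpolation estimate'' does not apply there; the paper closes this by an explicit element-by-element computation, bounding $\|\bar u-\bar u(x_\Gamma^{j\pm1})\|_{L^2(E)}$ via the $C^{0,s-1/2}$ modulus of continuity and summing over the finitely many kinks. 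That is the missing (but recoverable) detail in your first half.

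The state estimate contains a genuine gap in the nonconvex case. You assert that for $\lambda<1$ one no longer has $\bar y\in H^1(\Omega)$ and propose to fall back on \cite[Remark~5.4]{Apel-Nicaise-Pfefferer2014}, claiming it yields the rate $h^{2\lambda-\varepsilon}$. Both points are wrong here. In the constrained setting $\Lambda>1$ by \eqref{eq:Lambda}, so Lemma \ref{L2.4} gives $\bar u\in H^{r}(\Gamma)$ for some $r>\tfrac12$ and hence $\bar y$ \emph{is} a weak solution with $\bar y\in H^{1+t}(\Omega)$, $t<\min\{1,\Lambda-1,\lambda\}$, even when $\lambda<1$ --- this is precisely the structural gain over the unconstrained Lemma \ref{L4.1} and the reason the paper can run a single, uniform argument. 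The very-weak-formulation analysis of \cite[Remark~5.4]{Apel-Nicaise-Pfefferer2014} is designed for $L^2$ boundary data and would only deliver a rate tied to $\lambda-\tfrac12<\tfrac12$, far short of the claimed $2\lambda>1$. The correct derivation (the paper's) is one line: the duality argument gives $\|\bar y-S_h\bar u\|_{L^2(\Omega)}\le ch^{t+\min\{1,\,r+1/2,\,\lambda\}}$, and for $\lambda<1$ the exponent is $t+\lambda<2\lambda$, which is where the $2\lambda$ cap actually comes from; note also that the duality gain must be capped by $\lambda$ (the regularity of the dual problem), not by $\min\{1,s+\tfrac12\}$ as in your formula, since in the constrained case $r+\tfrac12$ can exceed $\lambda$. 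Your $\lambda\ge1$ branch is fine, but as written your argument does not establish the lemma for nonconvex domains.
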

\begin{proof}
The proof starts exactly following the lines of the proof of Lemma \ref{L4.1}, using the regularity stated in Lemma \ref{L2.4}. In this way, if $s<1$ we again obtain the desired estimate for $u_h^*$, as defined in \eqref{eq:u*1}, from \cite[Eq. (7.10)]{Casas-Raymond2006}. If $s\in[1,\frac32)$, $u_h^*$ is given by \eqref{eq:u*2}. Since control constraints are now present, we have to derive error estimates for the modified Lagrange interpolant. To this end, let us consider two adjacent boundary elements $E_{j-1}$ and $E_j$ belonging to $\mathcal{E}_h$ which are determined by the line segments $(x_\Gamma^{j-1}, x_\Gamma^{j})$ and $(x_\Gamma^{j}, x_\Gamma^{j+1})$, respectively. Since we assume a finite number of kink points of $\bar u$ due to the projection formula \eqref{eq:proj}, we have to deal with the following situations (at least for $h$ small enough): First, no kink is contained in $E_{j-1}\cup E_j$, second, there is exactly one kink of $\bar u$ in $E_{j-1}\cup E_j$ due to the projection formula. In the first case, we have that $u_{h}^*$ coincides with the Lagrange interpolant on $E_{j-1}\cup E_j$ such that the desired estimate on these elements is obtained by standard discretization error estimates for the Lagrange interpolant employing the regularity results from Lemma \ref{L2.4}, i.e.,
\begin{equation}\label{eq:int1}
\| \bar u-u_h^*\|_{L^2(E_{j-1}\cup E_j)}  \leq c h^s|\bar u|_{H^s(E_{j-1}\cup E_j)}
\end{equation}
with $s<\min\{3/2,\Lambda-1/2\}$.
In the second case, we can assume without loss of generality that $u_{h,j-1}^*=b=\bar u(x_\Gamma^{j-1})$, $u_{h,j}^*=b\neq\bar u(x_\Gamma^{j})$ and $u_{h,j+1}^*=\bar u(x_\Gamma^{j+1})\in(a,b)$. Thus, $u_h^*$ is equal to $b=\bar u(x_\Gamma^{j-1})$ on $E_{j-1}$. Using the regularity of the optimal control $\bar u\in H^s(\Gamma)\hookrightarrow C^{0,s-1/2}(\Gamma)$ with $s<\min\{3/2,\Lambda-1/2\}$ from Lemma \ref{L2.4}, we now estimate the interpolation error on each of the elements $E_{j-1}$ and $E_j$. For the error on $E_{j-1}$ we obtain by means of the H\"older continuity of $\bar u$
\begin{equation}
\| \bar u-u_h^*\|_{L^2(E_{j-1})}=\| \bar u-\bar u(x_\Gamma^{j-1})\|_{L^2(E_{j-1})}\leq c|x_\Gamma^{j-1}-x_\Gamma^{j}|^{s-1/2}|E_{j-1}|^{1/2}\sim c h^s.
\end{equation}
Next, recall that the nodal basis function associated with $x_\Gamma^j$ is denoted by $e_j$. Then we deduce for the error on $E_{j}$
\begin{align}
\| \bar u-u_h^*\|_{L^2(E_{j})}&=\| (e_j+e_{j+1})\bar u-\bar u(x_\Gamma^{j-1})e_j-\bar u(x_\Gamma^{j+1})e_{j+1}\|_{L^2(E_{j})}\notag\\
&=\| (\bar u-\bar u(x_\Gamma^{j-1}))e_j+(\bar u- \bar u(x_\Gamma^{j+1}))e_{j+1}\|_{L^2(E_{j})}\notag\\
&\leq\|\bar u-\bar u(x_\Gamma^{j-1})\|_{L^2(E_{j})}+\|\bar u- \bar u(x_\Gamma^{j+1})\|_{L^2(E_{j})}\notag\\
&\leq c\left(|x_\Gamma^{j-1}-x_\Gamma^{j+1}|^{s-1/2}|E_{j}|^{1/2}+|x_\Gamma^{j}-x_\Gamma^{j+1}|^{s-1/2}|E_{j}|^{1/2}\right)\notag\\
&\sim c h^s,\label{eq:int2}
\end{align}
where we again used the H\"older continuity of $\bar u$. Since we assume a finite number of kink points, the desired interpolation error estimate for $u_h^*$ on $\Gamma$ in case that $s\in[1,\frac32)$ is just a combination of \eqref{eq:int1}--\eqref{eq:int2}.

Since the optimal control $\bar u$ belongs at least to $H^{1/2}(\Gamma)$, the optimal state $\bar y$ is a weak solution such that we can rely on standard techniques for the derivation of the second estimate of the assertion. More precisely, by employing the regularity of $\bar y\in H^{t+1}(\Omega)$ with $t<\min\{1,\Lambda-1,\lambda\}$ and $\bar u\in H^r(\Gamma)$ with $r<\min\{\frac32,\Lambda-\frac12\}$ from Lemma \ref{L2.4}, an application of a duality argument, cf. for instance \cite{Bartels2004}, yields
	\[
	\|\bar y -S_h\bar u\|_{L^2(\Omega)}\le ch^{t+\min\{1,r+\tfrac12,\lambda\}}\le ch^{t+\min\{1,\lambda\}}\le c h^s,
	\]
	where $s<\min\{\frac32,\Lambda-\frac12,2\lambda\}$. For the last two steps notice that $\Lambda>1$ and $\lambda>1/2$.
\end{proof}

Since $\Lambda\geq\lambda$, a straightforward application of Theorem \ref{T3.2}, and Lemmata \ref{L5.1}, \ref{L4.3}
and \ref{L4.4} leads to an order of convergence identical to the one we have for unconstrained problems.
Notice that Lemmata \ref{L4.3} and \ref{L4.4} can be used since bounds on the control do not play any role there.
Thus, Theorem \ref{main:cc} and Theorem \ref{T5main} for $\lambda>1$ are proved.

For the results of Theorem \ref{T5main}, in case that $\lambda<1$ and Assumption \ref{assump1} is valid, we use the above error estimates for the control and the state, and we show in Lemmata \ref{L5.3} and \ref{L5.7} below how to improve the result for the adjoint state. Then an adaptation of the general error estimate, see Theorem \ref{T5.2}, which we are going to prove next, can finally be used to combine these results.
Let us define
\[
\tilde \Gamma:=\{x\in \Gamma:\ |x-x_j|<\tilde \rho_j \mbox{ if }j\in\mathbb{H}\}.
\]
Moreover, let
\[
V_h := \{u_h\in U_h:\ u_h\equiv 0\mbox{ on }\tilde\Gamma\}.
\]
Under the structural Assumption \ref{assump1} it is clear that $e_h=u_h^*-\bar u_h\in V_h$, so we have the following modification of the general error estimate \eqref{eq:general_estimate}.
\begin{theorem}\label{T5.2}Suppose Assumption \ref{assump1} holds. Then
	\begin{align*}
	\|\bar u-\bar u_h&\|_{L^2(\Gamma)}+\|\bar y-\bar y_h\|_{L^2(\Omega)}\\
	&\le c\left(\|\bar u- u_h^*\|_{L^2(\Gamma)}+\|\bar y-S_h\bar u\|_{L^2(\Omega)}+\sup_{\psi_h\in V_h}\frac{\left|(\nabla \bar\varphi,\nabla S_h\psi_h)_{L^2(\Omega)}\right|}{\|\psi_h\|_{L^2(\Gamma)}}\right).
	\end{align*}
\end{theorem}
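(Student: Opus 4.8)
The plan is to show that, under Assumption \ref{assump1}, the intermediate error $e_h:=u_h^*-\bar u_h$ actually lies in the smaller space $V_h$, and then to rerun the proof of Theorem \ref{T3.2} essentially verbatim, the only change being that the supremum bounding the term $(\nabla\bar\varphi,\nabla S_he_h)_{L^2(\Omega)}$ is taken over $V_h$ instead of $U_h$.

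First I would verify that $e_h\equiv 0$ on $\tilde\Gamma$. Fix $j\in\mathbb{H}$. By Lemma \ref{L2.5} there is $\rho_j>0$ such that $\bar u$ equals $a$ or $b$ on $\{x\in\Gamma:|x-x_j|<\rho_j\}$. Shrinking $\tilde\rho_j$ if necessary so that $\tilde\rho_j\le\rho_j$, and decreasing $h_0$ so that for $h<h_0$ every boundary element meeting $\{x\in\Gamma:|x-x_j|<\tilde\rho_j\}$ is contained in $\{|x-x_j|<\rho_j\}$, both choices of the interpolant — the Casas--Raymond quasi-interpolant \eqref{eq:u*1} and the modified Lagrange interpolant \eqref{eq:u*2} — reduce to $\bar u$ on $\tilde\Gamma$, since each is constructed so that $u_h^*=\bar u$ on the active set. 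On the other hand, Assumption \ref{assump1} gives $\bar u_h=\bar u$ on $\tilde\Gamma$ for $h<h_0$. Hence $e_h=u_h^*-\bar u_h=0$ on $\tilde\Gamma$, i.e.\ $e_h\in V_h$.

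Next I would repeat the argument of Theorem \ref{T3.2} line by line. The splitting \eqref{eq:generr1}, the stability estimate \eqref{eq:stability}, the identity \eqref{eq:u_h} (which holds for $u_h^*$ by construction, independently of the present assumption), and the chain of manipulations using \eqref{E3.1}, \eqref{VI} with $u=\bar u_h\in U_{ad}$ and $u_h=u_h^*\in U_{ad}^h$, integration by parts together with $e_h=S_he_h$ on $\Gamma$, \eqref{normal}, \eqref{E3.3} and \eqref{discharmonic}, remain unchanged and lead to \eqref{eq:generr3}. The only place where membership of $e_h$ in a trial space is exploited is the estimate of $(\nabla\bar\varphi,\nabla S_he_h)_{L^2(\Omega)}$ in \eqref{eq:generr5}; since now $e_h\in V_h$, this term is bounded by
\[
(\nabla\bar\varphi,\nabla S_he_h)_{L^2(\Omega)}\le \sup_{\psi_h\in V_h}\frac{\left|(\nabla\bar\varphi,\nabla S_h\psi_h)_{L^2(\Omega)}\right|}{\|\psi_h\|_{L^2(\Gamma)}}\,\|e_h\|_{L^2(\Gamma)}.
\]
Applying Young's inequality exactly as in the passage from \eqref{eq:generr5} to \eqref{eq:generr4}, and then combining with \eqref{eq:generr1} and \eqref{eq:stability}, yields the asserted estimate.

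I do not expect a genuine obstacle here; the one point that needs care is the first step — choosing $\tilde\rho_j$ and the threshold $h_0$ compatibly with both Lemma \ref{L2.5} and the locality of the two interpolation operators, so that $u_h^*$ really does coincide with $\bar u$ on the whole of $\tilde\Gamma$. Once $e_h\in V_h$ is established, the remainder is a verbatim transcription of the proof of Theorem \ref{T3.2} with $U_h$ replaced by $V_h$ in the single supremum that appears.
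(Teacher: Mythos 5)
Your proposal is correct and follows essentially the same route as the paper: the paper simply asserts that $e_h=u_h^*-\bar u_h\in V_h$ ``is clear'' from Assumption \ref{assump1} and then reruns the proof of Theorem \ref{T3.2} with the supremum taken over $V_h$ instead of $U_h$, which is exactly what you do. Your explicit verification that $u_h^*=\bar u$ near the corners $x_j$, $j\in\mathbb{H}$ (via Lemma \ref{L2.5} and the construction of the interpolants, after adjusting $\tilde\rho_j$ and $h_0$) is a welcome elaboration of a step the paper leaves implicit.
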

\begin{proof}
Since $e_h=u_h^*-\bar u_h\in V_h$ due to Assumption \ref{assump1}, the result can be obtained in the same way as in the proof of Theorem \ref{T3.2} just by replacing
\[
(\nabla \bar\varphi,\nabla S_he_h)_{L^2(\Omega)}\leq \sup_{\psi_h\in U_h}\frac{\left|(\nabla \bar\varphi,\nabla S_h\psi_h)_{L^2(\Omega)}\right|}{\|\psi_h\|_{L^2(\Gamma)}}\|e_h\|_{L^2(\Gamma)}
\]
in \eqref{eq:generr5} by
\[
(\nabla \bar\varphi,\nabla S_he_h)_{L^2(\Omega)}\leq \sup_{\psi_h\in V_h}\frac{\left|(\nabla \bar\varphi,\nabla S_h\psi_h)_{L^2(\Omega)}\right|}{\|\psi_h\|_{L^2(\Gamma)}}\|e_h\|_{L^2(\Gamma)}.
\]
\end{proof}

Next, we are concerned with discretization error estimates for the critical term in the general estimate of Theorem \ref{T5.2}. First, we deal with estimates for general quasi-uniform meshes. Afterwards we show improved estimates if we assume $O(h^2)$-irregular meshes.

\begin{lemma}\label{L5.3} Let $y_\Omega\in W^{1,p^*}(\Omega)$ for some $p^*>2$. Then there is the estimate
\[
\sup_{\psi_h\in V_h}\frac{\left|(\nabla \bar\varphi,\nabla S_h\psi_h)_{L^2(\Omega)}\right|}{\|\psi_h\|_{L^2(\Gamma)}}\le
{ch^{s}
{|\log h|^r}\quad \forall s\in\mathbb{R}\mbox{ such that }s<\Lambda-\frac12\mbox{ and } s\leq 1,}
\]
where $r$ is equal to one for $\Lambda-1/2\in (1,3/2]$ and equal to zero else.
\end{lemma}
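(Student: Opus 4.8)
The plan is to mimic the structure of the proof of Lemma \ref{L4.3}, but exploiting the extra freedom that $\psi_h\in V_h$ vanishes on $\tilde\Gamma$, i.e., near the nonconvex corners $x_j$ with $j\in\mathbb{H}$. As in \eqref{eq:local3}, I would first write, using that $S_h\psi_h$ is discrete harmonic and the orthogonality of the Ritz projection,
\[
(\nabla\bar\varphi,\nabla S_h\psi_h)_{L^2(\Omega)}=(\nabla(\bar\varphi-R_h\bar\varphi),\nabla\tilde S_h\psi_h)_{L^2(\Omega)},
\]
where $\tilde S_h$ is the zero-extension operator, and then split $\bar\varphi=\bar\varphi_{\reg}+\bar\varphi_{\sing}$ according to Lemma \ref{C4.1r} (applicable here since $y_\Omega\in W^{1,p^*}(\Omega)$ and we are in the control constrained case). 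The regular part $\bar\varphi_{\reg}\in W^{3,q}(\Omega)\hookrightarrow W^{2,\infty}(\Omega)$ contributes an $O(h)$ term exactly as in \eqref{eq:local4} and \eqref{eq:local12}, so the whole game is the singular part.

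For the singular part I would treat each corner separately. For corners $x_j$ with $\lambda_j>1$ (equivalently, convex-ish corners where $\Lambda_j=\lambda_j$), or for nonconvex corners with $c_{j,1}\neq 0$ but $j\notin\mathbb H$ — actually $j\in\mathbb H$ precisely when $\lambda_j<1$ and $c_{j,1}\neq 0$ — the argument is as in Lemma \ref{L4.3}: if $c_{j,1}\neq 0$ and $\lambda_j>1$ the leading singular term is $r_j^{\lambda_j}$, and if $c_{j,1}=0$ and $\lambda_j<1$ the leading term is $r_j^{2\lambda_j}$; in either case the relevant exponent is $\Lambda_j>1$, and the dyadic decomposition into strips $\Omega_J$ with the local $W^{1,\infty}$-estimates \eqref{eq:local6}, the weighted estimates \eqref{eq:local9}--\eqref{eq:local10} (now with the exponent $\Lambda_j$ in place of $\lambda$), and \eqref{eq:local11} gives a bound $ch^{\min\{1,\Lambda_j-1/2\}}|\log h|\,\|\psi_h\|_{L^2(\Gamma)}$. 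The point that makes $\Lambda$ rather than $\lambda$ appear is that for $j\in\mathbb H$ the singular term $c_{j,1}\xi_j r_j^{\lambda_j}\sin(\lambda_j\theta_j)$, which would only give order $\lambda_j-1/2<1/2$, is now harmless: since $\psi_h\equiv 0$ on $\tilde\Gamma\supset\{|x-x_j|<\tilde\rho_j\}\cap\Gamma$, the estimate \eqref{eq:local7} gives $\|\nabla\tilde S_h\psi_h\|_{L^1(\Omega_J)}\le c\|\psi_h\|_{L^1(\partial\Omega_J'\cap\Gamma)}=0$ on all strips $\Omega_J$ that are contained in $\tilde\Gamma$'s neighborhood of $x_j$, so only the strips at distance $\gtrsim\tilde\rho_j$ from $x_j$ — where $\bar\varphi_{\sing}$ is smooth — contribute, and there the estimate is clearly $O(h)$ at worst; meanwhile the genuinely singular but higher-order term $c_{j,2}\xi_j r_j^{2\lambda_j}\sin(2\lambda_j\theta_j)$ near $x_j$ (with $2\lambda_j>1$ possible only if $\lambda_j>1/2$, which always holds) is handled by the strip argument and contributes $ch^{\min\{1,2\lambda_j-1/2\}}|\log h|$, and one checks $2\lambda_j-1/2\ge\lambda_j-1/2$ and more importantly $2\lambda_j>1$ forces this to be at least close to order $1$ — wait, this needs $2\lambda_j\ge 3/2$, i.e. $\lambda_j\ge 3/4$, which need not hold.

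So the main obstacle, and the place requiring the most care, is to verify that for $j\in\mathbb H$ the \emph{second} singular term $r_j^{2\lambda_j}$ (which has exponent $2\lambda_j=\Lambda_j$) indeed produces exactly the claimed rate $\min\{\Lambda_j-1/2,1\}$ with the stated logarithmic factor, and that the vanishing of $\psi_h$ on $\tilde\Gamma$ is enough to kill the first term's contribution through \eqref{eq:local7} on \emph{every} strip near $x_j$ (one must make sure the strips $\Omega_J$ near $x_j$ with $d_J\lesssim\tilde\rho_j$ have $\partial\Omega_J'\cap\Gamma\subset\tilde\Gamma$, which follows once $h$ is small enough so that $\Omega_J'\subset\{|x-x_j|<\tilde\rho_j\}$ for such $J$). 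After assembling the per-corner bounds, taking the minimum over all corners yields the exponent $\Lambda-\tfrac12$ (capped at $1$) with $r=1$ iff $\Lambda-\tfrac12\in(1,\tfrac32]$ — though, as in Lemma \ref{L4.3}, when $\Lambda-\tfrac12\le 1$ the $|\log h|$ can be absorbed into a slightly smaller power of $h$, consistent with the statement $r=0$ in that regime. Finally, combining with the $O(h)$ bound for $\bar\varphi_{\reg}$ and the embedding $L^2(\Gamma)\hookrightarrow L^1(\Gamma)$ gives the assertion.
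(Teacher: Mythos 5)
There is a genuine gap, and you flag it yourself: your treatment of the corners $j\in\mathbb{H}$ is inconsistent. You correctly observe that the vanishing of $\psi_h$ on $\tilde\Gamma$ makes $\tilde S_h\psi_h\equiv 0$ on all elements near such a corner, so the near-field contribution of the \emph{first} singular term $c_{j,1}\xi_j r_j^{\lambda_j}\sin(\lambda_j\theta_j)$ disappears. But you then revert to the dyadic strip argument for the \emph{second} singular term $r_j^{2\lambda_j}$ at the same corner, which only yields $h^{\min\{1,2\lambda_j-1/2\}}|\log h|$. This is not enough: a corner $j\in\mathbb{H}$ is \emph{excluded} from the minimum defining $\Lambda$ in \eqref{eq:Lambda} (its $\Lambda_j=\lambda_j<1$), so the lemma's claimed exponent $\Lambda-\tfrac12$ can strictly exceed $2\lambda_j-\tfrac12$ whenever $\lambda_j<3/4$, and your bound would then be too weak. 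The resolution is to apply the localization argument to the \emph{entire} singular part attached to an $\mathbb{H}$-corner, not just its leading term: since $\tilde S_h\psi_h\equiv 0$ in a fixed neighborhood $\tilde\Omega$ of $x_j$, the pairing reduces to an integral over $\Omega\setminus\tilde\Omega$, where every term $r_j^{m\lambda_j}$ is smooth; the singularity then enters only through the pollution term $\|\cdot\|_{L^2}$ in the local $W^{1,\infty}$ estimate of \cite[Corollary 1]{demlow2012}, and the global $L^2$ Ritz error of a function with an $r^{\lambda}$ singularity is $O(h^{s})$ for all $s<2\lambda$, $s\le 2$, hence $o(h)$ since $\lambda>1/2$. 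Your "clearly $O(h)$ at worst" in the far field silently skips exactly this pollution step.

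The paper's proof packages this more cleanly than a corner-by-corner strip decomposition: it introduces a cut-off $\eta_1$ supported near the $\mathbb{H}$-corners, writes $\bar\varphi=\eta_0\bar\varphi+\eta_1\bar\varphi$, applies Lemma \ref{L4.3} verbatim to $\eta_0\bar\varphi$ (whose singular content is governed by $\Lambda$, since the $\mathbb{H}$-singularities are cut away), and for $\eta_1\bar\varphi$ uses a \emph{single} local $W^{1,\infty}$ estimate on the fixed region $\Omega\setminus\tilde\Omega$ together with the global bound $\|\eta_1\bar\varphi-R_h(\eta_1\bar\varphi)\|_{L^2(\Omega)}\le ch^{s}$ for $s<2\lambda$, $s\le 2$ — no dyadic strips are needed for that part at all. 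Your outline would become a proof if you replaced the strip treatment of the higher-order singular terms at $\mathbb{H}$-corners by this far-field-plus-pollution argument; as written, the step you yourself label "the main obstacle" is left open, and the route you propose for it (the strip argument) would fail to deliver the stated rate.
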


\begin{proof}
To be able to localize the effects in the neighborhood of all corners $x_j$ with $j\in\mathbb{H}$, we introduce a cut-off function $\eta_1$ which is equal to one in a fixed neighborhood of these corners and decays smoothly. In addition, we set $\eta_0=1-\eta_1$. Then we infer for the quantity of interest
\begin{align}
	\sup_{\psi_h\in V_h}&\frac{\left|(\nabla \bar\varphi,\nabla S_h\psi_h)_{L^2(\Omega)}\right|}{\|\psi_h\|_{L^2(\Gamma)}}\notag\\
	&\le
	\sup_{\psi_h\in V_h}\frac{\left|(\nabla (\eta_0\bar\varphi),\nabla S_h\psi_h)_{L^2(\Omega)}\right|}{\|\psi_h\|_{L^2(\Gamma)}}+
	\sup_{\psi_h\in V_h}\frac{\left|(\nabla (\eta_1\bar\varphi),\nabla S_h\psi_h)_{L^2(\Omega)}\right|}{\|\psi_h\|_{L^2(\Gamma)}}\label{eq:sup}
\end{align}
For the first term on the right hand side of this inequality, we directly apply Lemma \ref{L4.3}
to conclude
\begin{align}
\sup_{\psi_h\in V_h}\frac{\left|(\nabla (\eta_0\bar\varphi),\nabla S_h\psi_h)_{L^2(\Omega)}\right|}{\|\psi_h\|_{L^2(\Gamma)}}&\leq ch^{s}
|\log h|^r\\ & \forall s\in\mathbb{R}\mbox{ such that }s<\Lambda-\tfrac12\mbox{ and } s\leq 1,
\end{align}
where $r$ is equal to one for $\Lambda-1/2\in (1,3/2]$ and equal to zero else, having in mind the regularity results of Lemma \ref{C4.1r} for the adjoint state and noting that the singular terms coming from the corners $x_j$ with $j\in\mathbb{H}$ do not have any influence due to the cut-off function $\eta_0$. To deal with the second term in \eqref{eq:sup}, let $\tilde S_h$ denote the extension operator which extends a piecewise linear function $\psi_h$ on the boundary by zero to a function in $Y_h$.
Thus, $\tilde S_h\psi_h$ is equal to zero in $\tilde \Omega:=\{x\in \Omega:\ |x-x_j|<\tilde \rho_j/2 \mbox{ if }j\in\mathbb{H}\}$ for any $\psi_h\in V_h$.
Moreover, let $R_h$ be the operator that maps a function in $H^1_0(\Omega)$ to its Ritz-projection in $Y_{0,h}$. Due the properties of the discrete harmonic extension $S_h$
and the Ritz-projection $R_h$, we obtain
\begin{align}
	(\nabla (\eta_1\bar\varphi),\nabla S_h\psi_h)_{L^2(\Omega)}&=(\nabla (\eta_1\bar\varphi-R_h (\eta_1\bar\varphi)),\nabla S_h\psi_h)_{L^2(\Omega)}\notag\\
	&=(\nabla (\eta_1\bar\varphi-R_h(\eta_1\bar\varphi)),\nabla \tilde S_h\psi_h)_{L^2(\Omega)}\notag\\
	&=(\nabla (\eta_1\bar\varphi-R_h(\eta_1\bar\varphi)),\nabla \tilde S_h\psi_h)_{L^2(\Omega\backslash\tilde{\Omega})}.\label{eq:5.6.1}
\end{align}
By applying the H\"older inequality, local $W^{1,\infty}$-discretization error estimates for the Ritz-projection from \cite[Corollary 1]{demlow2012}, and \eqref{eq:local1}, we obtain
\begin{align}
	(\nabla (\eta_1\bar\varphi&-R_h(\eta_1\bar\varphi)),\nabla \tilde S_h\psi_h)_{L^2(\Omega\backslash\tilde{\Omega})}\\ &\leq\|\nabla (\eta_1\bar\varphi-R_h(\eta_1\bar\varphi))\|_{L^\infty(\Omega\backslash\tilde{\Omega})}\|\nabla \tilde S_h\psi_h\|_{L^1(\Omega\backslash\tilde{\Omega})}\notag\\
	&\leq c\left(\|\nabla(\eta_1\bar\varphi-I_h(\eta_1\bar{\varphi}))\|_{L^\infty(\Omega\backslash\tilde{\Omega}')}+\|\eta_1\bar\varphi-R_h(\eta_1\bar\varphi)\|_{L^2(\Omega)}\right)\|\psi_h\|_{L^1(\Gamma)},\label{eq:5.6.2}
\end{align}
where $\tilde \Omega':=\{x\in \Omega:\ |x-x_j|<\tilde \rho_j/4 \mbox{ if }j\in\mathbb{H}\}$.
Having regard to the regularity results for the optimal adjoint state from Lemma \ref{C4.1r} and by using standard interpolation error estimates and a standard finite element error estimate, we deduce
\begin{equation}\label{eq:5.6.3}
	\|\nabla(\eta_1\bar\varphi-I_h(\eta_1\bar{\varphi}))\|_{L^\infty(\Omega\backslash\tilde{\Omega}')}+\|\eta_1\bar\varphi-R_h(\eta_1\bar\varphi)\|_{L^2(\Omega)}\leq c\left(h+h^s\right)
\end{equation}
which is valid for all $s\in\mathbb{R}$ such that $s<2\lambda$ and $s\leq2$. Combining \eqref{eq:sup}--\eqref{eq:5.6.3} ends the proof.
\end{proof}

\begin{lemma}\label{L5.7} Let $y_\Omega\in W^{1,p^*}(\Omega)$ for some $p^*>2$ and suppose further that $\{\mathcal{T}_h\}$ is a family of $O(h^2)$-irregular meshes. Then it holds
\[
	\sup_{\psi_h\in V_h}\frac{\left|(\nabla \bar\varphi,\nabla S_h\psi_h)_{L^2(\Omega)}\right|}{\|\psi_h\|_{L^2(\Gamma)}}\le ch^{s}\] 
for all $s\in\mathbb{R}\mbox{ such that }s<\min\{\Lambda-\tfrac12,2
{\lambda}\}\text{ and } s\leq \tfrac32.$
\end{lemma}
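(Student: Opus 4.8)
The goal is to upgrade Lemma~\ref{L5.3} to order $h^s$ with $s<\min\{\Lambda-\tfrac12,2\lambda\}$, $s\le\tfrac32$, on $O(h^2)$-irregular meshes, exactly as Lemma~\ref{L4.4} upgrades Lemma~\ref{L4.3} in the unconstrained case. So I would follow the architecture of the proof of Lemma~\ref{L5.3}: split $\bar\varphi = \eta_0\bar\varphi + \eta_1\bar\varphi$ with the cut-off $\eta_1$ supported near the corners $x_j$, $j\in\mathbb{H}$, and $\eta_0 = 1-\eta_1$. For the $\eta_0$-part, the singular functions at corners in $\mathbb{H}$ are cut off, so the relevant regularity is governed by $\Lambda$ (via Lemma~\ref{C4.1r}), and I can invoke Lemma~\ref{L4.4} verbatim to get $h^s$ for all $s<\min\{\tfrac32,\Lambda-\tfrac12\}$ --- no new work there. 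The whole burden is the $\eta_1$-part, localized near a corner $x_j$ with $\lambda_j<1$ and $c_{j,1}\neq 0$.

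For the $\eta_1$-part I would repeat the identity~\eqref{eq:5.6.1}: because $S_h\psi_h$ is discrete harmonic and $R_h$ is the Ritz projection,
\[
(\nabla(\eta_1\bar\varphi),\nabla S_h\psi_h)_{L^2(\Omega)} = (\nabla(\eta_1\bar\varphi - R_h(\eta_1\bar\varphi)),\nabla\tilde S_h\psi_h)_{L^2(\Omega\setminus\tilde\Omega)},
\]
where $\tilde S_h\psi_h$ vanishes on $\tilde\Omega$ for $\psi_h\in V_h$. The crucial point is that on $\Omega\setminus\tilde\Omega$ the function $\eta_1\bar\varphi$ stays \emph{away} from the corner $x_j$, so there its restriction is smooth --- in fact $\eta_1\bar\varphi\in W^{3,q}(\Omega\setminus\tilde\Omega)$ for some $q>2$, using that the leading singular exponent $m\lambda_j$ at such corners in the expansion of Lemma~\ref{C4.1r} does not obstruct $W^{3,q}$ regularity away from $x_j$, and that $\eta_1\bar\varphi$ has zero trace. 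Then, following the strategy of Lemma~\ref{L4.4}, I would apply Lemma~\ref{lemma:DGH} (the Deckelnick--G\"unther--Hinze superconvergence estimate) to $f = \eta_1\bar\varphi$ and $\phi_h = S_h\psi_h$ on the $O(h^2)$-irregular mesh, getting
\[
\bigl|(\nabla(\eta_1\bar\varphi - I_h(\eta_1\bar\varphi)),\nabla S_h\psi_h)\bigr| \le c\bigl(h^2\|S_h\psi_h\|_{H^1(\Omega)} + h^{3/2}\|\psi_h\|_{L^2(\Gamma)}\bigr),
\]
and then using the stability bound $\|S_h\psi_h\|_{H^1(\Omega)}\le ch^{-1/2}\|\psi_h\|_{L^2(\Gamma)}$ established inside the proof of Lemma~\ref{L4.4} (via the zero-extension operator and \eqref{eq:local1}). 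This yields $h^{3/2}\|\psi_h\|_{L^2(\Gamma)}$ for the $\eta_1$-contribution. Combining with the $\eta_0$-part gives $h^s$ for $s<\min\{\tfrac32,\Lambda-\tfrac12,2\lambda\}$ --- the $2\lambda$ entering because near a corner in $\mathbb{H}$ the \emph{second} singular term $r_j^{2\lambda_j}\sin(2\lambda_j\theta_j)$ lies in $W^{2,p}$ but not $W^{3,p}$, limiting what the $\eta_1$-part can deliver even after cutting off the first singularity (which is handled by the vanishing of $\psi_h$ on $\tilde\Gamma$), much as $s<2\lambda$ surfaces in~\eqref{eq:5.6.3}.

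\textbf{The main obstacle.} The delicate step is verifying that Lemma~\ref{lemma:DGH} can legitimately be applied to $f=\eta_1\bar\varphi$ even though $\eta_1\bar\varphi$ is \emph{not} globally $W^{3,q}$ --- it is only $W^{3,q}$ on $\Omega\setminus\tilde\Omega$ and the strongest singularity sits inside $\tilde\Omega$ where $\psi_h\equiv 0$ (so $\tilde S_h\psi_h\equiv 0$ on a slightly smaller $\tilde\Omega$, but $S_h\psi_h$ itself need not vanish there). One must argue, as in~\eqref{eq:5.6.1}, that the full contribution reduces to an integral over $\Omega\setminus\tilde\Omega$, and then apply the superconvergence estimate on the sub-triangulation of $\Omega\setminus\tilde\Omega$, which is again $O(h^2)$-irregular since removing a fixed number of elements near $x_j$ preserves the structure; the boundary terms produced on $\partial(\Omega\setminus\tilde\Omega)\cap\Omega$ by the localized version of Lemma~\ref{lemma:DGH} must be absorbed using a discrete trace/Sobolev inequality as in~\eqref{eq:Omegah22}, and they are lower order. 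I expect the remaining computations --- weighted interpolation estimates for $r_j^{2\lambda_j}$-type terms on the dyadic strips, if one prefers that route over Lemma~\ref{lemma:DGH} applied to $\eta_1\bar\varphi$ directly --- to be routine repetitions of the arguments in the proofs of Lemmata~\ref{L4.3} and~\ref{L4.4}, and I would keep the write-up terse, citing those proofs for the recurring steps.
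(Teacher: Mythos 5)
Your overall architecture is the right one and matches the paper: split $\bar\varphi=\eta_0\bar\varphi+\eta_1\bar\varphi$, handle the $\eta_0$-part by Lemma~\ref{lemma:DGH} exactly as in Lemma~\ref{L4.4} (getting $s<\Lambda-\tfrac12$, $s\le\tfrac32$, since the $\mathbb{H}$-singularities are cut off), and exploit $\psi_h\in V_h$ to localize the $\eta_1$-part away from the corners in $\mathbb{H}$. The gap is in your treatment of the $\eta_1$-part, which is the only genuinely new step of the lemma. The two tools you invoke there are incompatible. If you use the Ritz-projection identity~\eqref{eq:5.6.1}, the integral localizes to $\Omega\setminus\tilde\Omega$, where $\eta_1\bar\varphi$ and $I_h(\eta_1\bar\varphi)$ are \emph{identically zero} (since $\operatorname{supp}\eta_1\subset\tilde\Omega'$); there is no interpolation error left to superconverge, and what survives is the nonlocal pollution term $R_h(\eta_1\bar\varphi)\big|_{\Omega\setminus\tilde\Omega}$, which Lemma~\ref{lemma:DGH} says nothing about. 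If instead you pass to the interpolant (using that $I_h(\eta_1\bar\varphi)\in Y_{0,h}$ and $S_h\psi_h$ is discrete harmonic), the error $\eta_1\bar\varphi-I_h(\eta_1\bar\varphi)$ concentrates in $\tilde\Omega'$, where $S_h\psi_h$ does \emph{not} vanish (only $\tilde S_h\psi_h$ does) and where $\eta_1\bar\varphi\notin W^{3,q}$, so Lemma~\ref{lemma:DGH} is inapplicable; restricting it to the sub-triangulation of $\Omega\setminus\tilde\Omega$, as you suggest, gives the trivial estimate of a vanishing integrand and misses the actual contribution. Your displayed bound $h^2\|S_h\psi_h\|_{H^1}+h^{3/2}\|\psi_h\|_{L^2(\Gamma)}$ for the $\eta_1$-part is therefore unjustified.

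The correct mechanism — and the reason $2\lambda$ appears — is different from the one you give. It is not the second singular term $r_j^{2\lambda_j}$ failing to be $W^{3,p}$; it is the \emph{first} singular term $r_j^{\lambda_j}$ with $\lambda_j=\lambda<1$ limiting the global $L^2$-accuracy of the Ritz projection of $\eta_1\bar\varphi$ to $O(h^{2\lambda-\varepsilon})$ (Aubin--Nitsche with reduced regularity on both the primal and dual side). Concretely: after~\eqref{eq:5.6.1}, apply H\"older and the local $W^{1,\infty}$-estimate of \cite[Corollary 1]{demlow2012} on $\Omega\setminus\tilde\Omega$ together with~\eqref{eq:local1}; the local interpolation term vanishes because $\eta_1\bar\varphi$ and $I_h(\eta_1\bar\varphi)$ are zero on $\Omega\setminus\tilde\Omega'$, and one is left with
\[
c\,\|\eta_1\bar\varphi-R_h(\eta_1\bar\varphi)\|_{L^2(\Omega\setminus\tilde\Omega')}\,\|\psi_h\|_{L^1(\Gamma)}\le c\,h^{s}\|\psi_h\|_{L^2(\Gamma)},\quad s<2\lambda,\ s\le 2,
\]
by a standard (non-superconvergent) $L^2$ finite element error estimate. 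No superconvergence structure is used for the $\eta_1$-part at all; the $O(h^2)$-irregularity of the mesh is consumed entirely by the $\eta_0$-part. As written, your proof neither establishes the $h^{3/2}$ claim for the $\eta_1$-part nor derives the $2\lambda$ restriction, so the key step is missing.
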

\begin{proof}
As before, we introduce the circular sectors
	\begin{align*}
	&\tilde \Omega:=\{x\in \Omega:\ |x-x_j|<\tilde \rho_j/2 \mbox{ if }j\in\mathbb{H}\},\\
	&\tilde \Omega':=\{x\in \Omega:\ |x-x_j|<\tilde \rho_j/4 \mbox{ if }j\in\mathbb{H}\}.
	\end{align*}
	For technical reasons we also need the circular sector
	\[\tilde \Omega'':=\{x\in \Omega:\ |x-x_j|<\tilde \rho_j/8 \mbox{ if }j\in\mathbb{H}\}.
	\]
	Let the operators $\tilde S_h$ and $R_h$ be defined as in the proof of Lemma \ref{L5.3}. Moreover, let $\eta_1$ be a smooth cut-off function which is equal to one in $\tilde{\Omega}''$ with $\operatorname{supp} \eta_1\subset \tilde{\Omega}'$.
In addition, we choose $\eta_1$ such that $\operatorname{supp}I_h\eta_1\subset \tilde\Omega'$ which is possible without any restriction for $h$ small enough. We set $\eta_0:=1-\eta_1$. Analogously to the foregoing proof, we infer
	\begin{align}
	(\nabla &\bar\varphi,\nabla S_h\psi_h)_{L^2(\Omega)}=(\nabla (\eta_1\bar\varphi),\nabla S_h\psi_h)_{L^2(\Omega)}
	+(\nabla (\eta_0\bar\varphi),\nabla S_h\psi_h)_{L^2(\Omega)}\notag\\
	&=(\nabla (\eta_1\bar\varphi-R_h(\eta_1\bar \varphi)),\nabla \tilde S_h\psi_h)_{L^2(\Omega\backslash\tilde{\Omega})}+(\nabla (\eta_0\bar\varphi-I_h(\eta_0\bar{\varphi})),\nabla S_h\psi_h)_{L^2(\Omega)}.\label{eq:5.7.1}
	\end{align}
	Observe that $\eta_0\bar{\varphi}$ is equal to zero in a fixed neighborhood of all corners $x_j$ with
$j\in\mathbb{H}$. Consequently, Lemma \ref{lemma:DGH}, applied as in the proof of Lemma \ref{L4.4}, yields for the latter term in \eqref{eq:5.7.1}
	\[
		(\nabla (\eta_0\bar\varphi-I_h(\eta_0\bar{\varphi})),\nabla S_h\psi_h)_{L^2(\Omega)}\leq c h^{s}\|\psi_h\|_{L^2(\Gamma)}
	\]
with $s\in\mathbb{R}$ such that $s<\Lambda-\frac12$ and $s\leq \frac32$.
By applying the H\"older inequality, local $W^{1,\infty}$-discretization error estimates for the Ritz-projection from \cite[Corollary 1]{demlow2012}, and \eqref{eq:local1}, we obtain for the first term in \eqref{eq:5.7.1}
	\begin{align*}
		(\nabla &(\eta_1\bar\varphi-R_h(\eta_1\bar \varphi)),\nabla \tilde S_h\psi_h)_{L^2(\Omega\backslash\tilde{\Omega})}\\ &\leq\|\nabla (\eta_1\bar\varphi-R_h(\eta_1\bar \varphi))\|_{L^\infty(\Omega\backslash\tilde{\Omega})}\|\nabla \tilde S_h\psi_h\|_{L^1(\Omega\backslash\tilde{\Omega})}\\ &\leq c\left(\|\nabla(\eta_1\bar\varphi-I_h(\eta_1\bar\varphi))\|_{L^\infty(\Omega\backslash\tilde{\Omega}')}+\|\eta_1\bar\varphi-R_h(\eta_1\bar\varphi)\|_{L^2(\Omega\backslash\tilde{\Omega}')}\right)\|\psi_h\|_{L^1(\Gamma)}\\
		&\leq c\|\eta_1\bar\varphi-R_h(\eta_1\bar\varphi)\|_{L^2(\Omega\backslash\tilde{\Omega}')}\|\psi_h\|_{L^1(\Gamma)},
	\end{align*}
	where we used that $\eta_1\bar\varphi$
and $I_h\left(\eta_1 \bar\varphi\right)$ are equal to zero in $\Omega\backslash\tilde{\Omega}'$. Usual error estimates for the Ritz-projection and a standard embedding yield
	\[
			(\nabla (\eta_1\bar\varphi-R_h(\eta_1\bar \varphi)),\nabla \tilde S_h\psi_h)_{L^2(\Omega\backslash\tilde{\Omega})}\leq h^{
{s}}\|\psi_h\|_{L^2(\Gamma)},
	\]
which is valid for all $s\in\mathbb{R}$ such that $s<2\lambda$ and $s\leq2$. This ends the proof.
\end{proof}

Finally, an application of Theorem \ref{T5.2}, and Lemmata \ref{L5.1}, \ref{L5.3} and \ref{L5.7}, yield the results of Theorem \ref{T5main} where $\lambda<1$ and Assumption \ref{assump1} is satisfied.

\subsection{Proof of Theorem \ref{T5.1}}\label{sec:proof2}
In this subsection we show the results of Theorem \ref{T5.1}. That is, we show a convergence rate close to $1/2$ for the optimal controls and states in the constrained case if the domain is non-convex and even if the structural Assumption
\ref{assump1} does not hold.
For that purpose, let us recall that $\{x_j\}$ denotes the corners of $\Gamma$, $\{x_{\Gamma}^i\}$ is the set of boundary nodes of the mesh and $\{e_{i}\}$ is the basis of $U_h$ such that $e_{i}(x_{\Gamma}^k)=\delta_{ik}$. 
Thus, every function $u_h\in U_h$ can be written as
\[
	u_h=\sum_{i=1}^{N(h)}  u_{h,i}e_{i}\quad \text{with } u_{h,i}=u_h(x_{\Gamma}^i).
\]
By testing the discrete variational inequality appropriately, we deduce
\[
\bar u_{h,i}=\begin{cases} a & \text{if } \displaystyle\int_{\Gamma}(\nu \bar u_h -\partial_n^h\bar{\varphi}_h) e_{i}>0,\\ \\
b & \text{if } \displaystyle\int_{\Gamma}(\nu \bar u_h -\partial_n^h\bar{\varphi}_h)e_{i}<0. \end{cases}
\]

\begin{lemma}\label{L5.9}
For each interior angle $\omega_j>\pi$, where $c_{j,1}$ from \eqref{E3.5} is unequal to zero, there are two constants $\tilde{\rho}_{1,j}$ and $\tilde{\rho}_{2,j}$ greater than zero such that
\[
\bar u_h(x_{\Gamma}^i)=\begin{cases} a & \text{if } c_{j,1}>0\\
b & \text{if } c_{j,1}<0 \end{cases}
\]
for all nodes $x_{\Gamma}^i$ with $|x_{\Gamma}^i-x_j|\in [\tilde{\rho}_{1,j} h |\log h|^{1/2} ,\tilde{\rho}_{2,j}]$.
\end{lemma}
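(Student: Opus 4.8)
The plan is to show that near a nonconvex corner $x_j$ with $c_{j,1}\neq 0$, the quantity $\nu\bar u_h - \partial_n^h\bar\varphi_h$ tested against a nodal basis function $e_i$ keeps a fixed sign, so that the discrete projection formula forces $\bar u_h(x_\Gamma^i)$ to the corresponding bound. The starting point is Lemma \ref{L2.5}, which tells us that in the continuous problem the control equals $a$ (if $c_{j,1}>0$) or $b$ (if $c_{j,1}<0$) in a full neighborhood $|x-x_j|<\rho_j$, and that $\partial_n\bar\varphi$ has a genuine pole of type $r_j^{\lambda_j-1}$ there (with $\lambda_j-1<0$) with sign dictated by $c_{j,1}$. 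Concretely, $\nu\bar u - \partial_n\bar\varphi$ is, in that neighborhood, $\nu a$ minus something that behaves like $c_{j,1}\lambda_j r_j^{\lambda_j-1}$ up to lower-order and regular terms, hence has a strictly positive (resp.\ negative) sign that \emph{blows up} as $r_j\to 0$. So the continuous datum $\bar d = \nu\bar u-\partial_n\bar\varphi$ satisfies a lower bound of the form $|\bar d(x)|\geq c\, r_j^{\lambda_j-1}$, with the right sign, on an annulus $0<r_j<\rho_j$.

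Next I would transfer this to the discrete level. The key is to compare $\partial_n^h\bar\varphi_h$ with $\partial_n\bar\varphi$. Write $\partial_n^h\bar\varphi_h - \partial_n\bar\varphi = (\partial_n^h\bar\varphi_h - \partial_n^h R_h\bar\varphi) + (\partial_n^h R_h\bar\varphi - \partial_n\bar\varphi)$, where $R_h\bar\varphi$ is the Ritz projection. The first difference is controlled by the already-established $L^2(\Gamma)$ error estimate for the control and state ($\|\bar u-\bar u_h\|+\|\bar y-\bar y_h\|\leq ch^{1/2}|\log h|^{1/4}$ in the worst case, but here one needs a pointwise/annulus version), and the second is a discrete-normal-derivative error for the Ritz projection, which by the local $W^{1,\infty}$-type estimates of \cite{demlow2012} together with the weighted estimates used in Lemma \ref{L4.3} (in particular the bound \eqref{eq:local14}, of order $h^{\min\{1,\lambda-1/2\}}|\log h|$, and the interpolation/Ritz estimates on the dyadic strips $\Omega_J$) decays like a positive power of $\sigma = r_j + d_I \sim r_j$ once $r_j \gtrsim h$. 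The upshot I expect is: for $r_j \geq \tilde\rho_{1,j} h|\log h|^{1/2}$ the error $|\partial_n^h\bar\varphi_h(x) - \partial_n\bar\varphi(x)|$ is strictly smaller than $\tfrac12 |\bar d(x)|$ on that range, because the singular lower bound $r_j^{\lambda_j-1}$ dominates the discretization error $h^{\alpha} r_j^{\lambda_j-1-\alpha}$ (for suitable $\alpha>0$) precisely when $r_j$ exceeds a threshold of order $h|\log h|^{1/2}$. The logarithmic factor and its power $1/2$ are exactly what comes out of the weighted $L^2$ Ritz estimate \eqref{eq:local10} and the $\sigma^{-1/2}$ weight; tracking these carefully gives the stated threshold $\tilde\rho_{1,j}h|\log h|^{1/2}$.

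Once this pointwise domination holds, I conclude as follows: for a boundary node $x_\Gamma^i$ with $|x_\Gamma^i - x_j|\in[\tilde\rho_{1,j}h|\log h|^{1/2}, \tilde\rho_{2,j}]$, the support of $e_i$ is contained in the annulus where $\nu\bar u_h - \partial_n^h\bar\varphi_h$ has the same strict sign as $-c_{j,1}$ (using $\nu\bar u_h$ is bounded while the singular term dominates, or more carefully using $\bar u_h \approx \bar u = a$ or $b$ there via the control error estimate on the annulus). Hence $\int_\Gamma(\nu\bar u_h-\partial_n^h\bar\varphi_h)e_i$ is strictly positive (if $c_{j,1}>0$) or strictly negative (if $c_{j,1}<0$), and the discrete variational inequality / discrete projection formula stated just before the lemma forces $\bar u_h(x_\Gamma^i)=a$, respectively $b$. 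The constant $\tilde\rho_{2,j}$ is simply chosen inside the continuous active-set radius $\rho_j$ from Lemma \ref{L2.5} (and inside $R_j$ so the singular expansion is valid), while $\tilde\rho_{1,j}$ absorbs all the discretization constants.

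The main obstacle I anticipate is the second step: obtaining a \emph{pointwise, annulus-localized} lower bound showing that the singular part of $\partial_n^h\bar\varphi_h$ genuinely dominates the full discretization error on the whole range $r_j\geq \tilde\rho_{1,j}h|\log h|^{1/2}$. This requires care because (i) one must combine the global $L^2(\Gamma)$ control/state error (which feeds into $\partial_n^h\bar\varphi_h-\partial_n^hR_h\bar\varphi$ via \eqref{normal}) with local weighted estimates, and (ii) near the corner the Ritz-projection error for the singular function is only weakly controlled, so the weighted norms with the $\sigma^{-1/2}$ (equivalently $r_j^{-1/2}$ for $r_j\gg h$) weight and the $|\log h|^{1/2}$ loss must be propagated exactly to land on the threshold $h|\log h|^{1/2}$ rather than, say, $h|\log h|$. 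I would organize this by splitting $\Gamma$ near $x_j$ into the same dyadic pieces $\Omega_J\cap\Gamma$ used in Lemma \ref{L4.3}, estimating $\partial_n^h\bar\varphi_h-\partial_n\bar\varphi$ strip by strip, and then comparing with the strip-wise lower bound $r_j^{\lambda_j-1}\sim d_J^{\lambda_j-1}$ of the continuous singular datum.
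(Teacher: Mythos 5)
Your overall strategy (show that $\int_\Gamma(\nu\bar u_h-\partial_n^h\bar\varphi_h)e_i$ has a fixed sign by letting the $r_j^{\lambda_j-1}$ pole dominate all error terms, with the threshold $h|\log h|^{1/2}$ coming from the weighted Ritz estimates on dyadic strips) is the right skeleton, and your treatment of the term $\partial_n^hR_h\bar\varphi-\partial_n\bar\varphi$ matches the paper's. But there is a genuine gap in the other half of your decomposition, the term $\partial_n^h\bar\varphi_h-\partial_n^hR_h\bar\varphi$. First, the rate $h^{1/2}|\log h|^{1/4}$ you invoke for it is Theorem \ref{T5.1}, which is \emph{proved using} Lemma \ref{L5.9}; the only non-circular input available here is Theorem \ref{main:cc}, i.e.\ $\|\bar y-\bar y_h\|_{L^2(\Omega)}\le ch^{s}$ with $s<\lambda-\tfrac12<\tfrac12$. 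Second, and more fundamentally, this term is controlled only globally: testing \eqref{normal} with $S_he_i$ gives $(\partial_n^h\bar\varphi_h-\partial_n^hR_h\bar\varphi,e_i)_{L^2(\Gamma)}=(\bar y-\bar y_h,S_he_i)_{L^2(\Omega)}\le c\|\bar y-\bar y_h\|_{L^2(\Omega)}h^{1/2}\sim h^{s+1/2}$ with $s+\tfrac12<1$, whereas the singular contribution you need it to be dominated by is $c_{j,1}d_J^{\lambda_j-1}\int_\Gamma e_i\sim d_J^{\lambda_j-1}h$. For a \emph{fixed} outer radius $d_J\sim\tilde\rho_{2,j}$ the error term is $h^{s+1/2}\gg h$, so it swamps the main term; even the (circular) rate $h^{1/2}|\log h|^{1/4}$ would only yield the sign condition on an annulus whose outer radius shrinks like a negative power of $|\log h|$, not on a fixed one. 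There is no obvious localization of $\|\bar y-\bar y_h\|$ that rescues this.

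The paper avoids this term entirely by a different device: it introduces the auxiliary continuous adjoint $\tilde\varphi$ solving $-\Delta\tilde\varphi=\bar y_h-y_\Omega$ with homogeneous Dirichlet data, for which $\bar\varphi_h$ \emph{is exactly} the Ritz projection $R_h\tilde\varphi$, so only the Ritz-projection error (your second term, which you do handle correctly) remains. The price is that one must show the singular coefficient of $\tilde\varphi$ at $x_j$ keeps the sign of $c_{j,1}$; this follows from the stress-intensity-factor representation $c_{j,1}=(\bar y-y_\Omega,\zeta_{j,1})_{L^2(\Omega)}$ (Nazarov--Plemenevsky) applied to the right-hand side $\bar y_h-y_\Omega$, together with the mere convergence $\|\bar y-\bar y_h\|_{L^2(\Omega)}\to0$ from Theorem \ref{main:cc}. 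Without this (or an equivalent) idea, your comparison of $\partial_n^h\bar\varphi_h$ with $\partial_n\bar\varphi$ does not close.
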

\begin{proof}
	In the following we focus only on one non-convex corner $x_j$. Without loss of generality let $c_{j,1}$ be greater than zero. {Hence the normal derivative of $\bar\varphi$ is negative, and the lower bound of the control is active, and $\nu\bar u-\partial_n\bar\varphi>0$ in the vicinity of this corner.} We need to show that there are two constants $\tilde{\rho}_{1,j}$ and $\tilde{\rho}_{2,j}$ such that
	\[\int_{\Gamma}(\nu \bar u_h -\partial_n^h\bar{\varphi}_h) e_{h,i}>0\]
	for all nodes $x_{h,i}$ with $|x_{h,i}-x_j|\in [\tilde{\rho}_{1,j} h |\log h|^{1/2} ,\tilde{\rho}_{2,j}]$.
According to \cite[Theorem 3.4]{Nazarov-Plemenevsky-1994},
we know that
	\[
	c_{j,1}=(\bar y-y_d,\zeta_{j,1})_{L^2(\Omega)}
	\]
where the function $\zeta_{j,1}$
is of the form
	\[
		\zeta_{j,1}=\pi^{-1/2}\xi_jr_j^{-\lambda_j}\sin(\lambda_j \theta_j)+z_{j,1},
	\]
	where $\xi_j$ denotes the cut-off function introduced at the beginning of Section \ref{sec:2} and the function $z_{j,1}$ denotes a function which solves
	\[
		-\Delta z_{j,1}=[\Delta,\xi_j]\pi^{-1/2}r_j^{-\lambda_j}\sin(\lambda_j \theta_j)\text{ in }\Omega,\quad z_{j,1}=0 \text{ on }\Gamma,
	\]
and $[a,b]=ab-ba$ denotes the commutator. According to Theorem \ref{main:cc}, we deduce the existence of an constant $h_0>0$ such that for all $h<0$ there holds
	\begin{align}
		\tilde c_{j,1}&:=(\bar y_h-y_d,\zeta_{j,1})_{L^2(\Omega)}=(\bar y-y_d,\zeta_{j,1})_{L^2(\Omega)}+(\bar y_h-\bar y,\zeta_{j,1})_{L^2(\Omega)}\notag\\
		&\geq (\bar y-y_d,\zeta_{j,1})_{L^2(\Omega)}-c\|\bar y_h-\bar y\|_{L^2(\Omega)}>0\label{eq:5.8.1}
	\end{align}
	due to the assumption $c_{j,1}=(\bar y-y_d,\zeta_{j,1})_{L^2(\Omega)}>0$. Using this result, we will show that the singular part of the function $\tilde \varphi$ which solves
	\[
		-\Delta \tilde \varphi = \bar y_h - y_d \text{ in }\Omega,\quad \tilde \varphi =0\text{ on }\Gamma,
	\]
	behaves like the singular part of $\bar \varphi$.
	Indeed, $\tilde \varphi$ admits the splitting
	\begin{equation}\label{eq:5.8.2}
		\tilde \varphi = \tilde \varphi_{\reg} + \tilde \varphi_{\sing}
	\end{equation}
	according to Lemma \ref{L3.1}. The regular part $\tilde \varphi_{\reg}$ belongs to $W^{2,q}(\Omega)$, at least for some $q>2$, since $\bar y_h-y_d$ belongs to $L^q(\Omega)$ due to the convergence result of Theorem \ref{main:cc}. The singular part $\tilde \varphi_{\sing}$ can be written as
	\[\tilde \varphi_{\sing}=
{\tilde c_{j,1}}\xi_j r_j^{\lambda_j}\sin(\lambda_j\theta_j),\]
	where the constant $\tilde c_{j,1}$ is greater than zero according to \eqref{eq:5.8.1}. Assuming that $|x_{\Gamma}^i-x_j|$ is already small enough such that $\xi_j\equiv 1$ on $\operatorname{supp} e_{i}$, we get by basic calculations
	\begin{align}
		\int_{\Gamma}(\nu \bar u_h -\partial_n^h\bar{\varphi}_h) e_{i}&=\int_{\Gamma}\nu \bar u_he_{i} + \int_{\Gamma}(\partial_n\tilde{\varphi} -\partial_n^h\bar{\varphi}_h) e_{i}\notag\\
&\qquad  - \int_{\Gamma}\partial_n\tilde{\varphi}_{\reg}e_{i}
-\int_{\Gamma}\partial_n\tilde{\varphi}_{\sing}e_{i}\notag\\
		&\geq -\int_{\Gamma}ce_{i} + \int_{\Gamma}(\partial_n\tilde{\varphi} -\partial_n^h\bar{\varphi}_h) e_{i}+\int_{\Gamma}c_{j,1}r_j^{\lambda_j-1}e_{i}\label{eq:5.8.3}
	\end{align}
	where we used that $\|\bar u_h\|_{L^\infty(\Gamma)}\leq \max\{|a|,|b|\}$ and that $\partial_n\tilde{\varphi}_{\reg}$ is uniformly bounded in $L^\infty(\Gamma)$ due to the embedding $W^{2,q}(\Omega)\hookrightarrow W^{1,\infty}(\Omega)$ for $q>2$. As before, let us denote by $\tilde S_h$ the operator which extends any function of $U_h$ to one in $Y_h$ by zero. Also observe that $\bar\varphi_h$ is the Ritz-projection $R_h \tilde\varphi$ of $\tilde\varphi$.
	Then integration by parts, the definition of $\partial_n^h\bar{\varphi}_h$ in \eqref{normal} and \eqref{eq:5.8.2} yield
	\begin{align}
		\int_{\Gamma}(&\partial_n\tilde{\varphi} -\partial_n^h\bar{\varphi}_h) e_{i}=\int_{\Omega}\nabla (\tilde\varphi - R_h\tilde\varphi)\cdot\nabla\tilde S_h e_{i}\notag\\
		&=\int_{\Omega}\nabla (\tilde\varphi_{\reg} - R_h\tilde\varphi_{\reg})\cdot\nabla\tilde S_h e_{i}+\int_{\Omega}\nabla (\tilde\varphi_{\sing} - R_h\tilde\varphi_{\sing})\cdot\nabla\tilde S_h e_{i}\notag\\
		&\leq \|\nabla (\tilde\varphi_{\reg} - R_h\tilde\varphi_{\reg})\|_{L^2(\Omega)}\|\nabla\tilde S_h e_{i}\|_{L^2(\Omega)}+\int_{\Omega}\nabla (\tilde\varphi_{\sing} - R_h\tilde\varphi_{\sing})\cdot\nabla\tilde S_h e_{i}\notag\\
		&\leq \int_{\Gamma}ce_{i}+\int_{\Omega}\nabla (\tilde\varphi_{\sing} - R_h\tilde\varphi_{\sing})\cdot\nabla\tilde S_h e_{i},\label{eq:5.8.4}
	\end{align}
	where we employed a standard discretization error estimate for the Ritz-projection, \eqref{eq:local1} and an inverse inequality in the last step.
	Now we proceed as in the proof of Lemma \ref{L4.3} between \eqref{eq:local8} and \eqref{eq:local14}. Let $\sigma$ and the subdomains $\Omega_J$ be defined as in that proof and let the index $J$ be chosen such that
$\operatorname{supp} \tilde S_he_{i}\subset\Omega_J$. Assume that $|x_\Gamma^i-x_j|\geq c_I h$ with a constant
$c_I$ large enough such that local $W^{1,\infty}$-error estimates for the Ritz-projection are applicable. Then those estimates of \cite[Corollary 1]{demlow2012} and \eqref{eq:local1} yield
	\begin{align}
		\int_{\Omega}&\nabla (\tilde\varphi_{\sing} - R_h\tilde\varphi_{\sing})\cdot\nabla\tilde S_h e_{i}\leq \|\nabla (\tilde\varphi_{\sing} - R_h\tilde\varphi_{\sing})\|_{L^\infty(\Omega)}\|\nabla\tilde S_h e_{i}\|_{L^1(\Omega)}\notag\\
		&\leq c\left(\|\nabla(\tilde\varphi_{\sing}-
I_h\tilde{\varphi}_{\sing})\|_{L^\infty(\Omega_J')}+
d_J^{-2}\|\tilde\varphi_{\sing}-R_h\tilde\varphi_{\sing}\|_{L^2(\Omega_J')
}\right)\|e_{i}\|_{L^1(\Gamma)}\notag\\
		&\leq cd_{J}^{\lambda_j-2}\left(d_{J}^{2-\lambda_j}\|\nabla(\tilde\varphi_{\sing}-I_h\tilde{\varphi}_{\sing})\|_{L^\infty(\Omega_J')}\right.\notag\\
&\qquad\qquad\quad\left.+\|\sigma^{-\lambda_j}(\tilde\varphi_{\sing}-R_h\tilde\varphi_{\sing})\|_{L^2(\Omega_J')}\right)\int_{\Gamma}e_{i}.\label{eq:5.8.5}
	\end{align}
	The second derivatives of the singular part behave like $r_j^{\lambda_j-2}$. Thus, by means of standard interpolation error estimates and the results of \cite[Corollary 3.62]{pfefferer2014}, we infer
	\begin{align}
	d_{J}^{2-\lambda_j}&\|\nabla(\tilde\varphi_{\sing}-I_h\tilde{\varphi}_{\sing})\|_{L^\infty(\Omega_J')}+\|\sigma^{-\lambda_j}(\tilde\varphi_{\sing}-R_h\tilde\varphi_{\sing})\|_{L^2(\Omega_J')}\notag\\
	&\leq c\left(h+h|\log h|^{1/2}\right)\leq c h|\log h|^{1/2}.\label{eq:5.8.6}
	\end{align}
	Combining \eqref{eq:5.8.3}--\eqref{eq:5.8.6}, we obtain	
	\begin{align*}
		\int_{\Gamma}(\nu \bar u_h -\partial_n^h\bar{\varphi}_h) e_{h,i}&\geq \int_{\Gamma}(c_{j,1}r_j^{\lambda_j-1}-c-ch|\log h|^{1/2}d_{J}^{\lambda_j-2})e_{i}\\
		&\geq (c_{j,1}d_J^{\lambda_j-1}-c-ch|\log h|^{1/2}d_{J}^{\lambda_j-2})\int_{\Gamma}e_{i}.
	\end{align*}
	Finally, we observe that $|x_{h,i}-x_j|\sim d_J$. Thus, we are able to choose constants $\tilde{\rho}_{1,j}$ and $\tilde{\rho}_{2,j}$ such that
	\[
		\frac{c_{j,1}}{2}d_j^{\lambda_j-1}-c>0
	\]
	if $|x_{\Gamma}^i-x_j|\leq \tilde{\rho}_{2,j}$ and
	\[
		\frac{c_{j,1}}{2}d_j^{\lambda_j-1}-ch|\log h|^{1/2}d_{J}^{\lambda_j-2}>0
	\]
    if $|x_{\Gamma}^i-x_j|\geq \tilde{\rho}_{1,j} h |\log h|^{1/2}$. This proves the assertion.
\end{proof}

\begin{remark}
	By using the Cauchy-Schwarz inequality, estimates for the Ritz-projection from \cite[Theorem 5.1]{bacuta2003}, \eqref{eq:local1} and an inverse inequality, we infer
	\begin{align*}
		\int_{\Omega}\nabla (\tilde\varphi_{\sing} - R_h\tilde\varphi_{\sing})\cdot\nabla\tilde S_h e_{i}&\leq \|\nabla (\tilde\varphi_{\sing} - R_h\tilde\varphi_{\sing})\|_{L^2(\Omega)}\|\nabla\tilde S_h e_{i}\|_{L^2(\Omega)}\leq ch^{\lambda_j}.
	\end{align*}
	However, this is not enough to show that the discrete optimal control admits one of the control bounds in the direct vicinity of the corner $x_j$, since then
	\[
		\int_{\Gamma}c_{j,1}r_j^{\lambda_j-1}e_{i}\geq c h^{\lambda_j}.
	\]
\end{remark}

Now, we redefine the sets $\tilde \Gamma$ and $\tilde \Omega$ by
\[
\tilde \Gamma:=\{x\in \Gamma:\ |x-x_j|<\tilde{\rho}_{2,j} \mbox{ if }
{j\in\mathbb{H}}\}\]
and\[\tilde \Omega:=\{x\in \Omega:\ |x-x_j|<\tilde{\rho}_{2,j} \mbox{ if }
{j\in\mathbb{H}}\},
\]
and we set again
\[
V_h := \{u_h\in U_h:\ u_h\equiv 0\mbox{ on }\tilde\Gamma\}.
\]
Moreover, let $\Gamma^c:=\Gamma\backslash\tilde \Gamma$.
We have the following modification for the general error estimate
\begin{theorem}\label{T5.11}For the solution of the continuous and the
	discrete optimal control problem we have
	\begin{align}
	\|\bar u-\bar u_h\|_{L^2(\Gamma^c)}+&\|\bar y-\bar y_h\|_{L^2(\Omega)}\le  c\Bigg(\| \bar u-\bar u_h\|_{L^2(\tilde\Gamma)}+\|\bar u- u_h^*\|_{L^2(\Gamma^c)}
\notag\\
	&\left. +\|\bar y-S_h\bar u\|_{L^2(\Omega)}+\sup_{\psi_h\in V_h}\frac{\left|(\nabla \bar\varphi,\nabla S_h\psi_h)_{L^2(\Omega)}\right|}{\|\psi_h\|_{L^2(\Gamma)}}\right).\label{eq:general_estimateg}
	\end{align}
\end{theorem}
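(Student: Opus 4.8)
The plan is to adapt the proof of Theorem \ref{T3.2}, this time carefully tracking which parts of the error live on $\tilde\Gamma$ versus $\Gamma^c$, and exploiting Lemma \ref{L5.9} to control the behaviour of $\bar u_h$ near the non-convex corners. The key observation is that, thanks to Lemma \ref{L5.9}, for $h$ small enough the discrete optimal control $\bar u_h$ coincides with $\bar u$ on the part of $\tilde\Gamma$ at distance at least $\tilde\rho_{1,j}h|\log h|^{1/2}$ from each corner $x_j$, $j\in\mathbb{H}$, while on the remaining (tiny) part $\bar u_h$ and $\bar u$ both take one of the control bounds, hence again agree there. Therefore $e_h := u_h^* - \bar u_h$, restricted to $\tilde\Gamma$, differs from $u_h^* - \bar u$ only on the $O(h|\log h|^{1/2})$-measure set near the corners, which contributes negligibly. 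Consequently we may split $e_h = e_h^{(0)} + e_h^{(1)}$ where $e_h^{(1)}$ is supported in $\tilde\Omega$ and has small $L^2(\Gamma)$-norm (controlled by $\|\bar u - u_h^*\|_{L^2(\tilde\Gamma)}$ plus corner remainder), while $e_h^{(0)}\in V_h$.

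First I would repeat verbatim the opening of the proof of Theorem \ref{T3.2}, obtaining \eqref{eq:generr1} but with $\|\bar u - \bar u_h\|_{L^2(\Gamma)}$ replaced by $\|\bar u - \bar u_h\|_{L^2(\Gamma^c)} + \|\bar u - \bar u_h\|_{L^2(\tilde\Gamma)}$, and then bounding the second of these by its appearance on the right-hand side directly (it is one of the allowed terms). Next, when handling the second term of \eqref{eq:generr2}, namely $\nu(\bar u - \bar u_h, e_h)_{L^2(\Gamma)}$, I would integrate by parts exactly as in \eqref{eq:generr3}; the inner-product identities \eqref{normal}, \eqref{E3.3}, \eqref{discharmonic} are unchanged because they are purely about the state/adjoint variables. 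The only modification is in the final step \eqref{eq:generr5}: the term $(\nabla\bar\varphi, \nabla S_h e_h)_{L^2(\Omega)}$ must now be estimated by writing $e_h = e_h^{(0)} + e_h^{(1)}$. For the $V_h$-part I use the supremum over $\psi_h\in V_h$ as in Theorem \ref{T5.2}. For the $e_h^{(1)}$-part, which is supported near the corners, I bound $(\nabla\bar\varphi,\nabla S_h e_h^{(1)})_{L^2(\Omega)}$ by Cauchy--Schwarz together with $\|\nabla S_h e_h^{(1)}\|_{L^2(\Omega)}\le c h^{-1/2}\|e_h^{(1)}\|_{L^2(\Gamma)}$ (from \eqref{eq:local1} and discrete harmonicity) and $\|\nabla\bar\varphi\|_{L^2(\tilde\Omega)}$, which is finite; the factor $h^{-1/2}$ is absorbed because $\|e_h^{(1)}\|_{L^2(\Gamma)} = O(h|\log h|^{1/2})$, giving an $O(h^{1/2}|\log h|^{1/2})$ contribution that can be merged into $\|\bar u - u_h^*\|_{L^2(\Gamma^c)}$ plus a harmless term. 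Alternatively, and more cleanly, I would simply absorb $\|e_h^{(1)}\|_{L^2(\Gamma)}$ into $\| \bar u-\bar u_h\|_{L^2(\tilde\Gamma)} + \|\bar u - u_h^*\|_{L^2(\tilde\Gamma)}$, the latter being part of $\|\bar u - u_h^*\|_{L^2(\Gamma^c)}$ only up to the corner strips — so an honest accounting uses both $\|\bar u-\bar u_h\|_{L^2(\tilde\Gamma)}$ and the full $\|\bar u - u_h^*\|_{L^2(\Gamma)}$; since $\|\bar u - u_h^*\|_{L^2(\tilde\Gamma)} \le \|\bar u - u_h^*\|_{L^2(\Gamma)}$ and the right-hand side already contains $\|\bar u - u_h^*\|_{L^2(\Gamma^c)}$, I would state the estimate with $\|\bar u - u_h^*\|_{L^2(\Gamma^c)}$ and note that the contribution of $u_h^*$ on $\tilde\Gamma$ is itself $O(h|\log h|^{1/2})$ by the interpolation-error estimates of Lemma \ref{L5.1}, hence subsumed. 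Concluding as in Theorem \ref{T3.2} via Young's inequality then gives \eqref{eq:general_estimateg}.

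The main obstacle will be the bookkeeping near the corners: one has to verify that the ``bad'' strips where $\bar u_h$ is not yet known (i.e.\ $|x-x_j|< \tilde\rho_{1,j}h|\log h|^{1/2}$) contribute only lower-order terms, and in particular that $\|S_h e_h^{(1)}\|_{H^1(\Omega)}$ is controlled without losing the gain from the small measure of the support. The clean way to do this is to keep $\| \bar u - \bar u_h\|_{L^2(\tilde\Gamma)}$ on the right-hand side (as the theorem does) and only use Lemma \ref{L5.9} to guarantee that $e_h$ agrees with $u_h^* - \bar u$ off a set of measure $O(h|\log h|^{1/2})$, so that $e_h^{(0)}\in V_h$ up to this negligible correction; the correction is then absorbed into the other right-hand-side terms using the $h^{-1/2}$-bound on $S_h$ together with boundedness of $\|\bar u_h\|_{L^\infty(\Gamma)}$ and the explicit interpolation estimates. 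No new analytic ingredient beyond those already in Theorem \ref{T3.2}, Theorem \ref{T5.2} and Lemma \ref{L5.9} is needed; the work is entirely in the careful splitting of $e_h$ and in tracking the measure of the corner strips.
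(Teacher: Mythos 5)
Your overall strategy---keep the intermediate error $e_h=u_h^*-\bar u_h$ from Theorem \ref{T3.2} and split it into a $V_h$-part and a corner part $e_h^{(1)}$---is genuinely different from the paper's, and as written it has a gap in the treatment of $e_h^{(1)}$. First, your opening claim that on the innermost strips $|x-x_j|<\tilde\rho_{1,j}h|\log h|^{1/2}$ the discrete control ``takes one of the control bounds, hence again agrees'' with $\bar u$ is unsupported: Lemma \ref{L5.9} says nothing about those nodes, and the Remark following it explains precisely why the available Ritz-projection estimates cannot establish activity of $\bar u_h$ in the direct vicinity of the corner. Second, your quantitative accounting is off: if $e_h^{(1)}$ is merely bounded and supported on a set of measure $O(h|\log h|^{1/2})$, then $\|e_h^{(1)}\|_{L^2(\Gamma)}=O(h^{1/2}|\log h|^{1/4})$, not $O(h|\log h|^{1/2})$. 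Pairing this with $\|\nabla S_he_h^{(1)}\|_{L^2(\Omega)}\le ch^{-1/2}\|e_h^{(1)}\|_{L^2(\Gamma)}$ and the \emph{fixed} constant $\|\nabla\bar\varphi\|_{L^2(\tilde\Omega)}$ gives only $O(|\log h|^{1/4})$, which does not vanish. To rescue this term you must first use Galerkin orthogonality, $(\nabla\bar\varphi,\nabla S_he_h^{(1)})=(\nabla(\bar\varphi-R_h\bar\varphi),\nabla\tilde S_he_h^{(1)})$, to gain a factor $h^{\lambda_j-\varepsilon}$ with $\lambda_j>\tfrac12$; you do this for the $V_h$-part but not here. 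Even then, since $u_h^*=\bar u$ on $\tilde\Gamma$, the clean bound is $\|e_h^{(1)}\|_{L^2(\Gamma)}\le\|\bar u-\bar u_h\|_{L^2(\tilde\Gamma)}$, and one should absorb the corner contribution into $c\|\bar u-\bar u_h\|_{L^2(\tilde\Gamma)}$ rather than invoke Lemma \ref{L5.9} inside the proof: the whole point of keeping $\|\bar u-\bar u_h\|_{L^2(\tilde\Gamma)}$ on the right-hand side is that Theorem \ref{T5.11} should be a purely structural inequality, with Lemma \ref{L5.9} applied only afterwards.

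The paper avoids all of this bookkeeping by changing the test functions rather than splitting the error: it tests the continuous and discrete variational inequalities with the hybrid admissible controls $\tilde u$ (equal to $\bar u$ on $\tilde\Gamma$ and to $\bar u_h$ on $\Gamma^c$) and $\tilde u_h$ (equal to $\bar u_h$ on $\tilde\Gamma$ and to $u_h^*$ on $\Gamma^c$). Then $e_h:=\tilde u_h-\bar u_h$ vanishes identically on $\tilde\Gamma$, so it lies in $V_h$ \emph{exactly}, and no decomposition or measure estimate is needed; the identity \eqref{eq:u_h} still applies because $u_h^*=\bar u$ (both equal to the active bound, by Lemma \ref{L2.5}) on $\tilde\Gamma$, and the only new term, $\|\bar u-\bar u_h\|_{L^2(\tilde\Gamma)}$, enters through the stability of $S_h$ applied to $\bar u-\tilde u_h$. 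I recommend you adopt that device; your splitting can be made to work for the application in Theorem \ref{T5.1}, but only after repairing the two points above, and it proves a weaker, $h$-dependent version of the stated inequality rather than Theorem \ref{T5.11} itself.
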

{Note that the first term on the left hand side of \eqref{eq:general_estimateg} is a norm with respect to $\Gamma^c$.}

\begin{proof}
	We proceed as in the proof of Theorem \ref{T3.2}. In contrast, we will test the optimality conditions with different functions. For that purpose, let us introduce $\tilde u \in U_{ad}$ and $\tilde u_h\in U^h_{ad}$ by
	\[
	\tilde u=\begin{cases}
	\bar u & \text{a.e. in } \tilde \Gamma\\
	\bar u_h &\text{a.e. in } \Gamma^c
	\end{cases}\quad\text{and}\quad \tilde u_h = \begin{cases}
	\bar u_h & \text{a.e. in } \tilde \Gamma\\
	u_h^* &\text{a.e. in } \Gamma^c
	\end{cases}.
	\]
	Note that $\bar u$ and $u_h^*$ are constant, even coincide, on $\tilde \Gamma$ and that $\bar u_h$ is equal to $\bar u$ at least for all $x_{h,i}$ with $|x_{h,i}-x_j|\leq[\tilde{\rho}_{1,j} h |\log h|^{1/2} ,\tilde{\rho}_{2,j}]$ for
$j\in \mathbb{H}$ according to Lemma \ref{L5.9}. Next, we define the intermediate error $e_h:=\tilde u_h-\bar u_h$, which is equal to zero in $\tilde{\Gamma}$. Then, we obtain
	\begin{align}
	\|\bar u-\bar u_h\|_{L^2(\Gamma^c)}+\|\bar y-\bar y_h\|_{L^2(\Omega)}&\leq \|\bar u- u_h^*\|_{L^2(\Gamma^c)}+\| e_h\|_{L^2(\Gamma)}\notag \\&+\|\bar y-S_h \tilde u_h\|_{L^2(\Omega)}+\|S_he_h\|_{L^2(\Omega)}.\label{eq:generr1g}
	\end{align}
	To deal with the third term, we take into account the continuity of $S_h$:
	\begin{align}
	\|\bar y-S_h \tilde u_h\|_{L^2(\Omega)}&\leq
	\|\bar y-S_h \bar u\|_{L^2(\Omega)}+\|S_h (\bar u- \tilde u_h)\|_{L^2(\Omega)}\notag\\
	&\leq \|\bar y-S_h \bar u\|_{L^2(\Omega)}+ c\| \bar u-\tilde u_h\|_{L^2(\Gamma)}\notag\\
	&\leq \|\bar y-S_h \bar u\|_{L^2(\Omega)}+ c(\| \bar u-\bar u_h\|_{L^2(\tilde\Gamma)}+\| \bar u-u_h^*\|_{L^2(\Gamma^c)}).\label{eq:stabilityg}\end{align}
	Accordingly, we only need estimates for the second and fourth terms in \eqref{eq:generr1}. We begin estimating the second one, but as we will see this also yields an estimate for the fourth term. There holds
	\begin{align}
	\nu\| e_h\|_{L^2(\Gamma)}^2&= \nu(u_h^*-\bar u,e_h)_{L^2(\Gamma^c)}+\nu(\bar u-\bar u_h,e_h)_{L^2(\Gamma)}.\label{eq:generr2g}
	\end{align}
	Next, we consider the second term of~\eqref{eq:generr2g} in detail.
    By adding the continuous and discrete variational inequality with $u=\tilde u$ and $u_h=\tilde u_h$, respectively, we deduce
	\[
	(\nu(\bar u_h-\bar u)+\partial_n\bar \varphi-\partial_n^h\bar\varphi_h,e_h)_{L^2(\Gamma)}+(\nu\bar u-\partial_n\bar\varphi,u_h^*-\bar u)_{L^2(\Gamma)}\geq 0.
	\]
	Rearranging terms and using~\eqref{eq:u_h} leads to
	\begin{equation*}
	\nu(\bar u-\bar u_h,e_h)_{L^2(\Gamma)}\leq (\partial_n\bar \varphi-\partial_n^h\bar\varphi_h,e_h)_{L^2(\Gamma)}.
	\end{equation*}
	Integration by parts (cf.~\cite[Theorem 3.1.1]{nevcas2012direct}),~\eqref{normal},~\eqref{E3.3} and~\eqref{discharmonic} yield
	\begin{align}
	\nu(\bar u-\bar u_h,e_h)_{L^2(\Gamma)}&\leq (\Delta \bar \varphi+(\bar y_h-y_\Omega),S_he_h)_{L^2(\Omega)}+(\nabla (\bar\varphi-\bar \varphi_h),\nabla S_he_h)_{L^2(\Omega)}\notag\\
	&= (\bar y_h -\bar y,S_he_h)_{L^2(\Omega)}+(\nabla \bar\varphi,\nabla S_he_h)_{L^2(\Omega)}\notag\\
	&= (S_h \tilde u_h -\bar y,S_he_h)_{L^2(\Omega)}-\|S_he_h\|_{L^2(\Omega)}^2+(\nabla \bar\varphi,\nabla S_he_h)_{L^2(\Omega)}\label{eq:generr3g}.
	\end{align}
	By collecting the estimates~\eqref{eq:generr2g} and~\eqref{eq:generr3g} we obtain
	\begin{align*}
	\nu\| e_h\|_{L^2(\Gamma)}^2&+\|S_he_h\|_{L^2(\Omega)}^2\notag\\
	\leq& \nu(u_h^*-\bar u,e_h)_{L^2(\Gamma^c)}+(S_h \tilde u_h -\bar y,S_he_h)_{L^2(\Omega)}+(\nabla \bar\varphi,\nabla S_he_h)_{L^2(\Omega)}\notag\\
	\leq &\nu\|u_h^*-\bar u\|_{L^2(\Gamma^c)}\|e_h\|_{L^2(\Gamma)}+\|S_h \tilde u_h -\bar y\|_{L^2(\Omega)}\|S_he_h\|_{L^2(\Omega)}\\
	&+\sup_{\psi_h\in V_h}\frac{\left|(\nabla \bar\varphi,\nabla S_h\psi_h)_{L^2(\Omega)}\right|}{\|\psi_h\|_{L^2(\Gamma)}}\|e_h\|_{L^2(\Gamma)}.
	\end{align*}
	From the Young inequality we can deduce
	\begin{align}
	\| e_h\|_{L^2(\Gamma)}&+\|S_he_h\|_{L^2(\Omega)}\notag\\
	&\leq c\left(\|u_h^*-\bar u\|_{L^2(\Gamma^c)}+\|S_h \tilde u_h -\bar y\|_{L^2(\Omega)}+\sup_{\psi_h\in V_h}\frac{\left|(\nabla \bar\varphi,\nabla S_h\psi_h)_{L^2(\Omega)}\right|}{\|\psi_h\|_{L^2(\Gamma)}}\right).\label{eq:generr4g}
	\end{align}
	Finally, the assertion is a consequence from~\eqref{eq:generr1g}, \eqref{eq:stabilityg} and~\eqref{eq:generr4g}.
\end{proof}
Finally, by observing that
\[
	\|\bar u-\bar u_h\|_{L^2(\tilde \Gamma)}\leq c h^{1/2} |\log h|^{1/4}
\]
according to Lemma \ref{L5.9} and the uniform boundedness of $\bar u$ in $L^\infty(\Gamma)$, we deduce the desired result of Theorem \ref{T5.1} by combining Theorem \ref{T5.11} and Lemmata \ref{L5.1} and \ref{L5.3}.

\section{Numerical experiments}\label{sec:num_exp}

The experiments have been performed with Matlab R2015a on an Intel$^\mathrm{(R)}$ Core$^\mathrm{(TM)}$ i7 CPU 870 @2.93 GHz with 16GB RAM on Windows 7 64 bits. All the scripts and functions have been programmed by us.

To build an example with exactly known solution $\bar u$, we just define $\bar\varphi\in H^1_0(\Omega)$ and compute $\bar u = \proj_{[a,b]}\left(\frac{1}{\nu}\partial_n\bar\varphi\right)$, $\bar y\in H^1(\Omega)$ such that $-\Delta\bar y = 0$ in $\Omega$, $\bar y = \bar u$ on $\Gamma$ and $y_\Omega = \bar y+\Delta\bar\varphi$. In general, it is not possible to compute $\bar y$ exactly, so we will use its finite element approximation to compute an approximation of $y_\Omega$.

Since the aim of the experiment is to measure the order of convergence of the $L^2(\Gamma)$ error in the control variable, we have solved the problems in two quasi-uniform families of $J$ nested meshes obtained by diadic refinement from a rough initial mesh. One of them is built such that it does not have the superconvergence property (see Figure \ref{qmesh}), while the other is obtained using regular refinement, which results in a $O(h^2)$-irregular family which has the superconvergence property (see Figure \ref{smesh}).
The finest mesh has between 1 million and 3.15 million nodes, depending on the geometry of the domain. Notice that these fine meshes induce boundary meshes that only have between 4 thousand and 7 thousand nodes only. To solve the optimization problem, we have used a semismooth Newton method; see \cite{Mateos2017} for the details.
\begin{figure}
  \centering
  \includegraphics[width=0.8\textwidth]{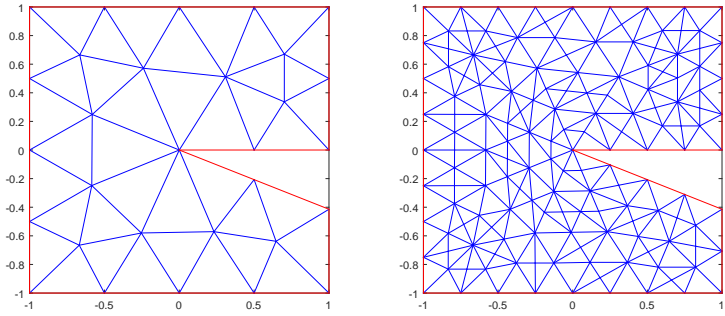}
  \caption{Family of quasi-uniform meshes which is not $O(h^2)$-irregular}\label{qmesh}
\end{figure}
\begin{figure}
  \centering
  \includegraphics[width=0.8\textwidth]{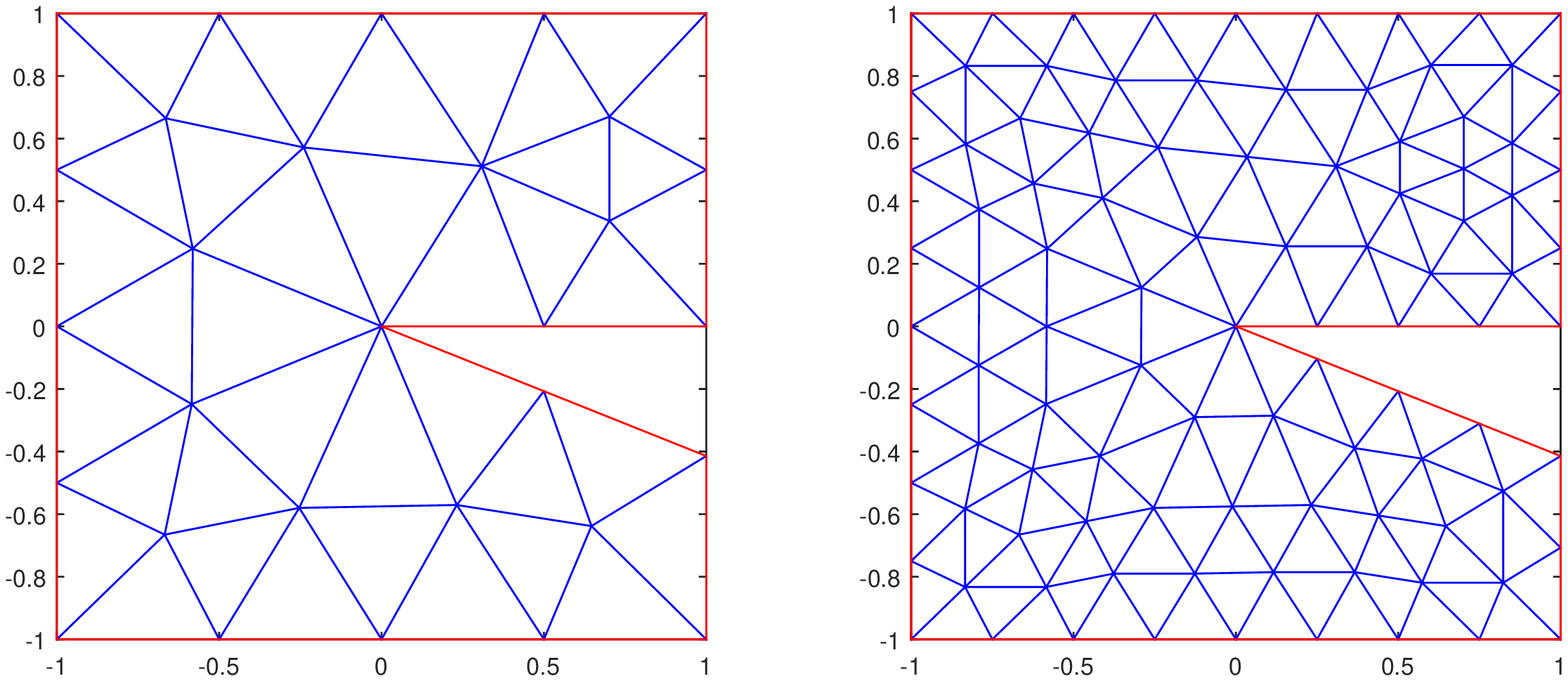}
  \caption{Family of quasi-uniform  $O(h^2)$-irregular meshes}\label{smesh}
\end{figure}

In the examples where the optimal control is continuous, we measure the error at the mesh at level $j=1,\dots,J$ as
\[e_j=\|\bar u_{h_j} -I_{h_j}\bar u\|_{L^2(\Gamma)}\]
where $\bar u_{h_j}$ is the solution of $(P_{h_j})$ and $I_{h_j}:C(\Gamma)\to U_{h_j}$ is the nodal Lagrange interpolation operator. If the exact solution is singular at the point $x_0=(0,0)$, we simply approximate $I_{h_j}\bar u (0,0)\approx \bar u(\epsilon,0)$, for some $\epsilon >0$ small enough.

Since we are using a dyadic refinement strategy, we have that $h_{j+1}=h_j/2$ and we can measure the Experimental Order of Convergence at level $j=2,\dots,J$ as
\[EOC_{j} = \log_2{e_{j-1}}-\log_2{e_{j}}.\]
It is to be expected that $EOC_j$ converges to the Theoretical Order of Convergence ($TOC$) as $j\to\infty$, so for every problem we report on
$EOC := EOC_{J}$ and compare with the corresponding $TOC$.

Let $(\rho,\theta)$ denote the usual polar coordinates in $\mathbb{R}^2$ and define $\Omega$ as the interior of the convex hull of the set of points $\{(0,0),(1,0),(\cos(\omega_1),\sin(\omega_1)\}$ if $\pi/3\leq \omega_1\leq\pi/2$ and $\Omega=\{(x_1,x_2)\in(-1,1)\times(-1,1):\ 0<\theta<\omega_1\}$ for $\pi/2<\omega_1<2\pi$. We will consider the following cases
\begin{enumerate}
  \item $\bar\varphi = r^{\lambda}\sin(\lambda\theta)( \sin(\omega_1)(x-1)+(1-\cos(\omega_1))y )$ if $\omega_1\leq \pi/2$,
  \item $\bar\varphi = r^{\lambda}\sin(\lambda\theta)(1-x_1)(1-x_2)$ if $\pi/2<\omega_1\leq 3\pi/4$,
  \item $\bar\varphi = r^{\lambda}\sin(\lambda\theta)(1-x_1^2)(1-x_2)$ if $3\pi/4<\omega_1\leq 5\pi/4$,
  \item $\bar\varphi = r^{\lambda}\sin(\lambda\theta)(1-x_1^2)(1-x_2^2)$ if $5\pi/4<\omega_1<2\pi$,
\end{enumerate}
where we have tested the value $\lambda = \lambda_1$ for $\pi/3\leq \omega_1<2\pi$, and the special case $\lambda = 2\lambda_1$ for $\pi<\omega_1<2\pi$.
Straightforward calculations show that $\Delta\bar\varphi\in H^t(\Omega)\cap W^{1,p^*}(\Omega)$ for all $t<\lambda-1$ and some $p^*>2$. Also $\partial_\nu\bar\varphi\in H^{s}(\Gamma)$ for all $s<\lambda-1/2$. Hence, for an unconstrained problem $\bar u \in H^{s}(\Gamma)$ for all $s<\lambda-1/2$, which implies that $\bar y\in H^{t}(\Omega)$ for all $t<\lambda_1$ and therefore $y_\Omega=\bar y +\Delta\bar\varphi\in  H^{t}(\Omega)$ for all $t<\lambda_1-1$.  If the problem is constrained, then $\bar u\in L^\infty(\Gamma)$ and therefore $\bar y\in W^{1,p^*}(\Omega)$ for all $2<p^*<p_D$ and $y_\Omega\in W^{1,p^*}(\Omega)$ for some $p^*>2$.

Notice that for the case $5\pi/4<\omega_1<3\pi/2$ we have that $\omega_6=2\pi-\omega_1\in (\pi/2,\pi)$ and for the case $7\pi/4<\omega_1<2\pi$ we have $\omega_7 = 5\pi/2-\omega_1\in (\pi/2,\pi)$, so when we choose $\lambda=\lambda_1$ in the definition of $\bar\varphi$ and solve a constrained problem, the leading singular exponent to be taken into account should be, respectively, $\lambda_6$ or $\lambda_7$. Nevertheless, the exact adjoint state has been chosen in such a way that in the first case $c_{6,m}=0$ and in the second case $c_{7,m}=0$ for $m=1,2,3$, so for this example we need not take this into account.

We fix $\nu = 1$. For constrained problems, we will consider $a=-1/\lambda_1$ and $b=1$. We choose $a$ so the asymptotic behavior of the error shows up for the mesh sizes used. If $|a|$ were two big, the problem would behave like an unconstrained one for our meshes; on the other hand, were $|a|$ too small, we would be approximating an optimal control very similar to a constant and the experimental orders of convergence would be too high for our meshes.

Graphs with the experimental results can be found in figures \ref{F:unc_exp} and \ref{F:con_exp}.
It is remarkable that
experimental results are quite in agreement with theoretical estimates.
\begin{figure}[h!]
\includegraphics[width = 0.4\textwidth]{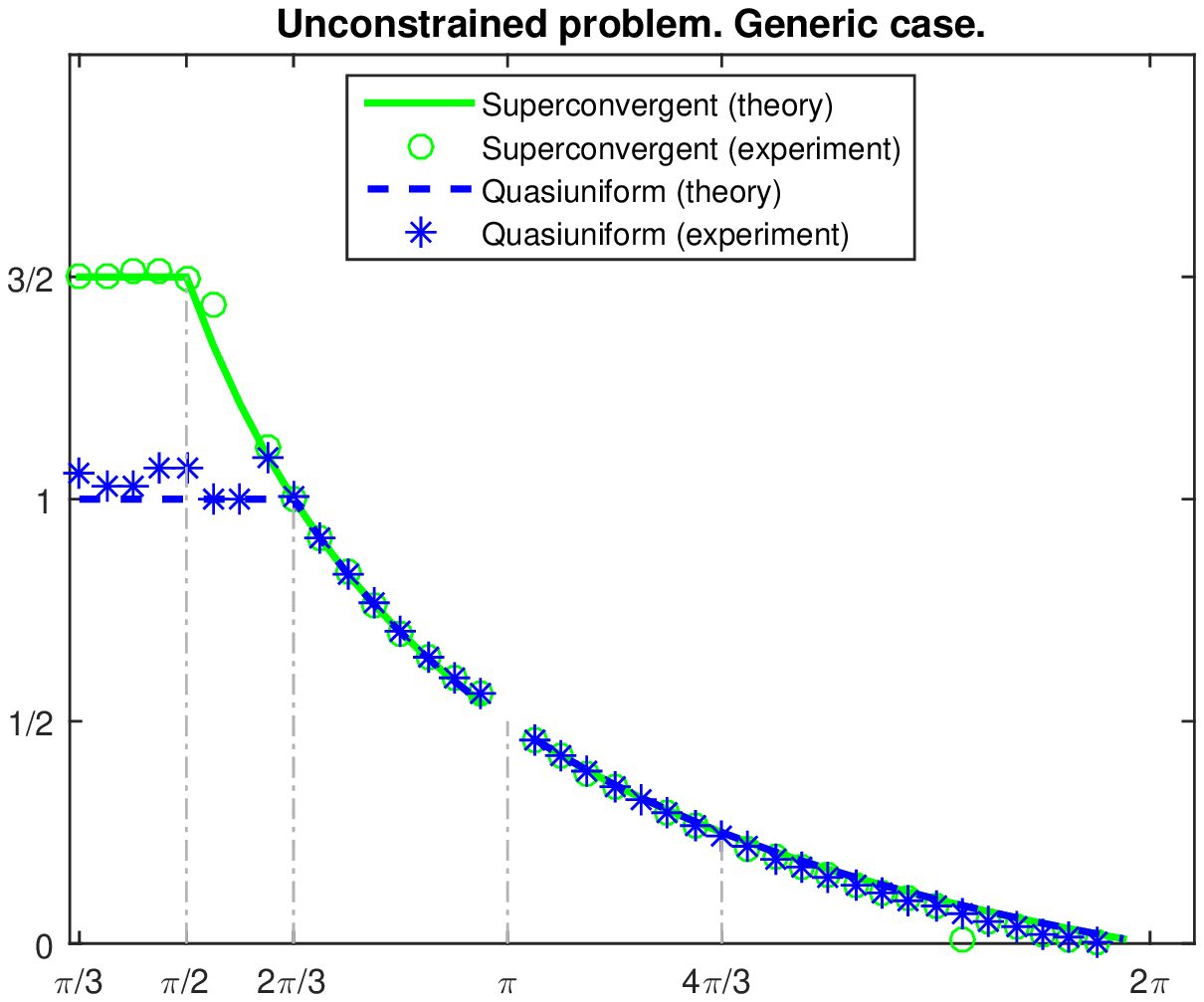}\hspace{0.1\textwidth}
\includegraphics[width = 0.4\textwidth]{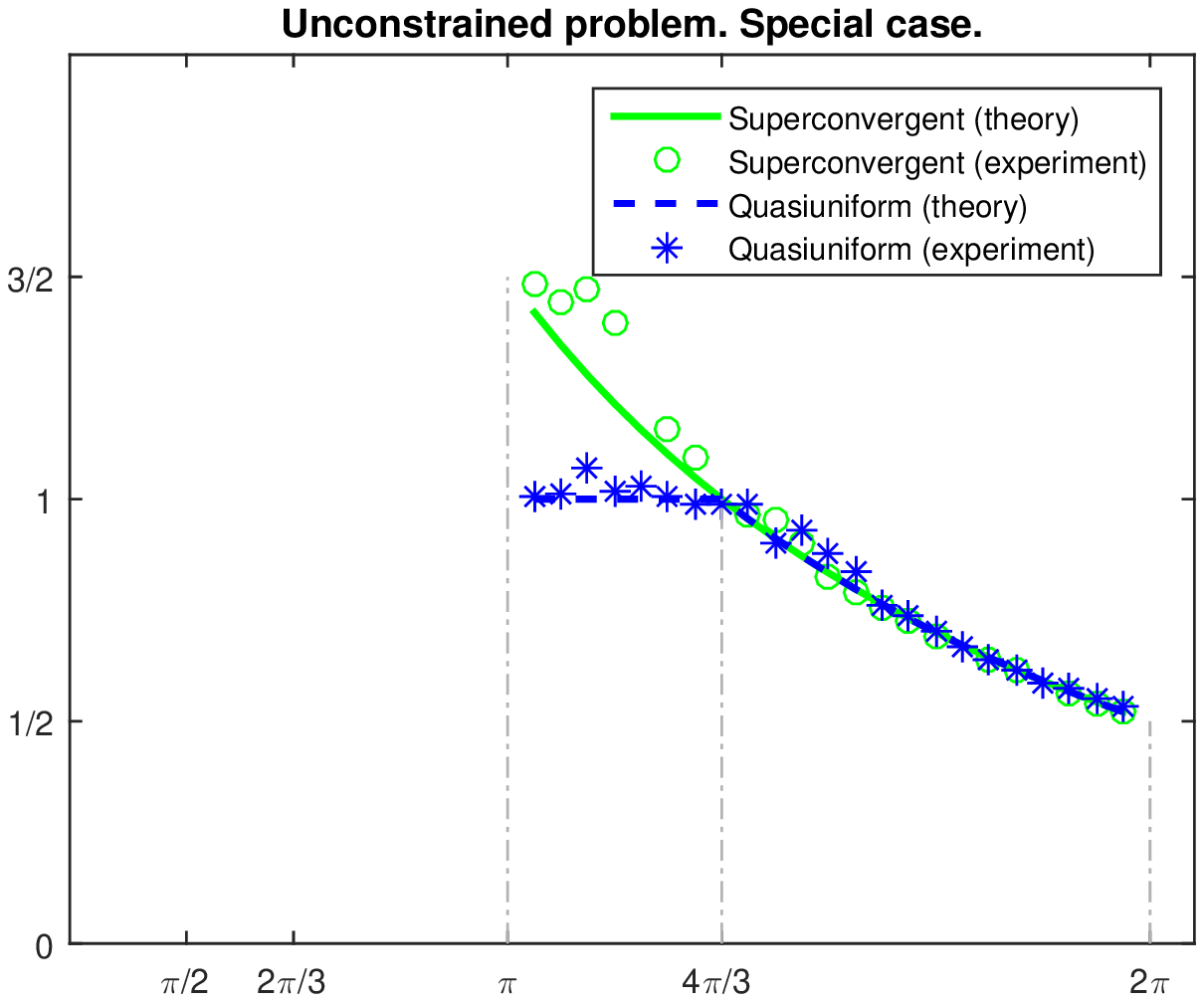}
\caption{Unconstrained problems. Experimental orders of convergence vs biggest angle. Left: generic case. Right: special case.}\label{F:unc_exp}
\end{figure}

\begin{figure}[h!]
\includegraphics[width = 0.4\textwidth]{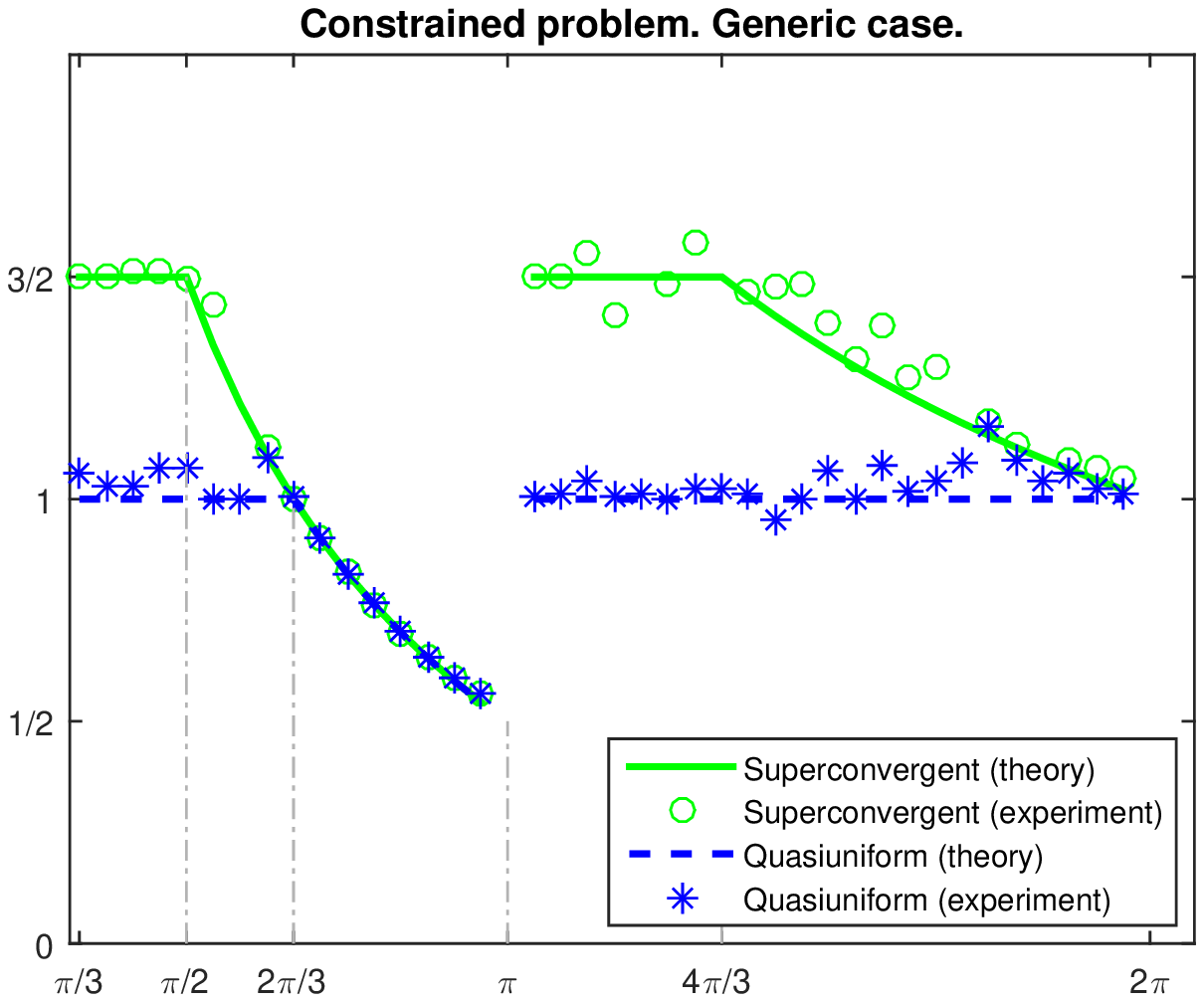}\hspace{0.1\textwidth}
\includegraphics[width = 0.4\textwidth]{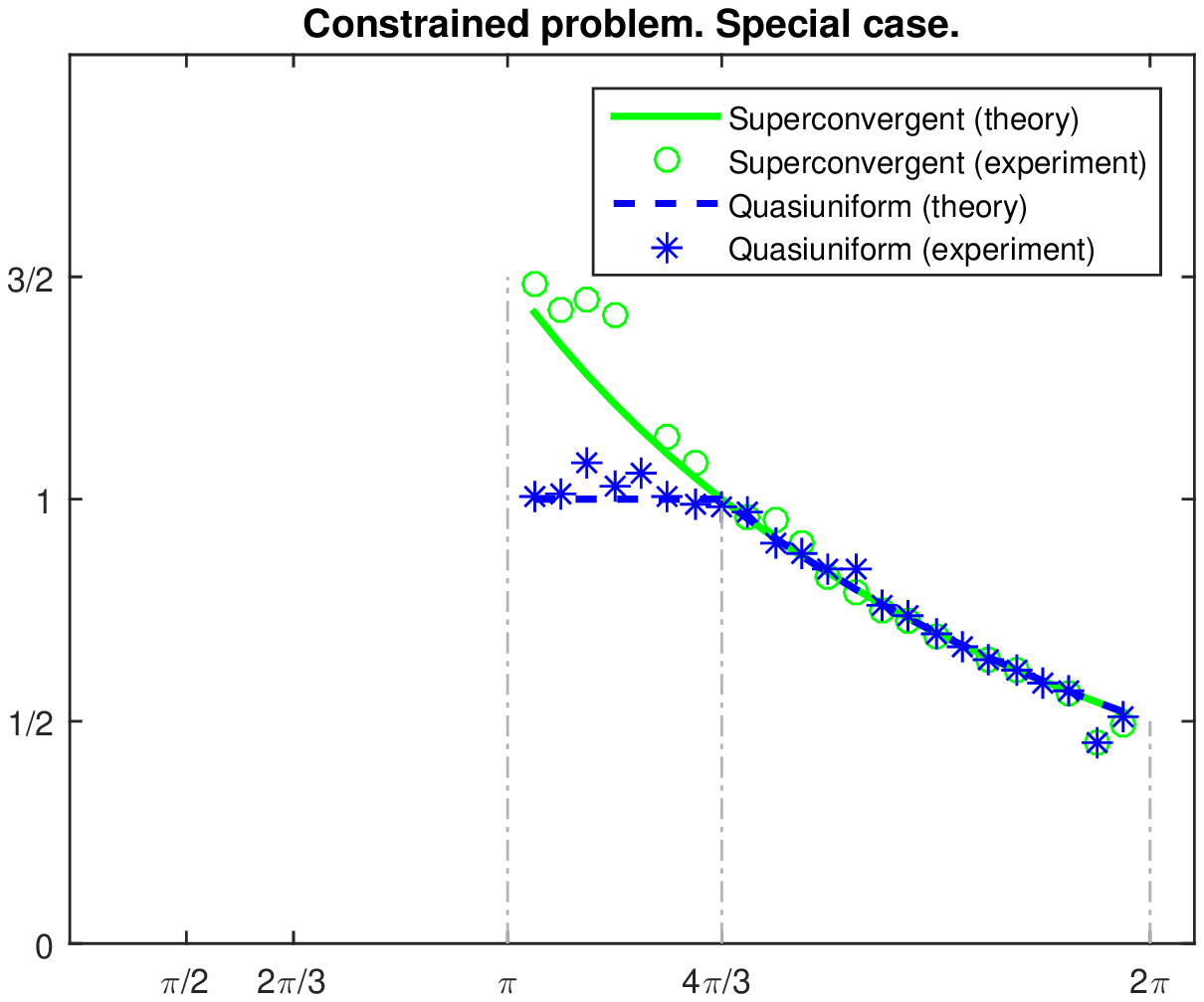}
\caption{Constrained problems. Experimental orders of convergence vs biggest angle. Left: generic case. Right: special case.}\label{F:con_exp}
\end{figure}

\bibliographystyle{AIMS}

\bibliography{references_for_constrained_Dirichlet}
\end{document}